\nonstopmode
\documentclass[10pt]{amsart}
\usepackage{latexsym}
\usepackage{fancyhdr}
\usepackage{amsmath, amssymb,amsthm}
\usepackage[all]{xy}
\usepackage{pdflscape}
\usepackage{longtable}
\usepackage{rotating}
\usepackage{verbatim}
\usepackage{hyperref}
\usepackage{cleveref}
\usepackage{subfigure}
\usepackage{mathrsfs}
\usepackage{mdwlist}
\usepackage{dsfont}
\usepackage{bbold}
\usepackage{mathtools}
\usepackage{booktabs}
\usepackage{float}
\usepackage{color}

\newcounter{mnotecount}[section]
\renewcommand{\themnotecount}{\thesection.\arabic{mnotecount}}
\newcommand{\mnote}[1]
{\protect{\stepcounter{mnotecount}}$^{\mbox{\footnotesize $
\bullet$\themnotecount}}$ \marginpar{\color{cyan}%
\raggedright\tiny $\!\!\!\!\!\!\,\bullet$\themnotecount~ #1} }

\newcommand{\rmnote}[1]{}
\newcommand{\adr}[1]{\mnote{{\bf armin:} #1}}


\overfullrule=1mm

\allowdisplaybreaks

\setcounter{tocdepth}{1}

\DeclareFontFamily{U}{mathb}{\hyphenchar\font45}
\DeclareFontShape{U}{mathb}{m}{n}{
      <5> <6> <7> <8> <9> <10> gen * mathb
      <10.95> mathb10 <12> <14.4> <17.28> <20.74> <24.88> mathb12
      }{}
\DeclareSymbolFont{mathb}{U}{mathb}{m}{n}
\DeclareFontSubstitution{U}{mathb}{m}{n}

\let\dot\relax
\DeclareMathAccent{\dot}{0}{mathb}{"39}
\let\ddot\relax
\DeclareMathAccent{\ddot}{0}{mathb}{"3A}
\let\dddot\relax
\DeclareMathAccent{\dddot}{0}{mathb}{"3B}
\let\ddddot\relax
\DeclareMathAccent{\ddddot}{0}{mathb}{"3C}

\theoremstyle{plain}
\newtheorem*{theorem*}{Theorem}
\newtheorem{theorem}{Theorem}[section]
\newtheorem*{lemma*}{Lemma}
\newtheorem{lemma}[theorem]{Lemma}
\newtheorem*{proposition*}{Proposition}
\newtheorem{proposition}[theorem]{Proposition}
\newtheorem*{corollary*}{Corollary}
\newtheorem{corollary}[theorem]{Corollary}
\newtheorem*{claim*}{Claim}
\newtheorem{claim}{Claim}

\newtheorem*{conjecture*}{Conjecture}

\newtheorem*{question*}{Question}
\newtheorem*{result*}{Result}
\newtheorem{result}[theorem]{Result}
\theoremstyle{definition}
\newtheorem*{definition*}{Definition}
\newtheorem{definition}[theorem]{Definition}
\newtheorem*{example*}{Example}
\newtheorem{example}[theorem]{Example}

\newtheorem*{algorithm*}{Algorithm}
\newtheorem*{remark*}{Remark}
\newtheorem*{remarks*}{Remarks}
\newtheorem{remark}[theorem]{Remark}

\newtheorem*{convention*}{Convention}


\numberwithin{equation}{section}

\sloppy

\def\al{\alpha}
\def\be{\beta}
\def\ga{\gamma}
\def\de{\delta}
\def\ep{\epsilon}

\def\la{\lambda}

\def\rh{\rho}

\def\si{\sigma}

\def\vh{\varphi}

\def\ps{\psi}
\def\om{\omega}

\def\Ga{\Gamma}

\def\Ps{\Psi}
\def\Om{\Omega}

\def\C{\mathbb{C}}

\def\N{\mathbb{N}}

\def\R{\mathbb{R}}

\def\cA{\mathcal{A}}

\def\cC{\mathcal{C}}

\def\cF{\mathcal{F}}

\def\cH{\mathcal{H}}

\def\sH{\mathscr{H}}

\def\sU{\mathscr{U}}

\def\p{\partial}

\def\<{\langle}
\def\>{\rangle}
\renewcommand{\o}{\circ}

\def\ol{\overline}
\def\interior{\on{int}}

\let\on=\operatorname

\newcommand{\sr}[1]%
{\ifmmode{}^\dagger\else${}^\dagger$\fi\ifvmode
\vbox to 0pt{\vss
 \hbox to 0pt{\hskip\hsize\hskip1em
 \vbox{\hsize3cm\raggedright\pretolerance10000
 \noindent #1\hfill}\hss}\vss}\else
 \vadjust{\vbox to0pt{\vss%
 \hbox to 0pt{\hskip\hsize\hskip1em%
 \vbox{\hsize3cm\raggedright\pretolerance10000%
 \noindent #1\hfill}\hss}\vss}}\fi%
}

\def\A{\;\forall}
\def\E{\;\exists}

\providecommand{\mapsfrom}{\kern.2em%
\setbox0=\hbox{$\leftarrow$\kern-.10em\rule[0.26mm]{0.1mm}{1.3mm}}\box0%
\kern.3em}

\title[Arc-smooth functions on closed sets]
{Arc-smooth functions on closed sets}

\author[A.~Rainer]{Armin Rainer}

\address{University of Education Lower Austria,
Campus Baden M\"uhlgasse 67, A-2500 Baden \&
Fakult\"at f\"ur Mathematik, Universit\"at Wien, 
Oskar-Morgenstern-Platz~1, A-1090 Wien, Austria}
\email{armin.rainer@univie.ac.at}

\begin{document}
	
\begin{abstract}
	By an influential theorem of Boman, a function $f$ on an open set $U$ in $\R^d$ is smooth ($\cC^\infty$) 
if and only if it is \emph{arc-smooth}, i.e., $f\o c$ is smooth for every smooth curve $c : \R \to U$. 
In this paper we investigate the validity of this result on closed sets. Our main focus is on sets 
which are the closure of their interior, so-called \emph{fat} sets. 
We obtain an analogue of Boman's theorem on fat closed sets with H\"older boundary 
and on fat closed subanalytic sets with the property that every 
boundary point has a basis of neighborhoods each of which intersects the interior in a 
connected set. 
If $X \subseteq \mathbb R^d$ is any such set and $f : X \to \R$ 
is arc-smooth, then $f$ extends to a smooth function defined on $\mathbb R^d$.
We also get a version of the Bochnak-Siciak theorem on all closed fat subanalytic and all closed 
sets  with H\"older boundary:
if $f : X \to \R$ is the restriction of a smooth function on $\R^d$ which is real analytic 
along all real analytic curves in $X$, 
then $f$ extends to a holomorphic function on a neighborhood of $X$ in $\mathbb C^d$.
Similar results hold for non-quasianalytic Denjoy-Carleman classes (of Roumieu type). 
We will also discuss sharpness and applications of these results.
\end{abstract}

\thanks{Supported by FWF-Project P~26735-N25}
\keywords{Differentiability on closed sets, ultradifferentiable functions, 
Boman's theorem, Bochnak--Siciak theorem, 
Fr\"olicher spaces, subanalytic sets}
\subjclass[2010]{ 
	26B05, 	    
	26B35,  	
	26E10,  	
	32B20,  	
	58C25}  	
\date{\today}

\maketitle

\tableofcontents

\section{Introduction}

In this paper we study differentiability of functions defined on closed subsets of $\R^d$. 
One way to endow an arbitrary set $X$ with a smooth structure is by declaring which curves $\R \to X$ and 
which functions $X \to \R$ should be \emph{smooth}. Together with a natural compatibility condition this leads to 
the notion of a \emph{Fr\"olicher space}; cf.\ \cite{FK88} and \cite{KM97}. 
Here we study the Fr\"olicher space generated by the inclusion of a closed set $X$ in $\R^d$ and some of its relatives. 
We will not use the terminology of Fr\"olicher spaces in the paper but the connection is made precise in 
\Cref{rem:Froelicher}.

\subsection{Boman's theorem and its relatives} \label{sec:intro}

Let $f : U \to \R$ be a function defined in an open subset $U$ of $\R^d$. 
Then $f$ induces a mapping $f_* : U^\R \to \R^\R$, $f_*(c) = f \o c$, whose invariance properties encode the 
regularity of $f$:

\begin{result}[Boman \cite{Boman67}] \label{result:1}
	A function $f : U \to \R$ is smooth ($\cC^\infty$) if and only if $f_* \cC^\infty(\R,U) \subseteq \cC^\infty(\R,\R)$.
\end{result}

Similarly, H\"older differentiability can be characterized by $f_*$; we denote by  
$\cC^{k,\al}$, for $k \in \N$, $\al \in (0,1]$, the class of $k$-times continuously differentiable 
	functions whose partial derivatives of order $k$ satisfy a local $\al$-H\"older condition.

\begin{result}[{\cite{FK88}, \cite{FF89}, \cite{KM97}}] \label{result:2}
	A function $f : U \to \R$ is of class $\cC^{k,\al}$ if and only if $f_* \cC^{\infty}(\R,U) \subseteq \cC^{k,\al}(\R,\R)$.
\end{result}

Furthermore, there is a ultradifferentiable version of Boman's theorem. 
We recall that, for a positive sequence $M=(M_k)_{k \in \N}$, 
the \emph{Denjoy--Carleman class} (of Roumieu type) $\cC^M(U, \R^m)$ is the set of 
all functions $f \in \cC^\infty(U,\R^m)$ such that for all compact $K \subseteq U$, 
\begin{equation} \label{DCcondition}
 	\E C,\rh >0  \A k \in \N \A x \in K : 
 	\|f^{(k)}(x)\|_{L_k(\R^d,\R^m)} \le C \rh^{k} k!\, M_{k}.  
\end{equation}
The sequence $M$ is called non-quasianalytic if $\cC^M$ contains non-trivial functions with compact support. 
If $M$ is log-convex, then $\cC^M$ is stable under composition. 
We refer to \Cref{sec:DenjoyCarleman} for this and more on Denjoy--Carleman classes.  

\begin{result}[\cite{KMRc}] \label{result:3}
	Assume that $M= (M_k)$ is non-quasianalytic and log-convex.	A function  
	$f : U \to \R$ is of class $\cC^{M}$ if and only if $f_* \cC^{M}(\R,U) \subseteq \cC^{M}(\R,\R)$.
\end{result}

\begin{remark} \label{rem:Boman}
Boman actually showed that $f$ is smooth if and only if $f_* \cC^M(\R,U) \subseteq \cC^\infty(\R,\R)$, 
for some arbitrary non-quasianalytic log-convex sequence $M$.	
\end{remark}

A glance at the proofs confirms that the curves along which the regularity in question is tested can be taken to 
have compact support.

A function $f : U \to \R$ with the property that $f \o c$ is real analytic ($\cC^\om$) for all real analytic $c : \R \to U$ 
clearly does not need to be real analytic on $U \subseteq \R^d$, let alone continuous, see \cite{BierstoneMilmanParusinski91}. 
But there is the following:

\begin{result}[{Bochnak, Siciak \cite{Bochnak70}, \cite{Siciak70}, \cite{BochnakSiciak71}}] \label{result:4}
	A function $f : U \to \R$ is real analytic if and only if 
	$f_* \cC^{\infty}(\R,U) \subseteq \cC^{\infty}(\R,\R)$ and $f_* \cC^{\om}(\R,U) \subseteq \cC^{\om}(\R,\R)$. 
\end{result}

Actually, a smooth function $f \in \cC^{\infty}(U)$ which is real analytic on affine lines is real analytic on $U$.

We remark that, if $M=(M_k)$ is quasianalytic such that $\cC^\om \subsetneq \cC^M$, 
then a $\cC^\infty$-function $f : U \to \R$ which satisfies 
$f_* \cC^{M}(\R,U) \subseteq \cC^{M}(\R,\R)$ need not be of class $\cC^M$; see \cite{Jaffe16}.

\subsection{Arc-smooth functions}

In this paper we investigate the validity of the above results on non-open subsets $X \subseteq \R^d$.
For arbitrary subsets $X\subseteq \R^d$ 
we define
\begin{align*}
	\cA^\infty(X) 
	&:= \big\{f : X \to \R : f_*\cC^\infty(\R,X) \subseteq \cC^\infty(\R,\R)\big\},
	\\
	\cA^M(X) &:= \big\{f : X \to \R : f_*\cC^M(\R,X) \subseteq \cC^M(\R,\R)\big\},
	\\
	\cA_M^\infty(X) &:= \big\{f : X \to \R : f_*\cC^M(\R,X) \subseteq \cC^\infty(\R,\R)\big\},
\end{align*}
where we set
\begin{align*}
	\cC^\infty(\R,X) &:= \big\{c \in \cC^\infty(\R,\R^d) : c(\R) \subseteq X\big\}, 
	\\
	\cC^M(\R,X) &:= \big\{c \in \cC^M(\R,\R^d) : c(\R) \subseteq X\big\}.	
\end{align*}
We call the elements of $\cA^\infty(X)$ \emph{arc-smooth functions} and those of $\cA^M(X)$ \emph{arc-$\cC^M$ functions} on $X$. 
We will also consider 
\begin{align*}
	\cA^\om(X) 
	&:= \big\{f \in \cA^\infty(X) : f_*\cC^\om(\R,X) \subseteq \cC^\om(\R,\R)\big\},
\end{align*}
where 
\[
	\cC^\om(\R,X) := \big\{c \in \cC^\om(\R,\R^d) : c(\R) \subseteq X\big\}.
\]
We will not speak of \emph{arc-analytic} functions, 
since such are not assumed to be smooth in the literature.

Evidently, $\cA^\om(X) \subseteq \cA^\infty(X) \subseteq \cA_M^\infty(X) \supseteq \cA^M(X)$. 
(We will see below that there is no hope for the analogue of \Cref{result:2} to hold on even very simple non-open sets 
like the closed half-space.)

With this notation, \Cref{result:1}, \Cref{result:3}, and \Cref{result:4} amount to 
\begin{equation} \label{open}
	\cA^{\infty}(X) = \cC^{\infty}(X),\quad \cA^{M}(X) = \cC^{M}(X),\quad \cA^{\om}(X) = \cC^{\om}(X),
\end{equation}
if $X \subseteq \R^d$ is a non-empty open set and $M=(M_k)$ is a non-quasianalytic log-convex sequence.

\begin{remark}
	The identities \eqref{open} imply that, in the definition of $\cA^\infty(X)$, $\cA^M(X)$, and $\cA^\om(X)$,  
	we could equivalently replace the families of curves $c  : \R \to X$ by 
	families of plots $p : U \to X$ (of the same regularity), where $U$ is any open subset of $\R^e$ with varying $e$. 
\end{remark}

\begin{remark} \label{rem:Froelicher}
	Recall that a \emph{Fr\"olicher space} is a triple $(X,\cC_X,\cF_X)$ consisting of a set $X$, a 
	subset $\cC_X \subseteq X^\R$ and a subset $\cF_X \subseteq \R^X$ such that 
	\begin{enumerate}
		\item $f : X \to \R$ belongs to $\cF_X$ if and only if $f \o c \in \cC^\infty(\R,\R)$ for all $c \in \cC_X$.
		\item $c : \R \to X$ belongs to $\cC_X$ if and only if $f \o c \in \cC^\infty(\R,\R)$ for all $f \in \cF_X$.  
	\end{enumerate}
	Any subset $\cF \subseteq \R^X$ generates a unique Fr\"olicher space $(X,\cC_X,\cF_X)$ by setting 
	\begin{align*}
	 	\cC_X &:= \big\{c : \R \to X : f \o c \in \cC^\infty(\R,\R) \text{ for all } f \in \cF\big\},
	 	\\
	 	\cF_X &:= \big\{f : X \to \R : f \o c \in \cC^\infty(\R,\R) \text{ for all } c \in \cC_X\big\}.
	 \end{align*} 
	 In this paper we are investigating the Fr\"olicher spaces generated by the inclusion map $\iota_X : X \to \R^d$ 
	 of subsets $X$ of $\R^d$, i.e., $(X,\cC^\infty(\R,X), \cA^\infty(X))$. 
	 For suitable sets $X$ we try to identify the corresponding set of functions $\cF_X = \cA^\infty(X)$.
	 More on Fr\"olicher spaces can be found in \cite{FK88} and \cite{KM97}. 
\end{remark}

\subsection{Admissible sets}
Let $X \subseteq \R^d$ be an arbitrary subset. 
A function $f : X \to \R$ is said to be \emph{smooth} if for each $x \in X$ there exist a neighborhood $U$ in $\R^d$ 
and a smooth function $F : U \to \R$ such that $F|_{U \cap X} = f|_{U \cap X}$. If $X$ is open, then this notion of 
smoothness coincides with the usual one. We denote by $\cC^\infty(X)$ the set of all smooth functions on $X$. 

\begin{definition} \label{def:admissible}
	A subset $X \subseteq \R^d$ is called \emph{$\cA^\infty$-admissible} if $\cA^\infty(X) = \cC^\infty(X)$, 
	i.e., the arc-smooth functions on $X$ are precisely the smooth functions.  
\end{definition}
 
Boman's theorem states that open subsets $X \subseteq \R^d$ are $\cA^\infty$-admissible. 
We will look for non-open $\cA^\infty$-admissible sets.
It follows from a result of Kriegl \cite{Kriegl97} that  
closed convex subsets $X \subseteq \R^d$ with non-empty interior are $\cA^\infty$-admissible. 
It is natural to consider closed sets with dense interior. 

\begin{definition}
	A non-empty closed subset $X$ of $\R^d$ is called \emph{fat} if $X = \ol {\interior(X)}$.	
\end{definition}

If $X \subseteq \R^d$ is fat, then there are other natural possibilities to define ``smooth'' functions on $X$ 
which we compare in the following lemma.

\begin{lemma} \label{lem:converse}
	Let $X \subseteq \R^d$ be a fat closed set. 
	Consider the following conditions:
	\begin{enumerate}
	 	\item There exists $F \in \cC^\infty(\R^d)$ such that $F|_X = f$.
	 	\item $f \in \cC^\infty(X)$.
	 	\item $f|_{\interior(X)} \in \cC^\infty(\interior(X))$ and the Fr\'echet derivatives $(f|_{\interior(X)})^{(n)}$ 
	 	of all orders have continuous extensions $f^{(n)} : X \to L_n(\R^d,\R)$. 
	 	\item $f|_{\interior(X)} \in \cC^\infty(\interior(X))$ and the directional derivatives $d_v^n f|_{\interior(X)}$ 
	 	for all $v \in \R^d$ and all $n \in \N$ have continuous extensions to $X$.
	 	\item $f|_{\interior(X)} \in \cC^\infty(\interior(X))$ and the partial derivatives $\p^\al f|_{\interior(X)}$ 
	 	for all $\al \in \N^d$ have continuous extensions to $X$.
	 \end{enumerate} 
	 Then $(1) \Rightarrow (2) \Rightarrow (3) \Leftrightarrow (4) \Leftrightarrow (5)$. 
	 All five conditions are equivalent if $X$ has the following regularity property:
	\begin{enumerate}
			\item[(6)]
			For all $x \in X$ there exist $m \in \N_{>0}$, $C>0$, and a compact neighborhood $K$ of $x$ in $X$ such that 
		any two points $y_1,y_2 \in K$ can be joined by a rectifiable path $\ga$ which lies in $\interior(X)$ except perhaps for finitely many points and has length
		\begin{equation*}
		 	\ell(\ga) \le C\, |y_1 - y_2|^{1/m}.
		 \end{equation*} 
		\end{enumerate}
\end{lemma}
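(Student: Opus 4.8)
The implication $(1)\Rightarrow(2)$ is immediate from the definition of smoothness on $X$, since the restriction of a global smooth function provides the required local extensions. For $(2)\Rightarrow(3)$, fix $x\in X$ and a local smooth extension $F$ on a neighborhood $U$; since $X$ is fat, $U\cap\interior(X)$ is dense in $U\cap X$, so $F^{(n)}|_{U\cap\interior(X)}$ agrees with $(f|_{\interior(X)})^{(n)}$ there, and $F^{(n)}|_{U\cap X}$ is the desired continuous extension. (One should note that the extension is well-defined independently of the chosen $F$, again by density of the interior.) The equivalences $(3)\Leftrightarrow(4)\Leftrightarrow(5)$ are essentially linear algebra combined with continuity: the Fréchet derivative $f^{(n)}(x)\in L_n(\R^d,\R)$ is a symmetric $n$-linear form, and knowing it is equivalent to knowing all directional derivatives $d_v^n f$ (by polarization, $d_v^n f = f^{(n)}(\cdot)(v,\dots,v)$ and conversely $f^{(n)}$ is recovered by polarizing), which in turn is equivalent to knowing all partial derivatives $\partial^\alpha f$ (expand $v=\sum v_i e_i$ multilinearly). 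One checks that continuous extendability of one family is equivalent to that of the others because these linear-algebraic relations are given by fixed finite linear combinations with constant coefficients, hence preserve continuity and limits; here fatness is used once more to ensure the extensions, where they exist, are unique and mutually compatible.

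The substantive part is showing that $(6)$ forces $(3)\Rightarrow(1)$, i.e., that the continuous extensions $f^{(n)}:X\to L_n(\R^d,\R)$ can be realized by a single global smooth function. The natural strategy is to verify that the family $(f^{(n)})_{n\in\N}$ constitutes a \emph{Whitney jet} on the closed set $X$ and then invoke the Whitney extension theorem. Concretely, I would set $J^n_x f := \sum_{k\le n} \tfrac{1}{k!} f^{(k)}(x)$ (viewed as a polynomial via the identification of symmetric multilinear forms with homogeneous polynomials) and must show that the Taylor remainder satisfies, for each compact $L\subseteq X$ and each $n$,
\[
	\big|f^{(n)}(y) - (J^{n}_x f)^{(n)}(y)\big| = o\big(|x-y|^{?}\big)\quad\text{as }|x-y|\to 0,\ x,y\in L,
\]
with the usual Whitney exponent. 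The point of $(6)$ is precisely to transfer the estimate — which holds trivially on the interior where $f$ is genuinely smooth, by the integral form of Taylor's theorem along straight segments — to boundary points: given $y_1,y_2\in K$, connect them by the path $\gamma\subseteq\interior(X)$ (away from finitely many points, which is harmless for integration) of length $\le C|y_1-y_2|^{1/m}$, integrate $d f^{(n)}$ along $\gamma$, and use uniform continuity of $f^{(n+1)}$ on the compact neighborhood to bound the increment of $f^{(n)}$ by (modulus of continuity)$\,\times\,\ell(\gamma)$. Combining this with the interior Taylor estimate and the path-length bound yields the Whitney condition, possibly with a modified but still valid exponent.

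The main obstacle is making this remainder estimate uniform and getting the exponents to match the Whitney extension hypotheses: the path length is only Hölder-$\tfrac1m$ in $|y_1-y_2|$, so a single application of the fundamental theorem of calculus to $f^{(n)}$ loses a factor $|y_1-y_2|^{1/m}$ relative to what one would get on an open set, and one must check this loss is absorbed when comparing successive jets $J^n$ and $J^{n+1}$ rather than fatally degrading the order. The resolution is that Whitney's theorem only requires the remainder $R^n_x(y) := f^{(n)}(y)-(J^n_xf)^{(n)}(y)$ to be $o(|x-y|^{0})=o(1)$ uniformly for a $\cC^\infty$ (not $\cC^{n}$) extension — i.e., it suffices that for every $n$ and every $\varepsilon>0$ there is $\delta$ with $|R^n_x(y)|\le\varepsilon|x-y|$ for $|x-y|<\delta$, $x,y\in L$ — so one runs an induction on $n$ in which the Hölder loss from $(6)$ is compensated by the already-established continuity (hence local uniform smallness) of the next derivative. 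I would carry this out by induction on $n$, the base case $n=0$ being continuity of $f$ plus the path estimate, and the inductive step combining the interior estimate on a segment inside $\interior(X)$ near a generic point with one boundary correction via $(6)$; once the jet is verified, Whitney extension produces the $F\in\cC^\infty(\R^d)$ of $(1)$, closing the cycle.
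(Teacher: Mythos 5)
Your treatment of $(1)\Rightarrow(2)\Rightarrow(3)\Leftrightarrow(4)\Leftrightarrow(5)$ is correct and matches the paper, which disposes of these implications in one line each (citing \cite[Lemma 7.13]{KM97} for the linear-algebra conversions). Your overall strategy for $(5)\Rightarrow(1)$ --- show that the extended derivatives form a Whitney jet and invoke Whitney's extension theorem --- is also exactly the paper's route; the paper simply cites \cite[Proposition 2.16]{Bierstone80a} for the Whitney-jet verification, which you attempt to reprove.

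It is in that verification that there is a genuine gap. Your claimed reduction of the $\cC^\infty$ Whitney conditions to the top-order statements ``$R^n_x(y)=o(1)$'' (or ``$\le\ve|x-y|$'') is not a correct form of Whitney's hypotheses: for a $\cC^\infty$ field one needs $(R^p_xf)^\al(y)=o(|x-y|^{p-|\al|})$ for \emph{all} $|\al|\le p$, and the top-order conditions alone do not imply the lower-order ones on a general compact set. More importantly, the one-step induction you propose (comparing $J^n$ with $J^{n+1}$ and absorbing the H\"older loss by continuity of the next derivative) fails already at order $p=1$ when $m>1$: integrating $f'$ along the path from $(6)$ gives only $|f(y)-f(x)-f'(x)(y-x)|\le\om_{f'}(\ell(\ga))\,\ell(\ga)=o(|x-y|^{1/m})$, which is \emph{not} $o(|x-y|)$. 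The missing idea is to compare $R^p_x$ with the Taylor remainder of the much higher order $q:=m(p+1)-1$: the path estimate gives $|(R^q_xf)^\al(y)|\lesssim\sup_K|f^{(q+1)}|\cdot\ell(\ga)^{q+1-|\al|}\lesssim|x-y|^{(q+1-|\al|)/m}\le|x-y|^{p+1-|\al|}$, while $(R^p_xf)^\al-(R^q_xf)^\al$ is a sum of terms $\frac{f^{(\al+\be)}(x)}{\be!}(y-x)^\be$ with $|\be|>p-|\al|$, hence $O(|x-y|^{p+1-|\al|})$ by boundedness of the derivatives on $K$. This $m$-fold jump in the order of the Taylor polynomial, not a one-step induction, is what absorbs the exponent $1/m$; it is precisely the argument of \cite[Proposition 2.16]{Bierstone80a} and is carried out explicitly in the ultradifferentiable setting in \Cref{lem:extension} of the paper. (A further small point to address: the path of $(6)$ meets $\p X$ in finitely many points, so the integration of $f^{(q+1)}$ along it must be justified by splitting the path there and using the continuity of the extended derivatives up to the boundary.)
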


\begin{proof}
	$(1) \Rightarrow (2) \Rightarrow (3)$ are obvious.

	$(3) \Leftrightarrow (4) \Leftrightarrow (5)$ 
	This follows from the fact that at points $x \in \interior(X)$ 
	Fr\'echet, directional, and partial derivatives can be converted into one another 
	in a linear way; cf.\ \cite[Lemma 7.13]{KM97}.

	$(5) \Rightarrow (1)$ By the regularity property (6), $f$ defines a Whitney jet on $X$, 
	see \cite[Proposition 2.16]{Bierstone80a}. 
	So Whitney's extension theorem implies (1).
\end{proof}

In general the implication $(5) \Rightarrow (1)$ is false, see \Cref{ex:complementflatcusp}.

Another natural condition for $\cA^\infty$-admissibility is the following; see \Cref{ex:simple}.

\begin{definition}
	\label{def:simple}
	A closed subset $X \subseteq \R^d$ is called \emph{simple} if each $x \in X$ has a basis of neighborhoods 
	$\sU$ such that $U \cap \interior(X)$ is connected for all $U \in \sU$. 
\end{definition}

A function $f : X \to \R$ is said to be \emph{real analytic} if for each $x \in X$ there exist a neighborhood $U$ 
of $x$ in $\C^d$ and a holomorphic function $F : U \to \C$ such that $F|_{U \cap X} = f|_{U \cap X}$. 
We denote by $\cC^\om(X)$ the set of all real analytic functions on $X$.  

If $M=(M_k)$ is a positive sequence, we set
\begin{equation*}
	\cC^M(X) := \big\{f \in \cC^\infty(X) :  \eqref{DCcondition} \text{ holds for all compact } K \subseteq X \big\}.
\end{equation*} 
Note that we do not require that a function $f \in \cC^M(X)$ is locally a restriction of a $\cC^M$-function on $\R^d$.
We shall discuss in \Cref{sec:extension} when a function in $\cC^M(X)$ extends to a $\cC^M$-function on $\R^d$.

\begin{definition}
	A subset $X \subseteq \R^d$ is called \emph{$\cA^\om$-admissible} 
(resp.\ \emph{$\cA^M$-admissible}) if $\cA^\om(X) = \cC^\om(X)$ (resp.\ $\cA^M(X) = \cC^M(X)$).
\end{definition}

By the Bochnak--Siciak theorem \ref{result:4} and \Cref{result:3}, all open subsets $X \subseteq \R^d$ are 
$\cA^\om$-admissible and 
$\cA^M$-admissible, for each log-convex non-quasianalytic $M$.

\subsection{Main results}

Our results can be arranged in groups with respect to two criteria: 
\emph{regularity of the functions} (smooth, real analytic, ultradifferentiable) 
and \emph{regularity of the domains} (H\"older sets, fat subanalytic sets).

By a \emph{H\"older set} we mean the \emph{closure} of an open set which has the uniform cusp property of index $\al$ 
for some $0<\al\le 1$. If $\al =1$ we speak of a \emph{Lipschitz set}. 
The collection of all H\"older sets in $\R^d$ is denoted by $\sH(\R^d)$.
(We use the term H\"older \emph{set} instead of \emph{domain}, 
since the latter is usually reserved for open sets.)
For precise definitions we refer to \Cref{sec:domains}.

\subsubsection*{The smooth case}

\begin{theorem} \label{main:1}
Every $X\in \sH(\R^d)$ 
is $\cA^\infty$-admissible.
We even have 
\begin{equation} \label{eq:smooth}
	\cA_M^\infty(X) =	\cA^\infty(X) = \cC^\infty(X),
\end{equation}  
for any non-quasianalytic log-convex positive sequence $M=(M_k)$.
\end{theorem}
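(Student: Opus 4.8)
The plan is to prove \eqref{eq:smooth} by establishing the chain of inclusions $\cC^\infty(X) \subseteq \cA^\infty(X) \subseteq \cA_M^\infty(X) \subseteq \cC^\infty(X)$; the first two are trivial (the first by the chain rule for composition of smooth maps, the second because $\cC^M \subseteq \cC^\infty$), so the entire content lies in the final inclusion $\cA_M^\infty(X) \subseteq \cC^\infty(X)$. So fix $f \in \cA_M^\infty(X)$; we must produce a smooth extension of $f$ to $\R^d$, or at least verify condition (5) of \Cref{lem:converse} together with the regularity property (6), since a H\"older set satisfies (6) with exponent $1/m = \al$ by the uniform cusp property (each cusp gives a path inside the interior of controlled length between nearby boundary points). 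Thus it suffices to show that $f|_{\interior(X)}$ is smooth and that all its partial derivatives extend continuously to $X$.

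First I would handle the interior. For $x \in \interior(X)$, every smooth (indeed every $\cC^M$) curve in $\R^d$ that stays near $x$ lies in $X$, so $f$ restricted to a small ball around $x$ lies in $\cA_M^\infty$ of that ball, which by \Cref{result:3} (or rather the version in \Cref{rem:Boman}, i.e.\ Boman's original statement that testing with $\cC^M$-curves detects smoothness) equals $\cC^\infty$ of the ball. Hence $f|_{\interior(X)}$ is smooth. The crux is the boundary behavior: given a boundary point $x_0 \in X$, I want to show $\p^\al f$ extends continuously across $x_0$ for every multi-index $\al$. The idea is to use curves that approach $x_0$ through the interior. Concretely, the uniform cusp property provides, near $x_0$, a cone-like or cusp-like region contained (up to translation along the boundary) in $\interior(X)$; one can build $\cC^M$-curves — with compact support, as the remark after \Cref{result:3} notes is permissible — that run inside $\interior(X)$ and limit onto $x_0$, and whose geometry one controls via the H\"older exponent $\al$. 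Since $f \circ c$ is smooth for all such $c$, one extracts uniform bounds and Cauchy-type estimates on the derivatives $\p^\al f$ along these curves.

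The main obstacle — and where I expect the real work to be — is the following: knowing $f \circ c \in \cC^\infty$ for individual curves $c$ gives, a priori, only pointwise-in-$c$ information, and one must upgrade this to a \emph{uniform} statement that forces $\p^\al f$ to have a genuine continuous extension to $X$ (not merely a limit along each curve separately). The standard mechanism for this is a \emph{uniform boundedness / closed graph} argument in the spirit of Kriegl--Michor: one considers the space of smooth (or $\cC^M$) curves into $X$ as a convenient vector space object, shows that $f_*$ lands in $\cC^\infty(\R,\R)$, and invokes that a linear map into a convenient space is bounded (hence the relevant jets are locally bounded) as soon as it is bounded after composing with all curves. Combined with a special ``space-filling'' family of curves adapted to the cusp geometry — curves whose images sweep out a full neighborhood of $x_0$ in $\interior(X)$, with derivatives up to order $k$ controlled by $|y_1-y_2|^{1/m}$ — this yields that the jet $(\p^\al f)_{|\al|\le k}$ is bounded near $x_0$ and satisfies the Whitney compatibility (Taylor remainder) estimates with the H\"older modulus coming from (6). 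Once $f$ is shown to be a Whitney jet on $X$ in the sense of \cite{Bierstone80a}, Whitney's extension theorem gives $F \in \cC^\infty(\R^d)$ with $F|_X = f$, completing the proof; the non-quasianalytic log-convex hypothesis on $M$ enters only to guarantee (via \Cref{result:3} / \Cref{rem:Boman} and the existence of $\cC^M$ bump functions) that the $\cC^M$-curves are plentiful enough both to detect smoothness in the interior and to realize the cusp-approaching families at the boundary.
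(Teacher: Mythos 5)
Your skeleton is the right one — the two easy inclusions, smoothness on $\interior(X)$ via Boman/\Cref{rem:Boman}, reduction to \Cref{lem:converse} plus the regularity property (6) (which for H\"older sets is \Cref{prop:alpharegular}), and then Whitney extension — and you have correctly located the crux at the continuous extension of the derivatives to $\p X$. But precisely there your proposal has a genuine gap. First, before one can speak of a ``continuous extension'' of $\p^\al f$ one needs a \emph{candidate value} at each boundary point, and you never construct one. The paper's mechanism for this is concrete and is the heart of its proof: for $x$ near $\p X$ and $v=(v',v_d)$ in the cusp $\Ga$, the polynomial curve $c_{x,v}(t)=x+(t^{2q}v',t^{2p}v_d)$ with $p/q\le\al$ and $q/p\notin\N$ lies in $\interior(X)$ for $t\ne0$ and hits $x$ at $t=0$; by the Fa\`a di Bruno computation of \Cref{lem:Faa}, the Taylor coefficients of $f\o c_{x,v}$ of orders $2p$ and $2q$ at $0$ \emph{isolate} $f'(x)(v)$, which defines the extension for $v\in\Ga$, and then for all $v$ by linearity since $\Ga$ is open. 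The needed uniformity in $x$ comes not from a functional-analytic principle but from applying Boman's theorem to the jointly smooth two-parameter plot $(s,t)\mapsto c_{x(s),v}(t)$, which shows $s\mapsto f'(x(s))(v)$ is smooth along every smooth curve $x(\cdot)$ in $X$; induction on the order then gives all $f^{(n)}$, and \Cref{topology} (the $c^\infty$-topology of $X$ equals the trace topology) converts arc-continuity into genuine continuity.

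Second, the mechanism you do propose in place of this does not work as stated. The convenient-calculus uniform boundedness principle you invoke applies to \emph{linear} maps into convenient spaces, and neither $f$ nor $f_*:c\mapsto f\o c$ is linear, so there is no closed-graph argument to appeal to here. The special-curve technique that the paper does use elsewhere (threading a $\cC^M$-curve through a fast-converging sequence where derivatives would blow up, as in the proofs of \Cref{main:2} and \Cref{main:4}) yields only \emph{boundedness} of already-defined interior derivatives, not the existence of a well-defined limit at the boundary, and even that presupposes the candidate values your sketch lacks. So while your plan correctly predicts the endgame (Whitney jet on $X$, then Whitney extension, with the non-quasianalyticity of $M$ entering only through \Cref{rem:Boman}), the step from ``$f\o c$ is smooth for every curve'' to ``the derivatives of $f$ extend continuously to $\p X$'' is asserted rather than proved, and the tool you name for it is not the right one.
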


\Cref{main:1} is proved in \Cref{sec:mainA}.

\begin{theorem} \label{main:4}
Every simple fat closed subanalytic set $X \subseteq \R^d$ is 
$\cA^\infty$-admissible.
\end{theorem}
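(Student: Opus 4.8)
The plan is to reduce the statement to a local assertion near each boundary point and then to exploit the subanalytic structure together with the "simple" hypothesis. Fix $X \subseteq \R^d$ a simple fat closed subanalytic set and $f \in \cA^\infty(X)$. Since the conclusion ``$f \in \cC^\infty(X)$'' is local, it suffices to work in a small neighborhood of an arbitrary point $x_0 \in \partial X$ (interior points are handled by Boman's theorem \ref{result:1}). First I would record the soft facts: $f|_{\interior(X)}$ is arc-smooth on the open set $\interior(X)$, hence smooth there by Boman, so all partial derivatives $\p^\al f$ exist on $\interior(X)$; the remaining task is to show these extend continuously to $X$, after which \Cref{lem:converse} (the implication $(5)\Rightarrow(1)$, using the curve-selection-type regularity property (6), which holds for subanalytic fat sets by \L ojasiewicz-type estimates) gives a global smooth extension to $\R^d$. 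Thus the real content is a \emph{continuity at the boundary} statement for each $\p^\al f$.

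The key tool is the subanalytic curve selection lemma and, more importantly, the fact that on a subanalytic set one can join nearby points of $\interior(X)$ by short smooth arcs whose length is controlled by a power of the distance — a \L ojasiewicz inequality for the geodesic-type distance inside $\interior(X)$. Concretely: because $X$ is subanalytic, near $x_0$ there is $m \in \N$ and $C>0$ so that any two points $y_1, y_2 \in \interior(X)$ close to $x_0$ can be joined by a smooth path in $\interior(X)$ of length $\le C|y_1-y_2|^{1/m}$ (this is where property (6) of \Cref{lem:converse} comes from; the subanalytic category is exactly what makes such estimates available, cf. Bierstone--Milman and Pawłucki). The ``simple'' hypothesis guarantees that these joining arcs can be chosen inside a single connected component of $U \cap \interior(X)$ for a basis of neighborhoods $U$ of $x_0$, so there is no obstruction coming from $\interior(X)$ being disconnected near $x_0$ (which is precisely the phenomenon that kills admissibility for, e.g., the complement of a flat cusp).

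With such arcs in hand, I would argue as follows. Suppose, for contradiction, that some $\p^\al f$ does not extend continuously to $x_0$; choosing $\al$ of minimal order $|\al|$ with this property, all lower-order derivatives already extend continuously, so $f$ is $\cC^{|\alpha|-1}$ near $x_0$ on $X$ with a prescribed Taylor polynomial at each boundary point. Take sequences $y_j, z_j \in \interior(X)$ converging to $x_0$ with $|\p^\al f(y_j) - \p^\al f(z_j)| \ge \delta > 0$. Join consecutive terms by the short subanalytic arcs above and assemble them (after a smooth reparametrization that flattens them to constant speed near the endpoints, in the style of the special curve lemma of \cite{KM97}) into a \emph{single} smooth curve $c : \R \to X$ passing through all the $y_j$ and $z_j$; the length control $\ell \le C|y_1-y_2|^{1/m}$ is exactly what makes the infinite concatenation $\cC^\infty$ at the limit. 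Then $f \o c$ is smooth by hypothesis, so $(f\o c)^{(|\alpha|)}$ is continuous; but evaluating the chain rule at the parameters mapping to $y_j$ and $z_j$, and using that all derivatives of $f$ of order $<|\alpha|$ are already continuous at $x_0$ while the velocities/accelerations of $c$ are under control, forces $\p^\al f(y_j)$ and $\p^\al f(z_j)$ to have the same limit — contradicting $\delta>0$. Hence every $\p^\al f$ extends continuously, condition $(5)$ of \Cref{lem:converse} holds, and $(5)\Rightarrow(1)$ (valid because subanalytic fat sets satisfy (6)) yields the desired smooth extension, so $f \in \cC^\infty(X)$ and $X$ is $\cA^\infty$-admissible.

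The main obstacle I expect is the arc-assembly step: one must produce a \emph{genuinely smooth} curve $\R \to X$ (not merely piecewise smooth) that realizes the test, with all derivatives controlled, and one must verify that the chain-rule bookkeeping at order $|\alpha|$ really does detect a jump in $\p^\al f$. The subanalytic length estimate is doing the heavy lifting here — it both supplies property (6) for \Cref{lem:converse} and guarantees the concatenated curve extends smoothly across the accumulation parameter — and the ``simple'' hypothesis is precisely what lets us keep the whole construction inside one connected piece of $\interior(X)$, which is essential since $f$ need only be smooth, not analytically rigid, on $\interior(X)$.
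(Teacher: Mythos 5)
Your overall architecture (reduce to continuity of the derivatives at the boundary, then invoke \Cref{lem:converse} together with the $m$-regularity of fat subanalytic sets, with the simpleness hypothesis keeping the joining paths in one connected piece of $\interior(X)$) matches the paper, and your identification of where ``simple'' enters is correct. But the central step of your argument --- assembling the short subanalytic arcs through $y_j$ and $z_j$ into a single smooth curve $c$ and reading off a contradiction from the continuity of $(f\o c)^{(|\al|)}$ --- has a genuine gap. First, the paths supplied by \Cref{thm:subanreg} and \Cref{lem:quasiconvex} are only rectifiable with a length bound; a length bound gives no control on higher derivatives of any parametrization, so the infinite concatenation cannot be made $\cC^\infty$ at the accumulation parameter by the special curve lemma, which requires fast (faster than any polynomial) convergence of the \emph{points}, not of arcs. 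Second, and more fundamentally, even granting a smooth curve through the $y_j,z_j$: smoothness of $c$ at the accumulation parameter forces $c^{(k)}(s_j)\to 0$, so by Fa\`a di Bruno the top-order contribution to $(f\o c)^{(|\al|)}(s_j)$ is $f^{(|\al|)}(y_j)\bigl(c'(s_j)^{\otimes|\al|}\bigr)$ with $c'(s_j)\to 0$. Convergence of this contraction is perfectly compatible with $\p^\al f(y_j)$ oscillating or even blowing up, unless you already have an a priori bound on $f^{(|\al|)}$ (or on $f^{(|\al|+1)}$, to run the fundamental theorem of calculus along the short arcs). Your proposal never establishes such a bound, and it is exactly the hard point. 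There is also the secondary issue that a single curve only sees derivatives in the direction of $c'$, so isolating an individual $\p^\al f$ requires a whole family of curves with linearly independent direction data.

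The paper supplies the two missing ingredients in a specific order. It first uses the uniform polynomial cuspidality of bounded fat subanalytic sets (\Cref{thm:PP}, Paw{\l}ucki--Ple\'sniak) to build, at each boundary point $x$, a map $\Ps_{x,v}(t)=h_x(t_1)+t_2v_2+\cdots+t_dv_d$ whose domain closure is a H\"older set; pulling $f$ back and invoking the already-proved H\"older case (\Cref{main:1}) yields \emph{existence} of boundary limits of all directional derivatives along the cusps $H_x$. It then proves \emph{boundedness} of $f^{(p)}$ on $X$ by a separate contradiction argument using the rectilinearization maps of \Cref{cor:PP} and a smooth curve through a rapidly converging subsequence --- this is where a blow-up of $\|f^{(p)}\|$ is actually detected, via a two-parameter smooth plot rather than a single curve. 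Only after boundedness is in hand does the quasiconvexity estimate of \Cref{lem:quasiconvex} (which is where the L-regular decomposition and the simpleness hypothesis are used) combine with the fundamental theorem of calculus to give continuity. Your plan collapses these two stages into one curve-based contradiction, and without the boundedness step the contradiction does not materialize.
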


This is proved in \Cref{sec:subanalytic}. 
The proof is based on the L-regular decomposition of subanalytic sets 
and the fact that fat closed subanalytic sets are uniformly polynomially 
cuspidal. It uses the result for H\"older sets, i.e., \Cref{main:1}. 

\begin{remark}
 H\"older sets $X \in \sH(\R^d)$ and fat closed subanalytic subsets $X \subseteq \R^d$ satisfy 
 \Cref{lem:converse}(6) and hence all items (1)--(5) in \Cref{lem:converse} are equivalent; cf.\ 
 \Cref{prop:alpharegular} and
	\Cref{thm:subanreg}. 	
\end{remark}

Notice that the assumption that $X$ is simple is necessary, see \Cref{ex:simple}.
H\"older sets are always simple, see \Cref{alpha-simple}.

\subsubsection*{The real analytic case}

\begin{theorem} \label{main:6}
Let $X \subseteq \R^d$ be a fat closed subanalytic set. 
Let $f \in \cC^\infty(X)$ 
		be real analytic on real analytic curves in $X$. 
		Then $f$ extends to a holomorphic function defined on an open neighborhood
		of $X$ in $\C^d$.
\end{theorem}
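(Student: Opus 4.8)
The plan is to reduce \Cref{main:6} to a local statement near boundary points and then combine the subanalytic L-regular decomposition with the H\"older-set results already available. First I would note that on a neighborhood of any interior point $x \in \interior(X)$ there is nothing to prove: the hypothesis that $f$ is real analytic along real analytic curves, together with the Bochnak--Siciak theorem (\Cref{result:4}) applied on the open set $\interior(X)$ --- in fact its affine-line version --- shows that $f|_{\interior(X)}$ is real analytic, hence extends holomorphically to a neighborhood of $\interior(X)$ in $\C^d$. So the real content is at boundary points $x \in \partial X$, and the task is to produce, for each such $x$, a polydisc $P \subseteq \C^d$ centered at $x$ and a holomorphic $F$ on $P$ with $F|_{P \cap X} = f$; a standard sheaf-theoretic gluing argument (uniqueness of holomorphic extensions on the connected components of $\interior(X)$, which hold because $X$ is fat so $\interior(X)$ is dense) then patches these local extensions, shrinking if necessary, into a holomorphic function on one open neighborhood of $X$.

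For the local problem at $x \in \partial X$ I would invoke the uniform polynomial cuspidality of fat closed subanalytic sets (the same structural input used for \Cref{main:4}, see the remark after that theorem and \Cref{thm:subanreg}): there is a compact neighborhood $K$ of $x$ in $X$, an integer $m$, and a real analytic (indeed polynomial) family of arcs that reach every point of $K$ from fixed interior base points with controlled speed. The key step is then to show that $f$, restricted near $x$, is the trace of a \emph{holomorphic} function. Here I would use that $f \in \cC^\infty(X)$ already, so by \Cref{lem:converse} (items (1)--(5) are equivalent for subanalytic $X$) $f$ is the restriction of some $G \in \cC^\infty(\R^d)$; composing $G$ with the polynomial cusp arcs and using that $f$ is real analytic along real analytic curves in $X$ forces the Taylor coefficients of $G$ at $x$ to satisfy Cauchy-type estimates. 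Concretely, the real analyticity of $t \mapsto f(c(t)) = G(c(t))$ for the cuspidal arcs $c$, uniformly over the family, should bound the derivatives $\|G^{(k)}(x)\|$ by $C\rho^k k!$ after absorbing the cusp index $m$ into $\rho$; this is the ultradifferentiable-regularity argument in the spirit of \Cref{result:4} transplanted to the cuspidal setting, and it yields that the formal Taylor series of $G$ at $x$ converges, defining a holomorphic germ $F$ at $x$ in $\C^d$.

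Finally I would check $F|_{P\cap X} = f$ on a small polydisc $P$: on $P \cap \interior(X)$ this follows because both $F$ and the (already established) real-analytic extension of $f|_{\interior(X)}$ are holomorphic with the same germ at points accumulating from the interior, and on $\partial X \cap P$ it follows by continuity of $f$ together with density of $\interior(X)$. The main obstacle, and where the argument needs care, is the uniformity in the cuspidal-arc estimate: one must control the radius of convergence of $t \mapsto G(c_y(t))$ \emph{uniformly} as the endpoint $y$ ranges over $K$ and in particular as $y \to x$, where the arcs degenerate; this is exactly the place where the subanalyticity (polynomiality of the cusp maps, finiteness of the L-regular stratification) is essential, since it provides the uniform polynomial bounds on the arc data that a merely H\"older boundary would not. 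I expect this to go through by the same bookkeeping as in the smooth case (\Cref{main:1} and \Cref{main:4}), now carried out with real-analytic rather than $\cC^\infty$ estimates, so that the composite $G \o c_y$ lies in a fixed Denjoy--Carleman-type class independent of $y$, forcing analyticity of $G$ near $x$.
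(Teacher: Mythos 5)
There is a genuine gap at the heart of your local step. You propose to derive the convergence of the Taylor series of $f$ at a boundary point $x$ from estimates $|(f\o c_y)^{(k)}(0)|\le C\rho^k k!$ along the cuspidal arcs, "absorbing the cusp index $m$ into $\rho$". That absorption is not possible: if an arc has the form $c(t)=x+t^m\tilde c(t)$, then by Fa\`a di Bruno the derivative $(f\o c)^{(k)}(0)$ sees $f^{(j)}(x)$ only for $j\approx k/m$, so a factorial bound $C\rho^k k!$ on the composite translates into a bound of order $\rho'^j (j!)^m$ on $f^{(j)}(x)$ --- a Gevrey estimate, not an analytic one. This is exactly the loss-of-regularity phenomenon quantified in \Cref{main:2} (where $\cA^M\subseteq\cC^{M^{(2)}}$ because of the $t^2$ reparameterizations), and it does not disappear in the analytic category. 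Moreover, the hypothesis that $f\o c$ is real analytic for each real analytic curve $c$ provides constants $C,\rho$ depending on $c$; you have no uniformity over the family of arcs $c_y$, and you correctly identify this as the "main obstacle" but then only "expect it to go through". It does not: no derivative-estimate argument of this kind is carried out in the paper. Instead, the paper's proof takes a different and essentially algebraic route: it applies the uniformization theorem to write $X=\vh(M)$ with $\vh$ proper analytic, uses the classical Bochnak--Siciak theorem on the manifold $M$ to conclude that $f\o\vh$ is analytic, and then invokes the Eakin--Harris/Gabrielov theorem that $\vh^*$ is \emph{strongly injective} on formal power series rings at a point where $\vh$ has generic rank $d$: convergence of the composed series $F_x\o\hat\vh$ forces convergence of $F_x$ itself. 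This is the ingredient your proposal is missing, and without it (or a substitute) the convergence of $F_x$ at boundary points is not established.

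A secondary, smaller issue: the gluing is not "standard sheaf-theoretic" as stated. A connected component of $U_x\cap U_y$ for two boundary points may fail to meet $\interior(X)$ at all, in which case the identity theorem gives you nothing and the local extensions need not agree there. The paper handles this with \Cref{lem:shrink}, shrinking each real slice $U_x^\R$ so that it contains the segments $[z,a]$ to nearest points $a\in X$; this guarantees every component of an intersection meets $\interior(X)$, after which the identity theorem applies. You would need to incorporate an argument of this type rather than appeal to density of $\interior(X)$ alone.
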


The proof of \Cref{main:6} (in \Cref{sec:BS}) is based on the uniformization theorem of subanalytic sets and 
a result of Eakin and Harris \cite{EakinHarris77} (proved earlier by Gabrielov \cite{Gabrielov73}). 
The following consequence will also be proved in \Cref{sec:BS}.

\begin{corollary} \label{main:7}
Let $X \subseteq \R^d$ be a closed set such that for all $z \in \p X$ there 
is a closed fat subanalytic set $X_z$ such that $z \in X_z \subseteq X$. 
Let $f \in \cC^\infty(X)$ 
		be real analytic on real analytic curves in $X$. 
		Then $f$ extends to a holomorphic function defined on an open neighborhood
		of $X$ in $\C^d$.
\end{corollary}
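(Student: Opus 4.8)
The plan is to reduce \Cref{main:7} to \Cref{main:6} by a localization argument, using the fact that holomorphic extensions produced on the pieces $X_z$ can be glued along overlaps because they are forced to agree on the interior of $X$, which is dense near $\partial X$. First I would fix $f \in \cC^\infty(X)$ that is real analytic along real analytic curves in $X$. The goal is a holomorphic $F$ defined on some open $\Omega \supseteq X$ in $\C^d$ with $F|_X = f$. On $\interior(X)$ this is already easy: $f|_{\interior(X)}$ is smooth and real analytic along real analytic curves lying in the open set $\interior(X)$, so by the classical Bochnak--Siciak theorem (\Cref{result:4}) it is real analytic on $\interior(X)$, hence extends to a holomorphic function $F_0$ on some open neighborhood $\Omega_0 \supseteq \interior(X)$ in $\C^d$. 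The real work is to extend across the boundary points $z \in \partial X$.

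So fix $z \in \partial X$ and let $X_z$ be the given closed fat subanalytic set with $z \in X_z \subseteq X$. Restricting $f$ to $X_z$, I would note that $f|_{X_z} \in \cC^\infty(X_z)$ (smoothness on $X_z$ in the sense of \Cref{lem:converse} follows from smoothness on $X$ by restricting the local extending functions) and that $f|_{X_z}$ is real analytic along every real analytic curve in $X_z$, since such curves are in particular real analytic curves in $X$. Hence \Cref{main:6} applies: $f|_{X_z}$ extends to a holomorphic function $F_z$ on an open neighborhood $\Omega_z$ of $X_z$ in $\C^d$; in particular $z \in \Omega_z$. Doing this for every $z \in \partial X$ and setting $\Omega := \Omega_0 \cup \bigcup_{z \in \partial X} \Omega_z$, I obtain an open set in $\C^d$ containing all of $X$ together with a collection of local holomorphic functions $\{F_0\} \cup \{F_z\}_{z \in \partial X}$, each agreeing with $f$ on the real points of its domain that lie in $X$.

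The step I expect to be the main obstacle is gluing these local pieces into a single well-defined holomorphic $F$ on $\Omega$. The key point is that $X_z$ is \emph{fat}, so $\interior(X_z) \ne \emptyset$ and in fact $X_z = \ol{\interior(X_z)}$, and $\interior(X_z) \subseteq \interior(X)$. Therefore on the real open set $\interior(X_z)$ both $F_z$ and $F_0$ restrict to $f$, so $F_z = F_0$ on $\interior(X_z)$, hence (by the identity theorem for holomorphic functions, since $\interior(X_z)$ is a nonempty open subset of $\R^d$ and thus not contained in any proper complex-analytic subset of a connected neighborhood) $F_z = F_0$ on each connected component of $\Omega_z \cap \Omega_0$ meeting $\interior(X_z)$. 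One then shrinks each $\Omega_z$ to the union of those components of $\Omega_z \cap \Omega_0$ that meet $\interior(X_z)$, together with a small ball around $z$; concretely, since $z \in \ol{\interior(X_z)}$, every neighborhood of $z$ meets $\interior(X_z) \subseteq \interior(X) \subseteq \Omega_0$, so after shrinking $\Omega_z$ to a small enough ball $B_z$ around $z$ we have that $B_z \cap \Omega_0$ is a neighborhood of the nonempty open set $B_z \cap \interior(X_z)$ and one checks $F_z = F_0$ on all of $B_z \cap \Omega_0$. Similarly, for two boundary points $z, z'$ with $B_z \cap B_{z'} \ne \emptyset$, both $F_z$ and $F_{z'}$ equal $F_0$ on the overlap with $\Omega_0$, and since $B_z \cap B_{z'}$ meets $\interior(X) \subseteq \Omega_0$ in a nonempty open set (again using fatness of $X$ near $\partial X$, i.e.\ that $X = \ol{\interior(X)}$ so boundary points are accumulated by interior points — here one may need to further shrink so the overlaps stay inside a region where the interior is met) one concludes $F_z = F_{z'}$ on $B_z \cap B_{z'}$. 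Hence the $F_z$ and $F_0$ patch to a single holomorphic function $F$ on $\Omega' := \Omega_0 \cup \bigcup_z B_z$, an open neighborhood of $X$, with $F|_{\interior(X)} = f$ and, by continuity and density of $\interior(X)$ in $X$, $F|_X = f$. The only delicate bookkeeping is the shrinking of the $\Omega_z$ to guarantee the overlaps always meet $\interior(X)$; this is where I would be most careful, but it is a routine consequence of $X$ being the closure of its interior.
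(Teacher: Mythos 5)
Your overall strategy is the one the paper uses: apply \Cref{main:6} to each $X_z$ to get local holomorphic extensions $F_z$ near the boundary, and glue them (together with an extension over $\interior(X)$ coming from the classical Bochnak--Siciak theorem) via the identity theorem and fatness. The gap is in the gluing, which is precisely the step the paper delegates to \Cref{lem:shrink}, a tool your ball-shrinking does not replace. Two concrete problems. First, $F_z$ is a holomorphic extension of $f|_{X_z}$ only, so it is known to agree with $f$ on $X_z$ (hence on $\interior(X_z)$) but \emph{not} ``on the real points of its domain that lie in $X$'': a component of $B_z\cap\Omega_0$ whose real trace meets $\interior(X)$ only outside $X_z$ is not reached by your identity-theorem argument, and such components do occur (near $z$ the set $\interior(X)$ may have several local pieces, of which only some belong to $X_z$). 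Second, for two boundary points $z,z'$ the intersection $B_z\cap B_{z'}$ is convex, hence connected, but there is no reason for it to meet $\interior(X)$ at all --- the real lens $B_z^\R\cap B_{z'}^\R$ can lie entirely outside $X$ --- and shrinking the balls only makes this worse. So the assertions ``one checks $F_z=F_0$ on all of $B_z\cap\Omega_0$'' and ``$B_z\cap B_{z'}$ meets $\interior(X)$ \dots\ using fatness'' are exactly the points that fail; they are not routine bookkeeping.

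The paper's device is \Cref{lem:shrink}: one takes product neighborhoods $U_z=U_z^\R\times i(-r_z,r_z)^d$ and replaces $U_z^\R$ by the set of points $x$ for which every segment $[x,a]$, $a\in A_x$ (nearest points of $X$), stays inside $U_z^\R$. The product form guarantees that each component $V$ of an overlap satisfies $w+iv\in V\Rightarrow w\in V^\R$, and the shrinking guarantees that $V^\R$ contains a whole segment ending at a point $a\in X=\ol{\interior(X)}$; hence $V^\R$ is a neighborhood of $a$ and must meet $\interior(X)$. That is the ingredient your argument needs and does not supply. (Even granting it, one still has to justify why the various $F_z$ agree with $f$ on all of $U_z\cap\interior(X)$ and not merely on $U_z\cap\interior(X_z)$ --- the first problem above; the paper asserts this stronger agreement explicitly, and any complete write-up has to argue it, e.g.\ by comparing Taylor expansions at common boundary points.)
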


Note that all H\"older sets satisfy the assumption in \Cref{main:7}. 
Interestingly, for these results we need not assume that $X$ is simple 
(note that we already suppose that $f \in \cC^\infty(X)$).
Together with \Cref{main:1,main:4} we obtain:

\begin{corollary}
	Every $X \in \sH(\R^d)$ and
	every simple fat closed subanalytic $X \subseteq \R^d$ is 
	$\cA^\om$-admissible. \qed
\end{corollary}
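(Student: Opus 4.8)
The statement asserts $\cA^\om(X)=\cC^\om(X)$ for the two families of sets in question, and the plan is to split this into its two inclusions, each of which reduces to a result already available above.

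\textbf{The easy inclusion.} First I would observe that $\cC^\om(X)\subseteq\cA^\om(X)$ for \emph{every} closed set $X$. If $f\in\cC^\om(X)$, then in particular $f\in\cC^\infty(X)\subseteq\cA^\infty(X)$, so $f_*$ sends smooth curves to smooth curves. Given $c\in\cC^\om(\R,X)$ and $t_0\in\R$, pick a holomorphic $F$ on a neighborhood $U$ of $c(t_0)$ in $\C^d$ with $F|_{U\cap X}=f|_{U\cap X}$, and extend $c$ to a holomorphic map $\tilde c$ on a small complex disk about $t_0$ taking values in $U$; then $F\circ\tilde c$ is holomorphic and coincides with $f\circ c$ for real arguments near $t_0$, so $f\circ c$ is real analytic near $t_0$. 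Hence $f\in\cA^\om(X)$.

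\textbf{The main inclusion.} Now let $f\in\cA^\om(X)$. By definition $f\in\cA^\infty(X)$, and since $X$ is either a Hölder set or a simple fat closed subanalytic set, \Cref{main:1} (resp.\ \Cref{main:4}) yields $f\in\cC^\infty(X)$. Again by definition of $\cA^\om(X)$, the function $f$ is real analytic along every real analytic curve in $X$. If $X\in\sH(\R^d)$, then every point of $\p X$ lies in a closed fat subanalytic subset of $X$ — a polynomial cusp of sufficiently high index fits inside the uniform cusp witnessing the Hölder condition, as already noted after \Cref{main:7} — so \Cref{main:7} applies; if $X$ is fat closed subanalytic, \Cref{main:6} applies directly. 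In either case $f$ extends to a holomorphic function on an open neighborhood of $X$ in $\C^d$, i.e.\ $f\in\cC^\om(X)$. Combining the two inclusions gives $\cA^\om(X)=\cC^\om(X)$, which is exactly $\cA^\om$-admissibility.

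\textbf{Where the work sits.} No genuinely new difficulty arises at this stage: the corollary is a formal consequence of the $\cA^\infty$-admissibility theorems together with the Bochnak--Siciak-type extension statements, the only point needing a remark being that Hölder sets satisfy the hypothesis of \Cref{main:7}. The substantive obstacles — establishing $\cA^\infty(X)=\cC^\infty(X)$ via uniform polynomial cuspidality and the L-regular decomposition, and producing the holomorphic extension via uniformization and the Eakin--Harris theorem — are already discharged in the cited results, so the present argument is essentially an assembly step.
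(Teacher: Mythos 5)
Your proposal is correct and follows the same route as the paper: the corollary is obtained by combining \Cref{main:1} (resp.\ \Cref{main:4}) to get $f\in\cC^\infty(X)$ from $f\in\cA^\om(X)\subseteq\cA^\infty(X)$, and then \Cref{main:7} (resp.\ \Cref{main:6}) to produce the holomorphic extension, with the observation that H\"older sets satisfy the hypothesis of \Cref{main:7} being exactly the remark the paper makes just before stating the corollary. The only material you spell out that the paper leaves implicit is the easy inclusion $\cC^\om(X)\subseteq\cA^\om(X)$, and your argument for it is fine.
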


\subsubsection*{The ultradifferentiable case}

Let $M = (M_k)$ be a non-quasianalytic log-convex positive sequence.
For positive integers $a$ let $M^{(a)}$ denote the sequence 
defined by $M^{(a)}_k := M_{ak}$.

\begin{theorem} \label{main:2}
Let $M = (M_k)$ be a non-quasianalytic log-convex positive sequence.
Every Lipschitz set $X \subseteq \R^d$ satisfies $\cC^M(X) \subseteq \cA^M(X) \subseteq \cC^{M^{(2)}}(X)$. 
\end{theorem}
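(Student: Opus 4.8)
\textbf{Proof strategy for \Cref{main:2}.}
The plan is to prove the two inclusions separately. The inclusion $\cC^M(X) \subseteq \cA^M(X)$ is the easy direction: if $f \in \cC^M(X)$, then by definition $f$ is smooth on $X$ in the sense that near every point it is the restriction of a $\cC^M$-function on $\R^d$ (here I should be careful—the definition of $\cC^M(X)$ in the excerpt only asks for the derivative bounds, not local extendability, so I would instead argue directly). Given $c \in \cC^M(\R,X)$, the curve $c$ maps into $X$; on the interior this is clear, but we need estimates up to the boundary. The point is that $f$, as a $\cC^M(X)$-function, has all Fréchet derivatives $f^{(k)}$ defined and continuous on $X$ with the required Roumieu bounds on compact sets (this uses the Lipschitz property via \Cref{lem:converse}, since Lipschitz sets satisfy (6)). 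Then the classical Faà di Bruno / Gevrey-type composition estimate, applied with the $\cC^M$-bounds for $c$ and for $f$, yields that $f\o c \in \cC^M(\R,\R)$; here log-convexity of $M$ is exactly what makes the combinatorial sum in Faà di Bruno close up, so this is the standard stability-under-composition argument localized to a compact piece of $\R$.

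The harder inclusion is $\cA^M(X) \subseteq \cC^{M^{(2)}}(X)$. Let $f \in \cA^M(X)$. First, since $\cA^M(X) \subseteq \cA^\infty_M(X)$, and every Lipschitz set is in particular a H\"older set, \Cref{main:1} gives $f \in \cC^\infty(X)$; so $f$ is already smooth on $X$, and in particular $f|_{\interior(X)}$ is smooth with all derivatives extending continuously to $X$ by \Cref{lem:converse}. It remains to establish the $\cC^{M^{(2)}}$ growth bounds for the derivatives of $f$ on compact subsets of $X$. The key tool is a characterization of the $\cC^{M^{(a)}}$ growth of a function $g$ on an open set in terms of the behaviour of $g\o c$ along $\cC^M$-curves—more precisely, the statement that if $g_*\cC^M(\R,U)\subseteq\cC^M(\R,\R)$ then $g \in \cC^{M^{(2)}}(U)$ (or an effective version thereof; this is the mechanism behind \Cref{result:3}, and the loss of a factor of $2$ in the index is what one sees in non-optimal versions of such theorems). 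The plan is to fix a compact $K \subseteq X$ and a point $x_0 \in K \cap \p X$, reduce to a boundary chart in which $X$ looks like the region above a Lipschitz graph, and then for each direction $v$ plug in curves of the form $t \mapsto x + t v + (\text{correction staying in } X)$—the Lipschitz (uniform cusp with $\al = 1$) hypothesis guarantees such curves exist, stay in $X$, and have uniformly controlled $\cC^M$-norms. Applying the $\cC^M \to \cC^M$ hypothesis to a suitable family of such curves and collecting the resulting estimates via a Landau–Kolmogorov / polarization argument (to pass from directional derivatives at one point to full Fréchet derivatives with a uniform constant) produces bounds of the form $\|f^{(k)}(x)\| \le C\rho^k k!\, M_{2k}$, i.e. $f \in \cC^{M^{(2)}}(X)$.

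I expect the main obstacle to be the quantitative curve-fitting at the boundary: one must produce, for a given $\cC^M$ growth of the target data and a given direction, explicit $\cC^M$-curves lying in $X$ whose derivative norms are controlled purely in terms of the Lipschitz constant of $X$ and the sequence $M$, uniformly as the base point ranges over a compact set and approaches $\p X$. On open sets this is the content of the special-curve lemmas underlying \Cref{result:3} (typically curves built from $t\mapsto \sum a_k t^k$ with carefully chosen coefficients, or from affine pieces reparametrized by a fixed $\cC^M$ bump); the work here is to show these constructions survive being forced to remain inside a Lipschitz set, which is precisely where the uniform cusp property of index $1$ is used and where the factor-of-two loss $M \rightsquigarrow M^{(2)}$ enters (it comes from composing the curve-construction with the Lipschitz parametrization of the boundary, each contributing one copy of the growth). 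Once the boundary estimates are in hand, patching with a partition of unity over a finite cover of $K$ and using the already-established interior smoothness finishes the argument.
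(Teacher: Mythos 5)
Your first inclusion is handled exactly as in the paper: $\cC^M(X)\subseteq\cA^M(X)$ follows from Fa\`a di Bruno's formula and log-convexity of $M$ (the paper cites \cite{RainerSchindl12}), and your concern about the definition of $\cC^M(X)$ is resolved the way you suggest, since $f\in\cC^\infty(X)$ makes the chain rule available up to the boundary.

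For the second inclusion there is a genuine gap at the decisive step, namely the passage from ``$f\o c\in\cC^M(\R)$ for every $\cC^M$-curve $c$ in $X$'' to \emph{uniform} Roumieu bounds on a compact set. The hypothesis only yields constants $C_c,\rho_c$ depending on the individual curve $c$; applying it to a family of curves, one per base point and direction, and ``collecting the resulting estimates'' gives no uniformity of $C,\rho$ over the family, so the bound $\|f^{(k)}(x)\|\le C\rho^k k!\,M_{2k}$ does not follow as you describe. The paper resolves this by contradiction: if $f\notin\cC^{M^{(2)}}(X)$ near $a\in\p X$, one extracts sequences $x_n\to a$, $v_n\in S^{d-1}$ lying in the cone guaranteed by the Lipschitz property (this reduction to an open set of directions is \Cref{conesuffices}, proved via the Lagrange-interpolation estimate \Cref{Neelon}), and orders $k_n$ with $|d_{v_n}^{k_n}f(x_n)|\ge \la_n^{-3k_n-1}k_n!\,M_{2k_n}$, and then concatenates the arcs $c_n(t)=x_n+t^2\la_n v_n$ into a \emph{single} $\cC^M$-curve in $X$ using \Cref{curvelemma}; this one curve violates $f\o c\in\cC^M$. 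Some such gluing-into-one-test-curve device (or an equivalent uniform boundedness argument) is indispensable and is absent from your outline. Relatedly, you misplace the source of the loss $M\rightsquigarrow M^{(2)}$: it does not come from ``composing with the Lipschitz parametrization of the boundary'' (\Cref{result:3} on open sets has no loss at all), but from the fact that a test curve through a boundary point $x$ in direction $v$ must be taken of the form $t\mapsto x+t^2\la v$ in order to remain in $X$ for both signs of $t$; then $(f\o c)^{(2k)}(0)=\tfrac{(2k)!}{k!}\la^k\,d_v^kf(x)$, so the hypothesis controls the $k$-th directional derivative only by $M_{2k}$.
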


A similar statement can be expected for H\"older sets (with the loss of regularity also depending on the 
H\"older index). We will not pursue this in this paper. Instead,
combining our results with 
a result of \cite{ChaumatChollet99} and \cite{BelottoBierstoneChow17}, 
we show in \Cref{main:5} that for fat closed subanalytic sets
 the loss of regularity can be controlled in a precise way.

	In an earlier version of the paper we claimed that every Lipschitz set $X \subseteq \R^d$ is 
	$\cA^M$-admissible. That is doubtful, but we do not have a counterexample.

\subsection{Permanence of admissibility}
The main results all concern subsets $X \subseteq \R^d$ with maximal dimension $d$. 
The following permanence properties yield further examples of admissible sets both of maximal dimension 
and of codimension $\ge 1$.

\begin{proposition} \label{prop:permanence}
	Let $X \subseteq \R^d$ be $\cA^\infty$-admissible. If $U$ is an open neighborhood of $X$ in $\R^d$ and 
	$\vh : U \to \R^e$ is a smooth embedding, then $\vh(X) \subseteq \R^e$ is $\cA^\infty$-admissible.  
\end{proposition}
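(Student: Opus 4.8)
The plan is to reduce the statement for $\vh(X)$ to the hypothesis on $X$ by transporting curves and functions back and forth through the embedding. Write $Y := \vh(X) \subseteq \R^e$. We must show $\cA^\infty(Y) = \cC^\infty(Y)$. The inclusion $\cC^\infty(Y) \subseteq \cA^\infty(Y)$ is immediate: if $g : Y \to \R$ is smooth, then locally $g$ is the restriction of a smooth $G$ on an open set in $\R^e$, and for any $c \in \cC^\infty(\R,Y)$ we have $g \o c = G \o c$ is smooth. So the content is the reverse inclusion: an arc-smooth $g : Y \to \R$ must be smooth on $Y$.

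First I would pass to the key geometric fact: since $\vh : U \to \R^e$ is a smooth embedding and $X \subseteq U$ is closed in $\R^d$ (and $U$ open), $\vh$ is a diffeomorphism onto its image $\vh(U)$, which is a (not necessarily closed) smooth submanifold of $\R^e$; moreover there is a smooth retraction, or at least a smooth left inverse, of $\vh$ defined on an open neighborhood of $\vh(U)$ in $\R^e$. Concretely, after shrinking $U$ one gets an open $V \supseteq \vh(U)$ in $\R^e$ and a smooth map $\ps : V \to U$ with $\ps \o \vh = \id_U$. (This is the standard tubular-neighborhood / immersion-retraction argument; it uses only that $\vh$ is an embedding.) Note $Y = \vh(X)$ is closed in $\R^e$ because $X$ is closed in $\R^d$ and $\vh$ is a homeomorphism onto $\vh(U)$ with $\vh(U)$ locally closed — in fact one should just check $Y$ is closed, or work locally around each point of $Y$, which suffices for the pointwise definition of smoothness on $Y$.

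Next I would run the two transport arguments. Given $g \in \cA^\infty(Y)$, define $f := g \o \vh|_X : X \to \R$. For any $c \in \cC^\infty(\R,X)$, the curve $\vh \o c$ lies in $Y$ and is smooth, so $g \o (\vh \o c) = f \o c$ is smooth; hence $f \in \cA^\infty(X)$. By $\cA^\infty$-admissibility of $X$ we get $f \in \cC^\infty(X)$, i.e., $f$ extends locally to a smooth function on $\R^d$. Now transport back: near a point $y_0 = \vh(x_0) \in Y$, pick a neighborhood $U_0$ of $x_0$ in $\R^d$ and $F \in \cC^\infty(U_0)$ with $F|_{U_0 \cap X} = f|_{U_0 \cap X}$; shrinking $V$ so that $\ps(V) \subseteq U_0$ near $y_0$, the function $G := F \o \ps$ is smooth on a neighborhood of $y_0$ in $\R^e$ and satisfies, for $y = \vh(x) \in Y$ near $y_0$, $G(y) = F(\ps(\vh(x))) = F(x) = f(x) = g(\vh(x)) = g(y)$. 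Thus $g$ is smooth at $y_0$, and since $y_0$ was arbitrary, $g \in \cC^\infty(Y)$.

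The main obstacle is the geometric lemma: producing the smooth left inverse $\ps$ of $\vh$ on a full neighborhood $V$ of $\vh(U)$ in $\R^e$ — or, more carefully, a local such left inverse near each point of $\vh(U)$, which is all that is needed since smoothness on $X$ and on $Y$ are defined pointwise via local extensions. This is routine differential topology (constant-rank / immersion form of a smooth map, or a partition-of-unity patching of local retractions), but some care is required because $\vh(U)$ need not be closed in $\R^e$; one avoids this by never asking for a global tubular neighborhood and instead producing $\ps$ only locally around each $y_0 \in Y$, which is exactly what the argument above uses. Everything else is the formal "composition of smooth maps is smooth" bookkeeping together with a single invocation of the $\cA^\infty$-admissibility of $X$.
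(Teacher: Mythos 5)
Your proof is correct and follows essentially the same route as the paper: transport the function back via $\vh$ to get an element of $\cA^\infty(X)$, invoke admissibility of $X$ to obtain local smooth extensions $F$, and push these forward to local ambient extensions near points of $\vh(X)$. The paper phrases the last step by reducing to smoothness on the embedded submanifold $\vh(U)$ (writing $F\o\vh^{-1}$ there and leaving the ambient extension implicit), whereas you make the same point explicit via the local left inverse $\ps$; this is only a difference in presentation, not in substance.
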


\begin{proof}
	Let $Y := \vh(X)$. 
	If $f \in \cA^\infty(Y)$, then $g := f \o \vh \in \cA^\infty(X)$. 
	Since $M := \vh(U)$ is an embedded submanifold of $\R^e$, it suffices to show that 
	for each $y \in Y$ there is a neighborhood $V$ in $M$ and a smooth function $F : V \to \R$ such that 
	$F|_{V\cap Y} = f|_{V \cap Y}$.

	Since $X$ is $\cA^\infty$-admissible, for each $x \in X$ there is a neighborhood $W$ in $\R^d$ and a smooth function 
	$G : W \to \R$ such that $G|_{W \cap X} = g|_{W \cap X}$. Taking $U \cap W$ instead of $W$ we may assume that 
	$W \subseteq U$. 
	Then $F := G \o \vh^{-1}|_{\vh(W)}$ is smooth on $V := \vh(W)$ and satisfies 
	$F|_{V\cap Y} = f|_{V \cap Y}$. 
\end{proof}

The same proof yields the following.

\begin{proposition}
	Let $X \subseteq \R^d$ be $\cA^\om$-admissible. If $U$ is an open neighborhood of $X$ in $\R^d$ and 
	$\vh : U \to \R^e$ is a real analytic embedding, then $\vh(X) \subseteq \R^e$ is $\cA^\om$-admissible.  \qed
\end{proposition}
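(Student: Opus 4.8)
The plan is to mimic verbatim the argument given for \Cref{prop:permanence}, replacing the word ``smooth'' by ``real analytic'' throughout and checking that each step still goes through in the analytic category. First I would set $Y := \vh(X)$ and observe that if $f \in \cA^\om(Y)$, then $g := f \o \vh \in \cA^\om(X)$: indeed, for any $c \in \cC^\om(\R,X)$ the curve $\vh \o c$ lies in $\cC^\om(\R,Y)$, so $g \o c = f \o (\vh \o c)$ is real analytic, and the same computation with $\cC^\infty$ in place of $\cC^\om$ shows $g \in \cA^\infty(X)$; hence $g \in \cA^\om(X)$. Here one uses that $\vh$, being real analytic (hence smooth) on an open neighborhood of $X$, maps analytic (resp.\ smooth) curves in $X$ to analytic (resp.\ smooth) curves in $Y$.

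Next, since $\vh$ is a real analytic embedding, $M := \vh(U)$ is a real analytic submanifold of $\R^e$, and as before it suffices to produce, for each $y \in Y$, a neighborhood $V$ in $M$ and a \emph{real analytic} function $F : V \to \R$ with $F|_{V \cap Y} = f|_{V \cap Y}$; composing locally with a real analytic chart of $M$ then yields a holomorphic extension to a neighborhood in $\C^e$, which is the definition of $f$ being real analytic on $Y$. By $\cA^\om$-admissibility of $X$, for each $x \in X$ there is a neighborhood $W$ in $\R^d$ and a real analytic $G : W \to \R$ with $G|_{W \cap X} = g|_{W \cap X}$; shrinking $W$ we may take $W \subseteq U$. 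Then $F := G \o \vh^{-1}|_{\vh(W)}$ is real analytic on $V := \vh(W)$ (an open subset of $M$, since $\vh$ is an embedding, so $\vh^{-1}|_{\vh(W)}$ is real analytic) and satisfies $F|_{V \cap Y} = f|_{V \cap Y}$.

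The only point requiring the slightest care — and the step I would flag as the ``main obstacle,'' though it is a mild one — is the bookkeeping at the two places where one passes between intrinsic real analyticity on the submanifold $M$ and real analyticity as a restriction of a holomorphic function on an open set in $\C^e$: one must know that real analyticity on $Y$ can be checked on a real analytic submanifold containing $Y$, which follows because a real analytic chart of $M$ extends to a biholomorphism of neighborhoods in $\C^e$ and $\C^{\dim M} \times \C^{e - \dim M}$. Everything else is literally the proof of \Cref{prop:permanence} with ``$\cC^\infty$'' replaced by ``$\cC^\om$,'' which is why the paper states it can be proved by ``the same proof.'' Hence the one-word proof ``\textit{The same proof.}'' (or the displayed ``\qed'' already present) is fully justified, and no new ideas are needed.
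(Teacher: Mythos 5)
Your proposal is correct and is exactly the paper's intended argument: the paper proves this proposition by the remark ``the same proof yields the following,'' referring to the proof of the smooth permanence proposition, and you carry out precisely that transcription, including the one genuinely analytic point (that real analyticity of $F$ on the real analytic submanifold $M=\vh(U)$ yields the required local holomorphic extensions in $\C^e$). No discrepancies to report.
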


In the ultradifferentiable case we have the following. 
Note that, if $M=(M_k)$ is log-convex, then $\cC^M$ is stable under composition and 
the $\cC^M$ inverse function theorem holds. 
If $N \subseteq \R^e$ is an embedded submanifold of class $\cC^M$ (i.e., the chart change maps are of class $\cC^M$), 
then we define $\cC^M(N)$ to be the set of $f \in \cC^\infty(N)$ which are of class $\cC^M$ in every local coordinate 
chart. 
If $Y \subseteq N$, then let $\cC^M(Y)$ be the set of $\cC^\infty$-functions on $Y$ such that the defining 
estimates hold for all compact subsets in $Y$ in all local coordinate charts. 
The proof of \Cref{prop:permanence} implies the following.

\begin{proposition}
	Let $M= (M_k)$ be non-quasianalytic and log-convex, and let $N=(N_k)$ be a sequence with $M \le N$.	
	Assume that $X \subseteq \R^d$ satisfies $\cC^M(X) \subseteq \cA^M(X) \subseteq \cC^N(X)$. 
	If $U$ is an open neighborhood of $X$ in $\R^d$ and 
	$\vh : U \to \R^e$ is a $\cC^M$-embedding, then $Y:=\vh(X) \subseteq \R^e$ satisfies 
	$\cC^M(Y) \subseteq \cA^M(Y) \subseteq \cC^N(Y)$. 
	\qed 
\end{proposition}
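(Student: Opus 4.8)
The plan is to run the argument of \Cref{prop:permanence} verbatim, keeping track of the ultradifferentiable estimates along the way. Write $\Sigma := \vh(U)$; since $M$ is log-convex, the $\cC^M$ inverse function theorem applies, so $\vh : U \to \Sigma$ is a $\cC^M$-diffeomorphism onto the embedded $\cC^M$-submanifold $\Sigma \subseteq \R^e$, and $\psi := \vh^{-1} : \Sigma \to U$ is a (global) $\cC^M$-chart for $\Sigma$, restricting to charts around every point of $Y := \vh(X)$. Two observations make the transport work. First, in this chart the chart representation of a function $h : Y \to \R$ is simply $h\o\psi^{-1}|_X = h\o\vh|_X : X \to \R$; hence, directly from the definitions of $\cC^M$ and $\cC^N$ on subsets of a $\cC^M$-manifold, one has $h \in \cC^M(Y)$ (resp.\ $h \in \cC^N(Y)$) if and only if $h\o\vh|_X \in \cC^M(X)$ (resp.\ $\in \cC^N(X)$). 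Second, because $\cC^M$ is stable under composition, $c \mapsto \vh\o c$ maps $\cC^M(\R,X)$ into $\cC^M(\R,Y)$, and $c\mapsto \psi\o c = \vh^{-1}\o c$ maps $\cC^M(\R,Y)$ into $\cC^M(\R,X)$ (using that a $\cC^M$-curve with image in the embedded submanifold $\Sigma$ is a $\cC^M$-curve in $\Sigma$); these two maps are mutually inverse.

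With this in place the two inclusions are formal. For $\cC^M(Y) \subseteq \cA^M(Y)$: given $f \in \cC^M(Y)$, the first observation gives $g := f\o\vh|_X \in \cC^M(X)$, hence $g \in \cA^M(X)$ by the hypothesis on $X$; now for any $c \in \cC^M(\R,Y)$ we have $\vh^{-1}\o c \in \cC^M(\R,X)$, so $f\o c = g\o(\vh^{-1}\o c) \in \cC^M(\R,\R)$, i.e.\ $f \in \cA^M(Y)$. For $\cA^M(Y) \subseteq \cC^N(Y)$: given $f \in \cA^M(Y)$, put again $g := f\o\vh|_X$; for any $c \in \cC^M(\R,X)$ we have $\vh\o c \in \cC^M(\R,Y)$, so $g\o c = f\o(\vh\o c) \in \cC^M(\R,\R)$, whence $g \in \cA^M(X) \subseteq \cC^N(X)$ by the hypothesis on $X$; by the first observation (applied to $\cC^N$) this means exactly $f \in \cC^N(Y)$. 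Together these give $\cC^M(Y) \subseteq \cA^M(Y) \subseteq \cC^N(Y)$.

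The argument is essentially bookkeeping, so there is no single hard step; the two points needing a word of care are (i) the passage to a $\cC^M$-atlas of $\Sigma$, which is why log-convexity of $M$ is assumed (it gives both stability of $\cC^M$ under composition and the $\cC^M$ inverse function theorem), and (ii) the identification of the $\cC^N$-class on $$Y$$ with a property on $X$: this is read off in the distinguished chart $\psi = \vh^{-1}$ of $\Sigma = \vh(U)$, which is the natural one, and no composition of a $\cC^N$-function with a genuinely $\cC^M$ (non-chart) map is ever required, so no hypothesis on $N$ beyond $M \le N$ is used. (When $N$ is in addition log-convex, as for $N = M^{(2)}$ in \Cref{main:2}, the definition of $\cC^N$ on $Y$ is chart-independent anyway, and the restriction to a particular atlas is immaterial.)
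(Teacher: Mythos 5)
Your proof is correct and follows essentially the same route the paper intends: the paper gives no separate argument for this proposition but simply asserts that the proof of \Cref{prop:permanence} carries over, i.e.\ transport everything through $\vh$ and $\vh^{-1}$ using that log-convexity of $M$ yields stability of $\cC^M$ under composition and the $\cC^M$ inverse function theorem, exactly as you do. Your extra remarks on reading the $\cC^N$-estimates in the distinguished chart $\vh^{-1}$ (so that no composition of a $\cC^N$-function with a merely $\cC^M$ chart change is needed) address a point the paper leaves implicit, and are a welcome clarification rather than a deviation.
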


\subsection{Sharpness of the results}
We shall discuss in \Cref{counterexamples} counterexamples which show that none of the conditions in the 
main results can in general be omitted without suitable replacement.  

In particular, \Cref{flat}, which is based on a division theorem of \cite{JorisPreissmann90}, shows
that the \emph{$\infty$-flat cusp} $$X := \big\{(x,y) \in \R^2 : x\ge 0,\, 0 \le y \le \exp(-1/x)\big\}$$ 
is not $\cA^\infty$-admissible: in this case $\cA^\infty(X)$ is strictly larger 
than $\cC^\infty(X)$. 
Note, however, that for $Y := \R^2 \setminus \interior(X)$ we have $f \in \cA^\infty(Y)$ if and only if $f$ 
satisfies \Cref{lem:converse}(3), but $\cA^\infty (Y) \ne \cC^\infty(Y)$; see 
\Cref{ex:complementflatcusp}. 

Interestingly, the analogue for finite differentiability (i.e., \Cref{result:2}) fails even on convex fat closed sets 
such as the half-space; see \Cref{ex:Glaeser} 
which is a consequence of Glaeser's inequality.

\subsection{Applications} 
As a corollary of the real analytic result (i.e., \Cref{main:6}) we obtain that smooth solutions of real analytic equations 
on H\"older sets or closed fat subanalytic sets must be real analytic; see \Cref{thm:aneq}.
Furthermore, we obtain sufficient conditions for the existence of real analytic solutions $g$ of the 
equation $f = g \o \vh \in \cC^\om(M)$, where $\vh : M \to \R^d$ is a real analytic map defined on a real analytic 
manifold $M$; see \Cref{fep}. 

The usefulness of the smooth result is illustrated by some consequences for the division of smooth functions, 
see \Cref{thm:division}, and for pseudo-immersions, see \Cref{thm:pseudoimmersion}.  

\subsection{Structure of the paper} 
We recall facts on weight sequences and Denjoy--Carleman classes in \Cref{sec:curvelemma}, 
and we revisit and adapt the $\cC^M$ curve lemma which is an essential tool for proving some results of the paper. 
In \Cref{sec:domains} we introduce H\"older sets and collect some of their properties. 
The proofs of \Cref{main:1,main:4,main:6,main:2} are given in the Sections \ref{sec:mainA}, \ref{sec:subanalytic}, 
\ref{sec:BS}, and \ref{sec:mainB},
respectively.   
In \Cref{ssec:ultra} we discuss the ultradifferentiable case on subanalytic sets.  
The applications are given in \Cref{sec:applications}. 
The final \Cref{sec:complements} contains complements, examples, and counterexamples.

Some of the results of this paper were announced in \cite{Rainer17}.

\subsection*{Acknowledgements}
I am grateful to Vincent Grandjean, Andreas Kriegl, and Adam Parusi\'nski for helpful discussions.
A.\ Kriegl contributed \Cref{lem:shrink} and \Cref{ex:independent}
and A.\ Parusi\'nski suggested to use the results on H\"older sets to attack subanalytic sets.
In addition, I would like to thank the anonymous referees for their valuable comments.

\section{A \texorpdfstring{$\cC^M$}{CM}-curve lemma} \label{sec:curvelemma}

This section is only of relevance for the ultradifferentiable results in the paper.

\subsection{Weight sequences and Denjoy--Carleman classes} \label{sec:DenjoyCarleman}

Let $M = (M_k)_{k\in \N}$ be a positive sequence of reals. 
Let $U \subseteq \R^d$ be open and let $\cC^M(U,\R^m)$ be the corresponding   
Denjoy--Carleman class (of Roumieu type) as defined in \Cref{sec:intro}.

If $N=(N_k)$ is another positive sequence such that $(M_k/N_k)^{1/k}$ is bounded, then 
$\cC^M(U) \subseteq \cC^N(U)$. 
The converse holds if $k! M_k$ is logarithmically convex (\emph{log-convex} for short).
It follows that the class $\cC^M(U)$ is preserved by replacing $M=(M_k)_k$ by $(C^k M_k)_k$ for some positive 
constant $C$.

We shall assume that
the sequence $M$ is log-convex (which entails log-convexity of $k! M_k$).
We may assume that $M_0 = 1$ and that $M$ is increasing.
Indeed, the sequence $N_k:=C^k M_k/M_0$ for some constant $C \ge M_0/M_1$, 
is log-convex, increasing, satisfies $N_0 = 1$, and $\cC^M(U) = \cC^N(U)$. This motivates the following definition. 

\begin{definition}
	An increasing log-convex sequence $M = (M_k)$ with $M_0 =1$ is called a \emph{weight sequence}. 
	
\end{definition}

The regularity properties of a weight sequence $M=(M_k)$ entail stability properties of the class 
$\cC^{M}$; cf.\ \cite{RainerSchindl14}.
Of particular interest in this paper is the fact that, for a weight sequence $M$,
the composite of $\cC^M$ mappings is $\cC^M$.
By the celebrated Denjoy--Carleman theorem, the condition
\begin{equation} \label{nq}
 	\sum_{k} \frac{M_k}{(k+1) M_{k+1}}  < \infty
 \end{equation} 
holds if and only if $\cC^M$ is non-quasianalytic, i.e., 
the Borel mapping which sends germs at some point $a$ 
of smooth functions 
to their infinite Taylor expansion at $a$ is not injective on $\cC^M$-germs. 
Then there exist non-trivial $\cC^M$-functions with compact support.
Note that \eqref{nq} is equivalent to 
\begin{equation} \label{nq2}
 	\sum_{k} (k!\, M_k)^{-1/k}  < \infty.
\end{equation}

\begin{definition}
	Let $M=(M_k)$ be a weight sequence.
	We say that $M$ is \emph{non-quasianalytic} if it satisfies \eqref{nq}; otherwise it is said to be \emph{quasianalytic}. 
	A weight sequence $M$ is called \emph{strongly non-quasianalytic} if 
\begin{equation} \label{snq}
	\E C> 0 \A k \in \N : \sum_{j\ge k} \frac{M_{j-1}}{j M_j} \le C \frac{M_{k-1}}{M_k}.
\end{equation}
It is said to be of \emph{moderate growth} if 
\begin{equation} \label{mg}
	\E C > 0 \A j,k \in \N : M_{j+k} \le C^{j+k} M_j M_k.
\end{equation}
A weight sequence is called \emph{strongly regular} if it is strongly non-quasianalytic and of moderate growth.
\end{definition}

\begin{example}
	The Gevrey sequences $G^s_k = k!^s$, $s >0$, which give rise to the Gevrey classes $\cC^{G^s}$ are 
	strongly regular weight sequences. They appear naturally in the theory of (partial) differential equations. 
	For $s=0$ we recover the real analytic functions $\cC^{G^0} = \cC^\om$ which obviously form a quasianalytic class.   	
\end{example}

Note that $\cC^\om(U) \subseteq \cC^M(U) \subseteq \cC^\infty(U)$ for every weight sequence $M$. 
In fact, the Denjoy--Carleman classes form an a scale of spaces intermediate between the real analytic and 
the smooth functions.

\subsection{The \texorpdfstring{$\cC^M$}{CM} curve lemma revisited}

We generalize the $\cC^M$ curve lemma, see \cite[Section 3.6]{KMRc} and \cite[Section 2.5]{KMRq}, which was inspired by 
\cite[Lemma 2]{Boman67}. 

\begin{lemma} \label{curvelemma}
	There are sequences $t_k \to t_\infty$ and $s_k>0$ in $\R$ with the following property.
	For any non-quasianalytic weight sequence $M=(M_k)$ and each $a \in \N_{\ge 2}$ there is a real positive sequence 
	$\la_k \to 0$ satisfying 
	\begin{equation} \label{laM}
		\la_k \Big(\frac{M_{ak}}{M_k}\Big)^{\frac1{ak+1}} \to 0 \quad \text{ as } k \to \infty
	\end{equation}
	such that the following holds.
	Let $E$ be a Banach space.
	Let $c_k \in C^{\infty}(\R,E)$ be a sequence such that 
	\begin{equation} \label{eq:lamdaconverge}
		\big\{\la_k^{-1}c_k^{(\ell)}(t) : t \in I,\, \ell,k \in \N\big\}
	\end{equation}
	is 
	bounded in $E$, for every bounded interval $I \subseteq \R$. 
	Then there exists a $\cC^M$-curve $c : \R \to E$ with compact support and 
	$c(t_k+t) = c_k(t)$ for $|t| \le s_k$.	
\end{lemma}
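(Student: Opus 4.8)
The plan is to build the curve $c$ by gluing together the pieces $c_k$ on a sequence of tiny, pairwise disjoint intervals accumulating at $t_\infty$, after cutting each $c_k$ off with a suitable bump function. Concretely, I would fix once and for all a non-quasianalytic log-convex ``test'' sequence — say a Gevrey sequence $G^s$ with a convenient $s$ — together with a bump function $\varphi \in \cC^{G^s}(\R)$ supported in $[-1,1]$, equal to $1$ near $0$, which exists precisely because $G^s$ is non-quasianalytic. One then rescales: let $\psi_k(t) := \varphi(t/(2s_k))$, so $\psi_k \equiv 1$ on $|t| \le s_k$ and $\supp \psi_k \subseteq [-2s_k, 2s_k]$, and set $c(t) := \sum_k \psi_k(t - t_k)\, c_k(t - t_k)$, the sum being locally finite away from $t_\infty$ provided the $t_k$ are chosen so that $|t_{k+1}-t_k| > 4\max(s_k,s_{k+1})$. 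The choices of $t_k \to t_\infty$ and of the widths $s_k > 0$ are made first, independently of $M$ and of the $c_k$; the only freedom used afterwards is in the sequence $\la_k \to 0$, which must be tuned to $M$ and $a$ so as to beat the derivatives of the glued curve near $t_\infty$.

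The heart of the matter is the $\cC^M$-estimate at the accumulation point $t_\infty$, i.e.\ showing there are $C, \rho > 0$ with $\|c^{(n)}(t)\| \le C \rho^n n!\, M_n$ for all $n$ and all $t$ near $t_\infty$. Since the supports of the summands are disjoint, it suffices to estimate each $\|(\psi_k(\cdot - t_k) c_k(\cdot - t_k))^{(n)}\|$ uniformly in $k$. By the Leibniz rule this is a sum of terms $\binom{n}{j}\|\psi_k^{(j)}\|_\infty \|c_k^{(n-j)}\|_\infty$; here $\|\psi_k^{(j)}\|_\infty \le s_k^{-j}\|\varphi^{(j)}\|_\infty 2^{-j}$, and $\|\varphi^{(j)}\|_\infty \le A B^j j!\, G^s_j$ because $\varphi \in \cC^{G^s}$, while the hypothesis \eqref{eq:lamdaconverge} gives $\|c_k^{(n-j)}\|_\infty \le D \la_k$ on the relevant interval. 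So each summand is bounded by $D \la_k \cdot \binom{n}{j} s_k^{-j} A (B/2)^j j!\, G^s_j$. Summing over $j \le n$ and using $\binom{n}{j} j! \le n!$ and log-convexity to compare $G^s_j$-factors with $M_n$-factors, one arrives at a bound of the shape $\text{const}\cdot \la_k \cdot s_k^{-n} \cdot (\text{const})^n \cdot n!\, M_n \cdot (M_{?}/M_{?})$, where the quotient of $M$'s appears exactly because the cutoff costs up to $n$ derivatives of $\psi_k$ and the curve carries up to $n$ more, so one is comparing an ``$n$-derivative'' bump estimate against an ``$n$-derivative'' $\cC^M$ target on pieces whose natural scale is $s_k$. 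This is where the precise form \eqref{laM} enters: choosing $\la_k$ to decay fast enough that $\la_k (M_{ak}/M_k)^{1/(ak+1)} \to 0$ — with $a$ governing how many orders of magnitude of derivatives the gluing can cost relative to the width $s_k$ — kills the bad factor and leaves a uniform bound $\|c^{(n)}(t)\| \le C\rho^n n!\, M_n$. Non-quasianalyticity of $M$ is what makes such a $\la_k \to 0$ exist while the constants stay finite.

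Away from $t_\infty$ there is nothing to prove: locally only finitely many summands are nonzero, each $\psi_k(\cdot - t_k) c_k(\cdot - t_k)$ is smooth with compact support, and a product of a $\cC^{G^s}$ bump with a $\cC^\infty$ curve is certainly $\cC^M$ on any compact set (indeed any finite sum of smooth compactly supported functions is $\cC^M$ there, since the $\cC^M$-estimates are only required locally and $\cC^\om \subseteq \cC^M$ is not even needed — one just absorbs the finitely many $\cC^\infty$ bounds into $C$). Finally $c$ has compact support by construction (it is supported in a bounded neighborhood of $\{t_k\}\cup\{t_\infty\}$), and $c(t_k + t) = c_k(t)$ for $|t|\le s_k$ because $\psi_k \equiv 1$ there and all other summands vanish on that interval once the separation condition on the $t_k$ is imposed. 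The main obstacle is purely the bookkeeping in the previous paragraph: getting the exponent $1/(ak+1)$ and the argument $M_{ak}/M_k$ exactly right, which requires being careful about how many derivatives land on $\psi_k$ versus $c_k$ and how the factor $s_k^{-n}$ is matched against the $n!\,M_n$ growth — this is precisely the calibration already carried out in \cite[Section 3.6]{KMRc} and \cite[Section 2.5]{KMRq}, which I would follow and adapt.
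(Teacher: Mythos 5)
Your overall architecture (disjointly supported bumps times the $c_k$, accumulating at $t_\infty$, with $t_k$ and $s_k$ chosen universally and only $\la_k$ tuned to $M$ and $a$) matches the paper's. But there is a genuine gap at the single most important choice: you fix the cutoff $\varphi$ in a Gevrey class $\cC^{G^s}$ once and for all, independently of $M$. The lemma must hold for \emph{every} non-quasianalytic weight sequence $M$, including sequences lying far below every Gevrey class (e.g.\ sequences of the order $(\log k)^{2k}$ are non-quasianalytic), for which $(G^s_n/M_n)^{1/n}\to\infty$. For such $M$ your Leibniz estimate produces, already for a single fixed $k$, a factor $s_k^{-j}\,j!\,G^s_j$ against the target $\rho^n n!\,M_n$, and no geometric factor $\rho^n$ --- and certainly no constant $\la_k$, which does not depend on $n$ --- can absorb the quotient $G^s_n/M_n$. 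Log-convexity only lets you compare in the favourable direction; here the inequality points the wrong way. The same error reappears in your ``away from $t_\infty$'' paragraph: it is false that a finite sum of smooth compactly supported functions is automatically $\cC^M$ on a compact set (the $\cC^M$ bounds are a genuine constraint locally as well); what actually saves that part is the hypothesis \eqref{eq:lamdaconverge}, which bounds \emph{all} derivatives of $c_k$ of \emph{all} orders by the single constant $R\la_k$, not mere smoothness.

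The missing idea is that the bump must be taken in a class strictly \emph{below} $\cC^M$: by a lemma of Komatsu \cite[Lemma 6]{Komatsu79b}, every non-quasianalytic weight sequence $M$ admits a non-quasianalytic weight sequence $L$ with $(M_k/L_k)^{1/k}\to\infty$, and one chooses the bump $\vh \in \cC^L$ (so $\vh$ depends on $M$, which is permitted --- only $t_k$ and $s_k$ must be universal). The Leibniz estimate then gives $\|c^{(\ell)}(t)\| \le CR\la_j (2\rho/T_j)^{\ell}\ell!\,L_\ell$ on the $j$th block, and the divergence of $(M_\ell/L_\ell)^{1/\ell}$ is exactly what makes the choice $\la_j \le \inf_k T_j^{k+1}M_k/L_k$ a \emph{positive} number and yields the uniform bound $\la_j T_j^{-\ell}L_\ell \le M_\ell$, hence $\|c^{(\ell)}\| \le CR(2\rho)^\ell \ell!\,M_\ell$. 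Note also that you misattribute the role of \eqref{laM}: it plays no part in proving that $c$ is $\cC^M$; it is an additional requirement on $\la_k$, harmlessly arranged by shrinking $\la_k$ further, that is needed only in the later application of the lemma.
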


\begin{proof}
	There exists a non-quasianalytic weight sequence $L = (L_k)$ such that $(M_k/L_k)^{1/k} \to \infty$
	(this follows e.g.\ from \cite[Lemma 6]{Komatsu79b}). 
	Choose a $\cC^L$-function $\vh: \R \to [0,1]$ which is $0$ on $\{t : |t| \ge 1/2\}$ and $1$ 
	on $\{t : |t| \le 1/3\}$.   

	Let $T \in (0,1]$ and $R >0$. Assume that $\ga \in C^{\infty}(\R,E)$  
	is such that
	\[
		\| \ga^{(\ell)}(t) \| \le R \quad \text{ for all } |t| \le 1/2,\, \ell \in \N.
	\] 
	Then, there exist $C,\rh \ge 1$ such that for the curve $c(t) := \vh(t/T) \ga(t)$ we have
	\begin{align} \label{eq:CMcurve}
		\|c^{(\ell)}(t)\| &= \Big\|\sum_{j=0}^\ell \binom{\ell}{j} T^{-j} \vh^{(j)}\Big(\frac t T\Big) \ga^{(\ell-j)}(t) \Big\|
		\\
		&\le R \sum_{j=0}^\ell  \binom{\ell}{j} T^{-j} C \rh^{j} j! L_{j} \notag
		\le C R  \Big(1+\frac{\rh}{T}\Big)^{\ell} \ell! L_{\ell}
		\le C R  \Big(\frac{2\rh}{T}\Big)^{\ell} \ell! L_{\ell}   .
	\end{align}
	
	Choose a sequence 
	\begin{equation} \label{eq:Tt}
		\text{$T_j \in (0,1]$ with $\sum_j T_j < \infty$ and let $t_k := 2 \sum_{j<k} T_j + T_k$.}
	\end{equation}
	Now choose $\la_j$ such that the following conditions are fulfilled: 
	\begin{align}
		&0 < \frac{\la_j}{T_j^k} \le \frac{M_k}{L_k} \quad \text{ for all }  j,k, \label{la1}
		\\
		&\frac{\la_j}{T_j^k} \to 0 \quad \text{ as }  j \to \infty \text{ for all } k. \label{la2}
	\end{align}
	It suffices to take $\la_j \le \inf_{k} T_j^{k+1} M_k/L_k$.
	Clearly, we may in addition require that $\la_j$ tends to zero fast enough so that \eqref{laM} holds.

	By \eqref{eq:lamdaconverge}, there is $R >0$ such that   
	\[
		\|c_k^{(\ell)}(t)\| \le R \la_k \quad \text{ for all }  |t|\le 1/2,\, \ell,k \in \N.
	\] 
	Define 
	\[
		c(t) := \sum_j \vh\Big(\frac{t-t_j}{T_j}\Big) c_j(t-t_j).
	\]
	The summands have disjoint supports (the support of the $j$th summand is contained in $[t_j-T_j/2, t_j +T_j/2]$). 
	Thus $c$ is $\cC^\infty$ on $\R \setminus \{t_\infty\}$. By \eqref{eq:CMcurve},
	\[
		\|c^{(\ell)}(t)\| \le C R  \la_j  \Big(\frac{2\rh}{T_j}\Big)^{\ell} \ell! L_{\ell}   \quad \text{ for } |t-t_j| 
		\le \frac{T_j}{2}. 
	\]
	Consequently, by \eqref{la1},
	\[
		\|c^{(\ell)}(t)\| \le CR   (2\rh)^{\ell} \ell!  M_{\ell}     
		\quad \text{ for } t \ne t_\infty. 
	\]
	It follows that $c : \R \to E$ has compact support and is $\cC^M$ (cf.\ \cite[Lemma 2.9]{KM97} and \cite[Lemma 3.7]{KMRc}).
\end{proof}

\begin{remark}
	A similar statement holds for convenient vector spaces $E$. 
	The proof can be easily adapted to this case; cf.\ \cite{KMRc} or \cite{KMRq}.  
\end{remark}

The next lemma is a variant of \cite[Lemma 2.8]{KM97}.
Recall that, given some sequence $\mu_k \to \infty$, a sequence $x_k$ in $E$ is called \emph{$\mu$-convergent} to $x$ 
if $\mu_k (x_k - x)$ is bounded.

\begin{lemma} \label{special}
	For any non-quasianalytic weight sequence $M=(M_k)$ there is a positive sequence $\la_k \to 0$ 
	such that the following holds.
	Let $E$ be a Banach space.
	Let $x_n \to x$ be $1/\la_k$-convergent in $E$.  
	Then the infinite polygon through the $x_n$ and $x$ can be parameterized as a $\cC^M$-curve 
	$c : \R \to E$ such that $c(1/n) = x_n$ and $c(0) = x$.
\end{lemma}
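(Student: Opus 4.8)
The plan is to reduce \Cref{special} to \Cref{curvelemma} by building, from the $1/\lambda_k$-convergent sequence $x_n \to x$, a suitable family of smooth curves $c_k$ to which \Cref{curvelemma} applies, so that the resulting $\cC^M$-curve $c$ threads through the given points. First I would fix the abstract sequences $t_k \to t_\infty$ and $s_k > 0$ and the sequence $\lambda_k \to 0$ provided by \Cref{curvelemma} (for $a = 2$, say; the precise value of $a$ is irrelevant here since we only need \emph{some} $\cC^M$-curve). Relabeling if necessary, we may assume $t_\infty = 0$ and that $t_k = 1/k$ after an affine reparametrization of $\R$ (replacing $t \mapsto t$ by an orientation-preserving homeomorphism of $\R$ that is smooth away from $0$ and maps $t_k$ to $1/k$; since a $\cC^M$-curve precomposed with such a map near each $t_k$ is still locally $\cC^M$, and globally we can interpolate smoothly, this causes no loss — alternatively one simply redoes the construction in the proof of \Cref{curvelemma} with $T_j$ chosen so that $t_k = 1/k$). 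The key observation is that the infinite polygon through $x_n$ and $x$, suitably reparametrized, can be cut into pieces near each node, each of which is (after translation) a smooth — indeed affine, hence entire — curve on a small interval.

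The main step is to choose the curves $c_k$. On the interval $|t| \le s_k$ I would let $c_k(t)$ be the affine segment passing through $x_{k+1}$ at one endpoint and $x_k$ at the other, or more precisely the restriction to a neighborhood of $t_k$ of the straight line segment $[x_{k+1}, x_k]$, parametrized affinely; near $t_\infty = 0$ the relevant node value is $x$. Since each $c_k$ is affine, $c_k^{(\ell)} \equiv 0$ for $\ell \ge 2$, and $c_k' $ is the constant vector $(x_k - x_{k+1})/(\text{length of parameter interval})$, up to a harmless rescaling of the parameter interval. The hypothesis that $x_n \to x$ is $1/\lambda_k$-convergent means $\lambda_k^{-1}(x_k - x)$ is bounded, hence $\lambda_k^{-1}(x_k - x_{k+1})$ is bounded as well (and $\lambda_k^{-1} x_k$ is bounded once we allow the bound to absorb the constant). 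Thus, after arranging the parameter-interval lengths to be bounded below and above by fixed constants — which we can, since we are free to choose the $s_k$ and the $T_j$ in \Cref{curvelemma} — the set $\{\lambda_k^{-1} c_k^{(\ell)}(t) : t \in I,\ \ell, k \in \N\}$ is bounded in $E$ for every bounded interval $I$: the $\ell = 0$ terms are controlled by $1/\lambda_k$-convergence, the $\ell = 1$ terms by the bound on $\lambda_k^{-1}(x_k - x_{k+1})$, and the $\ell \ge 2$ terms vanish. This is exactly condition \eqref{eq:lamdaconverge}.

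With \eqref{eq:lamdaconverge} verified, \Cref{curvelemma} produces a $\cC^M$-curve $c : \R \to E$ with compact support such that $c(t_k + t) = c_k(t)$ for $|t| \le s_k$. In particular $c$ agrees with the affine segment $[x_{k+1}, x_k]$ on a neighborhood of $t_k$, so the values $c(t_k)$ are the nodes, and $c$ traces out exactly the infinite polygon through the $x_n$ between consecutive nodes (the pieces $c_k$ overlap on the polygon so the image is connected and is the polygon). After the affine reparametrization making $t_k = 1/k$ and $t_\infty = 0$, we get $c(1/n) = x_n$; and by continuity of the $\cC^M$-curve $c$ together with $x_n \to x$ we get $c(0) = x$. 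I would close by remarking that $c$ need not literally be the arc-length polygon, but it \emph{can} be taken to be, since the only freedom we used was reparametrization of each edge, which does not change the image; if an exact polygonal parametrization is wanted one chooses the parameter-interval lengths proportional to $|x_k - x_{k+1}|$, and one checks this stays within the admissible range because $|x_k - x_{k+1}| = O(\lambda_k) \to 0$.

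I expect the main obstacle to be purely bookkeeping: matching the fixed sequences $t_k, s_k$ from \Cref{curvelemma} (which are not free, being produced once and for all) with the requirement $c(1/n) = x_n$, and ensuring the parameter intervals on which the $c_k$ are defined are long enough to overlap into a genuine polygon yet short enough that \eqref{eq:lamdaconverge} holds with the \emph{given} $\lambda_k$. The cleanest route is probably not to reparametrize after the fact but to observe that the proof of \Cref{curvelemma} lets us choose the $T_j$ (hence the node spacing $t_k$) freely subject to $\sum_j T_j < \infty$; picking $T_j$ so that $2\sum_{j<k} T_j + T_k = 1/k$ is elementary, and then $s_k$ can be taken to be $T_k/2$, which is exactly the half-support of the $j$th bump in that proof. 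No genuinely hard estimate arises beyond those already contained in \Cref{curvelemma}.
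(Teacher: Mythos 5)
There is a genuine gap: the reduction to \Cref{curvelemma} cannot work, because the curve that \Cref{curvelemma} produces does not trace the polygon. In the proof of \Cref{curvelemma} one sets $c(t) = \sum_j \vh\bigl(\frac{t-t_j}{T_j}\bigr)\, c_j(t-t_j)$ with $\vh$ supported in $\{|t|\le 1/2\}$; the summands have pairwise disjoint supports, so $c$ agrees with $c_j(\cdot - t_j)$ only on the small windows $[t_j-s_j,\,t_j+s_j]$ and is identically $0$ \emph{between} them. Your claim that ``the pieces $c_k$ overlap on the polygon so the image is connected and is the polygon'' is therefore false: the windows are pairwise disjoint by construction, and the curve makes an excursion through the origin of $E$ between any two consecutive nodes. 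The conclusion of \Cref{curvelemma} says nothing about $c$ outside the windows, and its proof forces $c=0$ there, so no choice of the $T_j$ or $s_k$ repairs this. A second, independent problem is that affine parametrizations of the edges cannot be concatenated into a curve that is even $\cC^1$ at the nodes: the polygon has corners, so the velocity $(x_k-x_{k+1})/T_k$ on one edge differs from $(x_{k-1}-x_k)/T_{k-1}$ on the next, and $c'$ would jump at $t_k$. Any correct construction must reparametrize each edge so that all derivatives vanish at both endpoints.

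This is exactly what the paper does, by a direct construction rather than by invoking \Cref{curvelemma}: choose a non-quasianalytic weight sequence $L$ with $(M_k/L_k)^{1/k}\to\infty$ and a $\cC^L$ function $\vh$ with $\vh=0$ on $(-\infty,0]$ and $\vh=1$ on $[1,\infty)$, set $t_n=1/n$ and $T_n=1/(n(n+1))$, and on $[t_{n+1},t_n]$ define $c(t)=x_{n+1}+\vh\bigl(\frac{t-t_{n+1}}{t_n-t_{n+1}}\bigr)(x_n-x_{n+1})$. Since $\vh^{(k)}$ vanishes at $0$ and $1$ for $k\ge 1$, the pieces glue to a curve that is $\cC^\infty$ on $\R\setminus\{0\}$ and whose image is the polygon; on the $n$-th edge one has $c^{(k)}(t)=\vh^{(k)}(\cdots)\cdot\frac{\la_n}{T_n^k}\cdot\frac{x_n-x_{n+1}}{\la_n}$, and conditions \eqref{la1} and \eqref{la2} together with the $1/\la$-convergence hypothesis give the $\cC^M$ bounds and smoothness at $t=0$. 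Your observation that $\la_k^{-1}(x_k-x_{k+1})$ is bounded is correct and is the part of your argument that survives; it is the gluing mechanism that must be replaced.
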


\begin{proof}
	Let $L=(L_k)$ be a non-quasianalytic weight sequence with $(M_k/L_k)^{1/k} \to \infty$.
	Set $T_j := 1/(j(j+1))$ and choose $\la_j$ such that the conditions \eqref{la1} and \eqref{la2}
	are satisfied. 
	Let $\vh : \R \to [0,1]$ be a $\cC^L$-function which vanishes on $(-\infty,0]$ and is $1$ on $[1,\infty)$. 
	Let $t_n:= 1/n$ and define 
	\[
		c(t) := \begin{cases}
			x & \text{ if } t \le 0,
			\\
			x_{n+1} + \vh\Big(\frac{t - t_{n+1}}{t_{n} - t_{n+1}}\Big) (x_n - x_{n+1}) & \text{ if }  
			t_{n+1} \le t \le t_{n}
			\\
			x_1 & \text{ if } t\ge 1.
		\end{cases}
	\]
	Clearly, $c$ is $\cC^\infty$ on  $\R \setminus \{0\}$.
	For $t_{n+1} \le t \le t_{n}$ we have 
	\begin{align*}
		c^{(k)}(t) &=  \vh^{(k)} \Big(\frac{t - t_{n+1}}{t_{n} - t_{n+1}}\Big) 
		 (n(n+1))^k (x_n - x_{n+1})
		\\
		&=  \vh^{(k)} \Big(\frac{t - t_{n+1}}{t_{n} - t_{n+1}}\Big) 
		\cdot  \frac{\la_n}{T_n^k} \cdot \frac{x_n - x_{n+1}}{\la_n}.
	\end{align*}
	Condition \eqref{la2} guarantees that $c^{(k)}(t) \to 0$ as $t \to 0$ for all $k$, and hence $c$ is $\cC^\infty$ on $\R$. 
	That $c$ is of class $\cC^M$ follows from \eqref{la1}.
\end{proof}

\section{H\"older sets}
\label{sec:domains}

\subsection{Uniform cusp property and H\"older sets}

We denote by $B(x,\ep) := \{y \in \R^d : |x-y| < \ep\}$ the open ball with center $x$ and radius $\ep$ in $\R^d$.

\begin{definition}[Truncated open cusp] \label{def:cusp}
	Let us consider $\R^d = \R^{d-1} \times \R$ with the Euclidean coordinates $x = (x_1,\ldots,x_d) = (x',x_d)$.  
	For $0< \al \le 1$ and $r,h>0$, consider the \emph{truncated open cusp}
	\[
		\Ga_d^{\al}(r,h) 
		:= \big\{(x',x_d) \in \R^{d-1} \times \R : |x'| < r ,\, h (|x'|/r)^{\al} < x_d < h\big\}.
	\]
	For $\al=1$ this is a \emph{truncated open cone}.   
\end{definition}

\begin{definition}[Uniform cusp property]
	Let $U \subseteq \R^d$ be an open set and 
	let $\al \in (0,1]$.
	We say that $U$ has the \emph{uniform cusp property of index $\al$}   
	if
	for every $x \in \p U$ there exists $\ep>0$, a truncated open cusp $\Ga = \Ga_d^{\al}(r,h)$, and an 
	orthogonal linear map $A \in \on{O}(d)$ such that 
	for all $y \in \ol U \cap B(x,\ep)$ we have $y + A\Ga \subseteq U$.  
\end{definition}

\begin{definition}[H\"older set]
	By an \emph{$\al$-set} we mean a closed fat set $X\subseteq \R^d$ such that $\interior(X)$ has the 
	uniform cusp property of index $\al$. 
	We say that $X \subseteq \R^d$ is a \emph{H\"older set} if it is an $\al$-set for some $\al \in (0,1]$.
\end{definition}

We denote by $\sH^\al(\R^d)$ the collection of all $\al$-sets in $\R^d$ and by 
	\[
	\sH(\R^d) := \bigcup_{0<\al\le 1} \sH^\al(\R^d)
	\]
	the collection of all H\"older sets in $\R^d$.
Note that $\sH^\al(\R^d) \subseteq \sH^\be(\R^d)$ if $\al \ge \be$. 

\begin{remark}
A bounded open subset $U \subseteq \R^d$ has the uniform cusp property of index $\al$ 
if and only if $U$ has H\"older boundary of index $\al$ with 
uniformly bounded H\"older constant; see \cite[Theorem 6.9, p.116]{DelfourZolesio11} and \cite[Theorem 1.2.2.2]{Grisvard85}. 
That means the following.
At each boundary point $p$ there is an orthogonal system of coordinates $(x',x_d)$ and an $\al$-H\"older function $a = a(x')$ 
such that in a neighborhood of $p$ 
the boundary of $U$ is given by $\{x_d = a(x')\}$ and the set $U$ is of the form $\{x_d > a(x')\}$. 
There is a uniform bound for the H\"older constant of $a$ which is independent of the boundary point $p$. 	
\end{remark}

	The boundary of an $\al$-set with $\al<1$ can be quite irregular.
	It may have Hausdorff dimension strictly larger than $d-1$ and hence its Hausdorff measure $\cH^{d-1}$ 
	may be locally infinite. See \cite[Theorem 6.10, p.~116]{DelfourZolesio11}.

\begin{example} \label{ex:alphadomain}
	(1) The set $X = \{(x,y) \in \R^2 : x\ge 0, \,  |y| \le  x^{1/\al}\}$ is an $\al$-set.

	(2) The set $X = \{(x,y) \in \R^2 : x\ge 0, \,  x^2 \le y \le  2x^{2}\}$ is not a H\"older set, 
	but $X$ is the image of the H\"older set $\{(x,y) \in \R^2 : x\ge 0, \,  |y| \le  x^{2}/2\}$ under the 
	diffeomorphism $(x,y) \mapsto (x,y+3x^2/2)$ of $\R^2$. 

	(3) The set $X = \{(x,y) \in \R^2 : x\ge 0, \, x^{3/2} \le y \le 2 x^{3/2}\}$ is not a H\"older set 
	and there is no smooth diffeomorphism of $\R^2$ which maps $X$ to a H\"older set. 

	(4) Let $C \subseteq [0,1]$ be the Cantor set and let $f : [0,1] \to \R$ be defined by 
	$f(x) := \on{dist}(x,C)^\al$. Then the set $X = \{(x,y) \in \R^2 : -1\le x \le 2,\,  f(x)\le y \le 2 \text{ if } x \in [0,1], \,
	0\le y \le 2 \text{ if } x \not\in [0,1]\}$ is an $\al$-set. 
\end{example}

\subsection{\texorpdfstring{$c^\infty$}{c-infinity}-topology on H\"older sets}

The \emph{$c^\infty$-topology} on a locally convex space $E$ is the final topology with respect to all 
smooth curves $c : \R \to E$. 
The $c^\infty$-topology on $\R^d$ coincides with the usual topology; cf.\ \cite[Theorem 4.11]{KM97}.
The \emph{$c^\infty$-topology} on a subset $X \subseteq E$ is the final topology with respect to all 
smooth curves $c : \R \to E$ satisfying $c(\R) \subseteq X$.

\begin{proposition} \label{topology}
	Let $X \in  \sH(\R^d)$. 	
	Then the $c^\infty$-topology of $X$ coincides with the trace topology from $\R^d$. 
\end{proposition}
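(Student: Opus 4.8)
The plan is to show both inclusions of topologies. One direction is essentially formal: every smooth curve $c : \R \to \R^d$ with $c(\R) \subseteq X$ is continuous into $\R^d$, so any set which is closed in the trace topology of $X$ is $c^\infty$-closed; hence the $c^\infty$-topology is finer than (or equal to) the trace topology. The content is in the converse: given a subset $A \subseteq X$ which is closed in the $c^\infty$-topology, I must show it is closed in the trace topology, i.e.\ that if $x_n \to x$ in $\R^d$ with $x_n \in A$ and $x \in X$, then $x \in A$. Equivalently, it suffices to produce, for any sequence $x_n \to x$ in $X$ (in the trace topology), a \emph{smooth} curve $c : \R \to \R^d$ with $c(\R) \subseteq X$ which passes through a subsequence of the $x_n$ and through $x$ itself; then $c^\infty$-closedness of $A$ forces $x \in \overline{A}^{\,c^\infty}\cap\{\text{values of }c\}$... more carefully, $c^{-1}(A)$ is closed in $\R$ and contains $1/n_k$, hence contains $0$, so $x = c(0) \in A$.

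So the crux is a \emph{curve-fitting lemma inside a H\"older set}: given $x_n \to x$ in $X \in \sH^\al(\R^d)$, find a smooth curve in $X$ through (a subsequence of) the $x_n$ converging to $x$. First I would reduce to the case $x \in \p X$ (if $x \in \interior(X)$ the $x_n$ are eventually in the open set $\interior(X)$ and the classical fact — \cite[Theorem 4.11]{KM97} — that the $c^\infty$- and trace topologies agree on open subsets of $\R^d$ applies; one then splices this onto a short arc reaching $x$). Near $x \in \p X$ I would invoke the uniform cusp property: there are $\ep>0$, a truncated cusp $\Ga = \Ga^\al_d(r,h)$, and $A \in \on{O}(d)$ with $y + A\Ga \subseteq \interior(X)$ for all $y \in \ol X \cap B(x,\ep)$. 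The idea is to route the curve \emph{through the interior}: instead of joining $x_{n}$ to $x_{n+1}$ directly (which may leave $X$), push each point a controlled distance into the cusp direction $A e_d$ and connect the pushed points by segments that lie in $\interior(X)$, the admissible shift being of order $|x_n - x|^{1/\al}$ by the geometry of the cusp (the half-opening of $y+A\Ga$ at height $t$ above $y$ has width $\sim t^{1/\al}$, or dually a point at horizontal distance $\de$ from $y$ is inside the cusp once it is raised by $\sim \de^\al$... I must get the exponent bookkeeping right, but this is exactly the uniform cusp estimate). As $x_n \to x$ the required shifts tend to $0$, so the polygonal path converges to $x$; then one reparametrizes it as a smooth curve vanishing to infinite order at the relevant parameter, using a standard smoothing/gluing argument in the spirit of \Cref{special} (with $\cC^\infty$ in place of $\cC^M$, so no weight sequence is needed) — or one simply cites that smooth curves realizing Mackey-convergent sequences exist, e.g.\ \cite[Section 2.8]{KM97}, after arranging fast convergence by passing to a subsequence.

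The main obstacle I anticipate is making the ``push into the interior'' uniform and compatible along the whole sequence: the orthogonal map $A$ and cusp $\Ga$ attached to $x$ work for all $y$ in a \emph{fixed} ball $B(x,\ep)$, so once $x_n \in B(x,\ep)$ (true for large $n$) I have a single cusp direction to shift along, which is what makes the segments joining consecutive shifted points stay in $\interior(X)$ — one must check that both endpoints' cusps cover the connecting segment, again a quantitative cusp-width estimate. The second, more technical point is the smoothing: arranging that the reparametrized curve is genuinely $\cC^\infty$ at the limit parameter requires the shifts and the polygon's ``velocities'' to decay fast enough, which one secures by thinning the sequence (replace $x_n$ by $x_{n_k}$ with $|x_{n_k} - x|$ decaying faster than any prescribed rate) before applying the gluing construction; since closedness in a sequential setting is insensitive to passing to subsequences (the $c^\infty$-topology on a subset of $\R^d$ is sequential here as it is a quotient of curves), this costs nothing. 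Modulo these two quantitative checks, the argument is complete.
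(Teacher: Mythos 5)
Your proposal is correct and follows essentially the same route as the paper: reduce to producing a smooth curve in $X$ through a (fast-converging) subsequence of $x_n$ and through $x$, use the uniform cusp at $x$ to route a polygon through the interior via points pushed into the cusp direction (the paper takes $u_{n+1}\in C_n\cap C_{n+1}$ with $C_n=x_n+\Ga_d^\al(\mu_n,\mu_n^\al)$ and the vertical projections $\pi_n$), and then parameterize the infinite polygon smoothly via \cite[Lemma 2.8]{KM97} after thinning so that $|x-x_n|$ decays faster than any polynomial. The only point to be careful about, which your sketch leaves slightly ambiguous, is that the polygon must actually visit the points $x_{n_k}$ themselves (as the paper's $P_n$ does, with $x_n$ a vertex), not merely their pushed interior copies, so that $c^{-1}(A)\ni 1/k$ forces $0\in c^{-1}(A)$.
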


\begin{proof}
Let $A\subseteq X$ be $c^\infty$-closed in $X$.
	Let $\ol A$ be the closure of $A$ in $\R^d$. 
	We have to show that $\ol A \cap X  = \ol A \subseteq A$.
	The converse implication is obvious.

	Let $x \in \ol A$. 
	Then there is a sequence $x_n \in A$ which tends to $x$. 
	It suffices to find a smooth curve $c \in \cC^\infty(\R,X)$ passing through 
	a subsequence of $x_n$ and through $x$. Since $A$ is $c^\infty$-closed in $X$, this shows 
	$x \in A$.

	Since $X$ is an $\al$-set, for some $0< \al \le 1$,
	we may assume that 
	there is a neighborhood $U$ of $x$ in $X$ and a cusp 
	$\Ga = \Ga_d^{\al}(r,h)$
	such that for all $y \in U$ we have $y+\Ga \subseteq \interior(X)$.
	By rescaling, we may assume that $r=h=1$.

	Consider $C(y,r) := y + \Ga_d^{\al}(r,r^\al)$ for $0<r\le 1$. It is easy to see that
	there is a universal constant $c>0$ such that $C(y_1,r_1) \cap C(y_2,r_2) \ne \emptyset$
	provided that $|y_1-y_2| \le c \min\{r_1,r_2\}$. 

	Choose a decreasing sequence $\mu_n$ which tends to $0$ faster than any polynomial. 
	By passing to a subsequence of $x_n$ (again denoted by $x_n$), we may assume that 
	$|x - x_n| \le c \mu_{n+1}/2$ for all $n$. Then, for all $n$,
	\[
		|x_n - x_{n+1}| \le |x- x_n| + |x- x_{n+1}| \le c \mu_{n+1}.
	\]
	Setting $C_n:= C(x_n,\mu_n)$
	this guarantees the existence of a sequence $u_n$ such that $u_{n+1} \in C_n \cap C_{n+1}$ for all $n$.
	By construction, $x_n$ and $u_n$ tend to $x$ faster than any polynomial.

	\begin{figure}[ht]
		\includegraphics[scale=0.3]{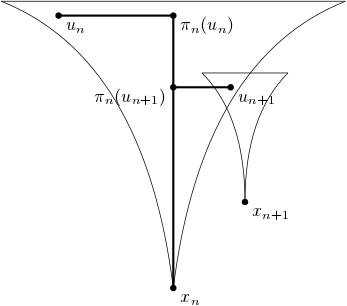}
	\end{figure}

	For $u \in C_n$ define $\pi_n(u) := x_n + u_d e_d$ (where $\{e_i\}$ is the standard basis in $\R^d$). 
	Consider the polygon $P_n$ through the points $u_n$, $\pi_n(u_n)$, $x_n$, $\pi_n (u_{n+1})$, $u_{n+1}$. 
	It is contained in $C_n$. 
	The infinite polygon consisting of the concatenation of all $P_n$ satisfies the assumptions of 
	\cite[Lemma 2.8]{KM97} 
	and can hence by parameterized by a smooth curve $c$ which is contained in $X$ and satisfies $c(0) =x$.
\end{proof}

\begin{remark} \label{rem:topology}
It is not difficult to modify the proof  
in order to obtain the following:
{\it 
Let $X \in \sH(\R^d)$ and let $M=(M_k)$ be a non-quasianalytic weight sequence. 	
	Then the final topology on $X$ with respect to all $\cC^M$-curves $c : \R \to \R^d$ with $c(\R) \subseteq X$ 
	coincides with the trace topology from $\R^d$.} 	
It suffices to take $\mu_n := \la_n^{1/\al}$ for the sequence $\la_n$  
provided by \Cref{special}.
\end{remark}

\subsection{Further properties of H\"older sets} 

The following proposition is well-known. We include a proof for the convenience of the reader.

\begin{proposition} \label{prop:alpharegular}
	Let $X\in \sH^\al (\R^d)$.
	Then for each $x \in X$ there is a compact neighborhood $K$ of $x$ in $X$ and a constant $D>0$ 
	such that 
	any two points $y_1,y_2 \in K$ can be joined by a polygon $\ga$ contained in $K$ with 
	$\p X \cap \ga \subseteq \{y_1,y_2\}$ of length 
	\begin{equation*} 
		\ell(\ga) \le D |y_1-y_2|^\al.
	\end{equation*}
\end{proposition}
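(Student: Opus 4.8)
The plan is to dispose of the case $x\in\interior(X)$ at once and to reduce the case $x\in\p X$ to the uniform cusp property. If $x\in\interior(X)$, pick $s\in(0,1]$ with $\ol{B(x,s)}\subseteq\interior(X)$, set $K:=\ol{B(x,s/2)}$, and join $y_1,y_2$ by the straight segment: it lies in $\interior(X)$, so it meets $\p X$ nowhere, and its length is $|y_1-y_2|\le|y_1-y_2|^\al$. So assume $x\in\p X$. By the uniform cusp property of index $\al$ there are $\ep>0$, an orthogonal map $A\in\on{O}(d)$, and a truncated open cusp $\Ga=\Ga_d^\al(r,h)$ with $y+A\Ga\subseteq\interior(X)$ for all $y\in X\cap B(x,\ep)$; rotating coordinates we may take $A=\Id$, and since $\Ga_d^\al(\la r,\la^\al h)\subseteq\Ga_d^\al(r,h)$ for $0<\la\le1$ we are free to shrink $r$ and $h$.

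The core of the argument is a lift--traverse--descend construction. Given $y_1,y_2\in X\cap B(x,\ep/2)$ with $\eta:=|y_1-y_2|$ small and nonzero, set $t:=C\eta^\al$ with a constant $C=C(\al,r,h)$ to be chosen, and let $\ga$ be the polygon
\[
y_1\ \longrightarrow\ y_1+t\,e_d\ \longrightarrow\ y_2+t\,e_d\ \longrightarrow\ y_2 .
\]
Its two vertical links lie, except for the endpoints $y_1,y_2$ (which may be on $\p X$), in $y_1+\Ga$ and $y_2+\Ga$, since $(0,s)\in\Ga$ for $0<s<h$ and $t<h$ once $\eta$ is small. On the middle link a typical point has the form $y_1+(p,t')$ with $|p|\le\eta$ and $t'\in[t-\eta,t+\eta]$ (the height $t'$ drifts because $y_1,y_2$ need not be at the same level), and it lies in $y_1+\Ga$ whenever $h(|p|/r)^\al<t'<h$; as $h(|p|/r)^\al\le(h/r^\al)\eta^\al$ while $t'\ge C\eta^\al-\eta\ge\tfrac{C}{2}\eta^\al$ for $\eta\le1$ and $C\ge2$, this holds as soon as $C>2h/r^\al$ and $\eta$ is small enough that $t+\eta<h$. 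Equivalently: a lift to height $t\asymp\eta^\al$ exposes a horizontal slice of the cusp of radius $r(t/h)^{1/\al}\asymp\eta$ — exactly the room needed to shift horizontally by $|y_1'-y_2'|\le\eta$. Hence $\ga\subseteq\interior(X)$ except for $y_1,y_2$, and $\ell(\ga)=2t+\eta\le(2C+1)\eta^\al=:D\eta^\al$, using $\eta\le\eta^\al$.

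It remains to fit $\ga$ into a compact neighbourhood $K$ of $x$ in $X$. Since $\ga\subseteq(y_1+\ol\Ga)\cup(y_2+\ol\Ga)\cup\{y_1,y_2\}$, take $V:=\bigcup_{y\in X\cap\ol{B(x,\rho)}}(y+\Ga)$ for a small $\rho<\ep$ and set $K:=\ol V=\bigcup_{y\in X\cap\ol{B(x,\rho)}}(y+\ol\Ga)$. Then $K$ is compact, $K\subseteq X$ (as $V\subseteq\interior(X)$ and $X$ is closed), and $K$ is a neighbourhood of $x$ in $X$ (as $0\in\ol\Ga$, so $K\supseteq X\cap\ol{B(x,\rho)}$); and for $y_1,y_2\in X\cap\ol{B(x,\rho)}$ the polygon above lies in $K$. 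A pair of points in the adjoined ``cusp collar'' $K\setminus(X\cap\ol{B(x,\rho)})$ is handled by first moving each of them, within its own cusp and within $K$, to a base point of $X\cap\ol{B(x,\rho)}$ — using that the cusp condition holds at \emph{every} point of $X\cap B(x,\ep)$, interior points included — after separating off the subcase in which both lie in one common cusp; keeping the lengths under control there forces one more shrinking of $r,h,\rho$. This part is routine but fiddly.

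The one genuinely substantive point is the geometric observation of the second paragraph — that a vertical lift of height $\asymp\eta^\al$ opens a horizontal cross-section of the cusp of width $\asymp\eta$. Granting it, the length estimate falls out immediately, and the remaining obstacle is purely technical: verifying that $\ga$, with its first and last links possibly extended, stays inside a fixed compact neighbourhood of $x$.
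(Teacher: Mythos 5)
Your core construction is sound and is essentially the paper's: both arguments rest on the observation that a vertical lift of height $\asymp\eta^\al$ opens a horizontal cross-section of the cusp of width $\asymp\eta$, and your verification of the three links (including the height drift along the middle one) is correct. The difference is that the paper does not work with the cusp directly: it uses the equivalence of the uniform cusp property with the local representation of $X$ as the epigraph $\{x_d\ge f(x')\}$ of an $\al$-H\"older function (the remark following the definition of H\"older sets), takes $K$ to be a compact piece of that epigraph, and lifts to the level $z_d=f(z')+\ep|y_1-y_2|$ where $z'$ maximizes $f$ on $[y_1',y_2']$. With that choice the containment $\ga\subseteq K$ is automatic, since the polygon's horizontal coordinates stay on the segment $[y_1',y_2']$ and its heights stay between the graph and a fixed cap.

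The genuine gap is precisely the step you dismiss as ``routine but fiddly''. For your $K=\bigcup_{y\in X\cap\ol{B(x,\rho)}}(y+\ol\Ga)$, take $w_1\in y_1+\ol\Ga$ and $w_2\in y_2+\ol\Ga$ with $y_1\ne y_2$, both near the tops of their cusps and with $|w_1-w_2|$ tiny (such pairs exist and need not lie in a common cusp). Your patch --- descend each $w_i$ to its base point $y_i$ and then join $y_1$ to $y_2$ --- produces a path of length at least $|w_1-y_1|+|w_2-y_2|$, which is of the order of $\on{diam}(\ol\Ga)$ and is not bounded by $D|w_1-w_2|^\al$. The lift--traverse--descend polygon from $w_1$ to $w_2$ itself does exist, since the cusp condition holds at every point of $X\cap B(x,\ep)$, but it lies in $(w_1+\ol\Ga)\cup(w_2+\ol\Ga)$, and $w+\ol\Ga\not\subseteq K$ for a collar point $w$: the translated cusp overshoots the truncation height $h$, and $w$ need not belong to $\ol{B(x,\rho)}$. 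So neither the patch nor the original polygon closes the argument inside your $K$. The repair is to choose $K$ stable under the lifts you perform; the cleanest way is the paper's cylinder over the H\"older graph. As it stands, the collar case is not a technicality but the point where your proof is incomplete.
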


\begin{proof}
	Clearly each $x \in \interior(X)$ has this property. 
	Let $x \in \p X$.
	We may assume that in a compact neighborhood $K$ of $x$ the set 
	$X$ is the epigraph $\{x_d \ge  f(x')\}$ of a $\al$-H\"older function $f$ 
	with respect to an orthogonal system of coordinates $(x',x_d) = (x_1,\ldots,x_d)$.
	For two points $y_1,y_2 \in K$ consider the segments $S:= [y_1,y_2]$ and $S':= [y_1',y_2']$. 
	If $(y_1,y_2) \subseteq K \cap \interior(X)$ there is nothing to prove.
	Otherwise let $z' \in S'$ be such that 
	$f(z') = \max_{y' \in S'} f(y')$ and let $z = (z',z_d)$ with $z_d := f(z') + \ep |y_1 - y_2|$ 
	for some small $\ep>0$ such that $z \in K \cap \interior(X)$.
	It is possible to choose $\ep$ such that it only depends on $K$, not on $y_1,y_2$. 
	We have $(y_i)_d \le  f(z')$ and 
	thus $|(y_i)_d - f(z')| \le |f(y_i') - f(z')|$ for at least one $i \in \{1,2\}$, say for $i=1$. 
	If $(y_2)_d \le  f(z')$, then
	the polygon with vertices $y_1$, $(y_1',z_d)$, $(y_2',z_d)$, $y_2$ is contained in $K$, meets $\p X$ 
	at most at one of the points $y_i$, and has
	length
	\begin{align*}
		|(y_1)_d - z_d| &+  |(y_2)_d - z_d| + |y_1' - y_2'| 
		\\
		&\le |f(y_1') - f(z')| + |f(y_2') - f(z')| + 2\ep |y_1 - y_2| + |y_1' - y_2'| 
		\\
		&\le C |y_1' - z'|^\al + C|y_2' - z'|^\al + (1+2\ep) |y_1 - y_2|
		\\
		&\le D |y_1 - y_2|^\al,
	\end{align*}
	for constants only depending on $K$.
	If $(y_2)_d >  f(z')$, then the segment joining $z$ and $y_2$ is contained in $K \cap \interior(X)$, 
	and thus the polygon with vertices $y_1$, $(y_1',z_d)$, $z$, $y_2$ is contained in $K$, meets $\p X$ 
	at most at one of the points $y_i$, and has
	length
	\begin{align*}
		|(y_1)_d - z_d| &+  |y_2 - z| + |y_1' - z'| 
		\\
		&\le |f(y_1') - f(z')| + (1+\ep) |y_1 - y_2| + |y_1' - y_2'| 
		\\
		&\le D |y_1 - y_2|^\al. 
	\end{align*}
	This finishes the proof.
\end{proof}

\begin{proposition} \label{alpha-simple}
	Every $X \in \sH(\R^d)$ is simple in the sense of \Cref{def:simple}.
\end{proposition}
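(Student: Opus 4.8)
The plan is to reduce immediately to boundary points, and there to build an explicit neighborhood basis out of the local Hölder-graph description of $\partial X$. If $x \in \interior(X)$ there is nothing to do: the balls $B(x,\ep)$ with $B(x,\ep) \subseteq \interior(X)$ form a neighborhood basis of $x$ and $B(x,\ep) \cap \interior(X) = B(x,\ep)$ is connected. So assume $x \in \partial X$. As already exploited in the proof of \Cref{prop:alpharegular} (and following from the uniform cusp property via \cite[Theorem 6.9]{DelfourZolesio11}, \cite[Theorem 1.2.2.2]{Grisvard85}), after an orthogonal change of coordinates centering $x$ at the origin there are $\rho, \eta > 0$ and a continuous function $f$ on the ball $B'(0,\rho) = \{x' \in \R^{d-1} : |x'| < \rho\}$ with $f(0) = 0$ such that, with $\Om := B'(0,\rho) \times (-\eta,\eta)$,
\[
	X \cap \Om = \{(x',x_d) \in \Om : x_d \ge f(x')\}, \qquad \interior(X) \cap \Om = \{(x',x_d) \in \Om : x_d > f(x')\}.
\]

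Next, for each small $s \in (0,\eta)$ I would use continuity of $f$ at $0$ to choose a radius $r = r(s) \in (0,\rho)$ with $r(s) \le s$ and $|f(x')| < s$ for all $x' \in B'(0,r(s))$, and set $U_s := B'(0,r(s)) \times (-s,s) \subseteq \Om$. Since $U_s \subseteq (-s,s)^d$, the family $\{U_s\}_{0 < s < \eta}$ shrinks to $\{0\}$ and is therefore a neighborhood basis of $x$ in $\R^d$ (if the "neighborhoods" in \Cref{def:simple} are meant relative to $X$, take $\{U_s \cap X\}$ instead — it changes nothing below). The key computation is that
\[
	U_s \cap \interior(X) = \{(x',x_d) : |x'| < r(s),\ f(x') < x_d < s\},
\]
the lower wall $x_d = -s$ being inactive because $f(x') > -s$ on $B'(0,r(s))$. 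This is the region trapped between the graph of $f$ and the hyperplane $\{x_d = s\}$ over a ball, so it is path-connected: the vertical-interpolation map
\[
	\Phi : B'(0,r(s)) \times (0,1) \to U_s \cap \interior(X), \qquad \Phi(x',t) := \big(x',\,(1-t) f(x') + t s\big),
\]
is well defined, continuous, and surjective, and its domain is connected. Hence $U_s \cap \interior(X)$ is connected, which is exactly what simplicity of $X$ at $x$ requires.

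\textbf{Main obstacle.} There is no serious difficulty; the only two points needing a little care are (i) the identification of $\interior(X) \cap \Om$ with the \emph{strict} epigraph, which rests on continuity of the boundary function together with $X$ being fat (so that $X$ is the genuine closed epigraph and not the epigraph plus extraneous lower-dimensional pieces), and (ii) the bookkeeping showing that the $U_s$ really form a neighborhood basis, which is handled by tying the radius $r(s)$ to the height $s$. It is worth noting that the Hölder \emph{rate} $\al$ plays no role at all: the same argument shows that any fat closed subset of $\R^d$ which is locally the epigraph of a continuous function is simple.
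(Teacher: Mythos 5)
Your proof is correct. It rests on the same local ingredient as the paper's argument, namely the description of $X$ near a boundary point as the closed epigraph $\{x_d \ge f(x')\}$ of a continuous (in fact $\al$-H\"older) function in suitable orthogonal coordinates, but it then diverges: the paper simply observes that the polygon construction in the proof of \Cref{prop:alpharegular} --- which joins two points by going up over the maximum of $f$ on the segment between their projections, with length controlled by $D|y_1-y_2|^\al$ --- already produces, for each point, a basis of neighborhoods whose intersection with $\interior(X)$ is path-connected. You instead exhibit an explicit cofinal family of cylinders $U_s = B'(0,r(s)) \times (-s,s)$ and show that $U_s \cap \interior(X)$ is the continuous image of the connected set $B'(0,r(s)) \times (0,1)$ under vertical interpolation toward the top slice. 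Your route is more elementary and, as you note, makes visible that the H\"older exponent is irrelevant here: any fat closed set that is locally the epigraph of a continuous function is simple, whereas the paper's derivation formally invokes the quantitative H\"older length estimate it has already established for other purposes. The only bookkeeping points --- that $\interior(X)\cap\Om$ is the strict epigraph and that the $U_s$ shrink to $x$ --- are handled correctly by tying $r(s)\le s$ to the continuity of $f$ at the origin.
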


\begin{proof}
	The proof of \Cref{prop:alpharegular} implies that there is a basis of neighborhoods $\sU$ of each $x \in X$ 
	such that $\interior(X) \cap U$ is path-connected for each $U \in \sU$. 
\end{proof}

\section{Arc-smooth functions on H\"older sets} \label{sec:mainA}

The aim of this section is to prove \Cref{main:1}:
\emph{All $X \in \sH(\R^d)$ are $\cA^\infty$-admissible. 
	We even have  
	\begin{equation} \label{eq:alpha-domainM}
		\cA_M^\infty(X)=\cA^\infty(X) = \cC^\infty(X),
	\end{equation}
	for any non-quasianalytic weight sequence $M=(M_n)$.}

\begin{remark}
For fat closed convex sets $X \subseteq \R^d$, $\cA^\infty$-admissibility follows from a result of  
Kriegl \cite{Kriegl97}. 
The statement in \cite{Kriegl97} is more general:
{\it Let $X$ be a convex subset of a convenient vector space $E$ with non-empty interior. Then 
$f \in \cA^\infty(X)$ if and only if $f$ is smooth on $\interior(X)$ and all 
Fr\'echet derivatives $(f|_{\interior(X)})^{(n)}$
extend continuously to $f^{(n)} : X \to L_n(E,\R)$ with respect to the $c^\infty$-topology of $X$.} 
In general the $c^\infty$-topology is finer than the given locally convex topology.	
\end{remark}

\subsection{Proof of \texorpdfstring{\Cref{main:1}}{Theorem 4.1}}

It is evident that 
\[
	\cC^\infty(X) \subseteq \cA^\infty(X) \subseteq \cA_M^\infty(X).
\]
The second inclusion is by definition, the first inclusion is a simple consequence of the chain rule. 
Let us prove the other inclusions.

\begin{lemma} \label{lem:Faa}
	Let $1 \le p\le q$ be integers. 
	For $x \in \R^d$ and $v = (v',v_d) \in \R^d$ 
	let $c(t) = x + (t^{q} v', t^{p}v_d)$, for $t$ in a neighborhood of $0 \in \R$. Let 
	$f$ be of class $\cC^{q}$ in a neighborhood of the image of $c$. 
	Then:
	\begin{align*}
		\frac{(f\o c)^{(k)}(0)}{k!} = 
			\begin{cases}
				\frac{1}{j!} f^{(j)}(x)((0,v_d)^j) & \text{ if } k= jp < q,
				\\ 
				f'(x)((v',0)) & \text{ if } k =q \not\in p\N,
				\\
				f'(x)((v',0)) + \frac{1}{j!} f^{(j)}(x)((0,v_d)^j) & \text{ if } k = jp=q.
			\end{cases}
	\end{align*}
	For all other $k<q$ we have $(f\o c)^{(k)}(0)=0$. 
\end{lemma}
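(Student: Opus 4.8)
The plan is to reduce the statement to a comparison of Taylor expansions at $0\in\R$. Since $c$ is a polynomial (hence $\cC^\infty$) curve and $f$ is $\cC^q$ near $c(0)=x$, the composite $g:=f\o c$ is $\cC^q$ near $0$, so for every $k\le q$ the quantity $(f\o c)^{(k)}(0)/k!$ is precisely the coefficient of $t^k$ in the degree-$q$ Taylor polynomial of $g$ at $0$. It therefore suffices to produce \emph{any} polynomial $S$ with $g(t)=S(t)+o(|t|^q)$ as $t\to 0$ and read off its coefficients in degrees $\le q$; this is unambiguous because a polynomial that is $o(|t|^q)$ has vanishing coefficients in all degrees $\le q$.

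First I would set $a:=(v',0)$ and $b:=(0,v_d)$, so that $w(t):=c(t)-x=t^q a+t^p b$, and apply Taylor's theorem to $f$ at $x$: $f(x+w)=\sum_{j=0}^{q}\tfrac1{j!}f^{(j)}(x)(w^j)+o(|w|^q)$. Since $p\le q$ and $p\ge 1$ we have $|w(t)|=O(|t|^p)$, so evaluating the Peano remainder at $w=w(t)$ gives $o(|t|^{pq})$, in particular $o(|t|^q)$; hence $S(t):=\sum_{j=0}^{q}\tfrac1{j!}f^{(j)}(x)\big(w(t)^j\big)$ is an admissible choice. Expanding each term by multilinearity and symmetry of $f^{(j)}(x)$, and re-indexing by the number $i$ of arguments equal to $a$ and the number $m$ equal to $b$ (so $j=i+m$ and $\tfrac1{j!}\binom{j}{i}=\tfrac1{i!\,m!}$), I obtain the expansion
\[
	S(t)=\sum_{\substack{i,m\ge 0\\ i+m\le q}}\frac{t^{qi+pm}}{i!\,m!}\;f^{(i+m)}(x)\big(a^i,b^m\big).
\]

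The key point --- and the only place where the hypothesis $p\le q$ is genuinely used --- is that every summand containing at least one argument equal to $a$ already has $t$-degree $qi+pm\ge q$, with equality only for $(i,m)=(1,0)$, whose contribution is $t^q f'(x)(a)$. Consequently, for $k<q$ only the purely-$b$ summands ($i=0$) can contribute the monomial $t^k$, which forces $p\mid k$; writing $k=jp$, the coefficient is $\tfrac1{j!}f^{(j)}(x)(b^j)$, while it is $0$ if $p\nmid k$. For $k=q$ one collects the term $f'(x)(a)$ coming from $(i,m)=(1,0)$ together with the purely-$b$ term $\tfrac1{j!}f^{(j)}(x)(b^j)$ exactly when $q=jp$ for an integer $j$. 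Substituting $a=(v',0)$ and $b=(0,v_d)$ then yields the three listed cases together with the vanishing assertion. I do not expect a real obstacle here: the mathematical content is the degree estimate $qi+pm\ge q$ for the mixed terms, and the rest is routine bookkeeping of the index ranges; a direct computation via Fa\`a di Bruno would also work but is messier, since almost all derivatives of $c$ at $0$ vanish.
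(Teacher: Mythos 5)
Your proof is correct, and it takes a different route from the paper's. The paper disposes of this lemma in one sentence: it reads the coefficients directly off the Fa\`a di Bruno formula for $(f\o c)^{(k)}(0)$, using that $c^{(\al)}(0)/\al!$ vanishes except for $\al\in\{p,q\}$, so the only surviving terms come from partitions of $k$ into parts equal to $p$ or $q$, and for $k\le q$ these are exactly the all-$p$ partition and the single part $q$. You instead bypass Fa\`a di Bruno entirely: you Taylor-expand $f$ at $x$ with Peano remainder, substitute $w(t)=t^qa+t^pb$, check the remainder is $o(|t|^q)$, and identify Taylor coefficients of the $\cC^q$ function $f\o c$ by comparing with the resulting polynomial. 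All the steps you use are sound --- the uniqueness of the degree-$\le q$ Taylor polynomial, the estimate $o(|w|^q)=o(|t|^{pq})\subseteq o(|t|^q)$, the multinomial expansion with the $\tfrac{1}{j!}\binom{j}{i}=\tfrac{1}{i!\,m!}$ bookkeeping, and the degree count $qi+pm\ge q$ for $i\ge1$ with equality only at $(i,m)=(1,0)$; the degenerate case $p=q$ also comes out right, since then the $(1,0)$ and $(0,1)$ terms both sit in degree $q$ and sum to $f'(x)(v)$. The two arguments encode the same combinatorics (partitions of $k$ into parts $p$ and $q$ versus monomials $t^{qi+pm}$), but yours trades the invocation of Fa\`a di Bruno for the slightly more careful remainder analysis needed to justify reading coefficients off an asymptotic expansion; the paper's version is shorter given that it quotes the formula anyway, while yours is self-contained and arguably more transparent about where the hypothesis $p\le q$ enters.
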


\begin{proof}
	This follows easily from an inspection of the Fa\`a di Bruno formula 
	\[
		\frac{(f \o c)^{(k)}(t)}{k!} =  \sum_{j\ge 1} \sum_{\substack{\al_i>0 \\ \al_1 + \cdots + \al_j = k}} 
		\frac{f^{(j)}(c(t))}{j!} 
		\Big(\frac{c^{(\al_1)}(t)}{\al_1!}, \ldots, \frac{c^{(\al_j)}(t)}{\al_j!}\Big)
	\]
	and the special form of $c$.
\end{proof}

\begin{proposition}  \label{Krieglcusp} 
	Let $X \in  \sH(\R^d)$  
	and $f \in \cA^\infty(X)$. Then $f|_{\interior(X)}$ is smooth
	and its derivative $(f|_{\interior(X)})'$ extends uniquely to a mapping 
	$f' : X \to L(\R^d,\R)$ which belongs to $\cA^\infty(X, L(\R^d,\R))$, i.e.,
	\begin{equation} \label{concl}
		(f')_* \cC^\infty(\R,X) \subseteq \cC^\infty(\R,L(\R^d,\R)).
	\end{equation}
\end{proposition}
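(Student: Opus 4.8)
The plan is to build the extension $f'$ by hand across $\p X$ with the help of cuspidal ``probe curves'', and then to deduce \eqref{concl} by feeding a two-parameter family into Boman's theorem. Since every smooth curve into $\interior(X)$ is a smooth curve into $X$, Boman's theorem \Cref{result:1} immediately gives $f|_{\interior(X)}\in\cC^\infty(\interior(X))$, so $D(f|_{\interior(X)})\colon\interior(X)\to L(\R^d,\R)$ is smooth; and since $\interior(X)$ is dense in $X$, any continuous extension to $X$ is unique. Thus it remains to produce a continuous extension and to verify \eqref{concl}. Everything being local, I fix $x_0\in\p X$ and, after an orthogonal change of coordinates, $\ep>0$ and a truncated cusp $\Ga=\Ga_d^\al(r,h)$ with $y+\Ga\subseteq\interior(X)$ for all $y\in X\cap B(x_0,\ep)$. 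I also fix an even integer $p\ge2$, an odd integer $q>p/\al$, and a compactly supported cutoff $\psi\colon\R\to\R$ with $\psi(t)=t$ near $0$. For $y\in X\cap B(x_0,\ep)$ and $w=(w',w_d)$ with $|w'|\le1$, $0<w_d\le1$, the curve $t\mapsto y+(\psi(t)^qw',\psi(t)^pw_d)$ becomes, once $\psi$ is rescaled small enough (depending on $w_d$; here one uses $q\al>p$), a smooth curve $c_{y,w}\colon\R\to X$ which lies in $\interior(X)$ off $\psi^{-1}(0)$ and equals $y+(t^qw',t^pw_d)$ near $t=0$; hence $f\circ c_{y,w}\in\cC^\infty(\R)$. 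When $y\in\interior(X)$, \Cref{lem:Faa} gives $(f\circ c_{y,w})^{(p)}(0)=p!\,f'(y)(0,w_d)$ and $(f\circ c_{y,w})^{(q)}(0)=q!\,f'(y)(w',0)$, and, more generally, the Fa\`a di Bruno computation (also for probes having several distinct fast-growing powers $t^{q_1},\dots,t^{q_{d-1}}$ in the $x'$-directions) expresses, modulo lower-order derivatives of $f$, any component of any $f^{(j)}(y)$ through a derivative of $f\circ(\text{probe})$ at $0$.

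The heart of the matter — and, I expect, the main obstacle — is to prove that each $f^{(j)}$ is bounded on $\interior(X)\cap B(x_0,\ep')$ for some $\ep'>0$. I would do this by contradiction and induction on $j$, the derivatives of order $<j$ being already bounded. If $f^{(j)}$ were unbounded near $x_0$, then by the previous paragraph (and polarization) one obtains a fixed order $k$, points $y_n\in\interior(X)$ converging to $x_0$ as fast as one wishes with $\|f^{(j)}(y_n)\|$ as large as one wishes, and admissible probe directions shrinking to $0$, such that $\big|(f\circ(\text{probe}_n))^{(k)}(0)\big|\to\infty$ — possible \emph{precisely because the probes only require their base point to lie in $X\cap B(x_0,\ep)$, with no lower bound on $\on{dist}(\cdot,\p X)$}, so they evade the usual Cauchy-type estimates near the boundary. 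One then glues short central arcs of these probes, joined by short connecting arcs inside $\interior(X)$ — available because consecutive base points converge to $x_0$ arbitrarily fast and nearby cusps overlap, the connections being made \emph{along} the cusps exactly as in the proof of \Cref{topology}, of controlled length by \Cref{prop:alpharegular} — into a single smooth curve $c\colon\R\to X$ which near each relevant parameter agrees with a probe and whose derivatives all tend to $0$ at the accumulation parameter. Then $f\circ c\in\cC^\infty(\R)$ while $(f\circ c)^{(k)}$ is unbounded near that parameter, a contradiction. Keeping the glued curve inside $X$ near the cuspidal boundary while it still detects the blowing-up derivative is precisely the delicate point. (For the stronger identity $\cA_M^\infty(X)=\cA^\infty(X)$ one repeats the whole scheme with $\cC^M$-curves, using \Cref{curvelemma}, \Cref{special}, and \Cref{rem:topology} in place of the smooth gluing and of \Cref{topology}.)

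Granted the boundedness, a continuous extension appears at once: by \Cref{prop:alpharegular} any two nearby $y,z\in\interior(X)$ are joined by a path inside $\interior(X)$ of length $\le D\,|y-z|^\al$, so the mean value theorem and the local boundedness of $f''$ give $\|f'(y)-f'(z)\|\le C\,|y-z|^\al$; hence $f'|_{\interior(X)}$ is locally H\"older near $\p X$ and extends uniquely to a continuous map $f'\colon X\to L(\R^d,\R)$. Finally, for \eqref{concl}, let $c\in\cC^\infty(\R,X)$; smoothness of $t\mapsto f'(c(t))$ is clear where $c(t)\in\interior(X)$, so fix $t_0$ with $x_0:=c(t_0)\in\p X$, take coordinates as above, and assume $c(t)\in X\cap B(x_0,\ep)$ for $t$ near $t_0$. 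For an admissible $w$ set $\Phi(t,h):=f\big(c(t)+(\psi(h)^qw',\psi(h)^pw_d)\big)$ near $(t_0,0)$. For every smooth curve $\sigma=(\sigma_1,\sigma_2)$ into a small neighborhood of $(t_0,0)$, the map $s\mapsto c(\sigma_1(s))+(\psi(\sigma_2(s))^qw',\psi(\sigma_2(s))^pw_d)$ is a smooth curve into $X$ (into $\interior(X)$ where $\sigma_2(s)\ne0$, equal to $c(\sigma_1(s))$ otherwise), hence $\Phi\circ\sigma\in\cC^\infty(\R)$; by Boman's theorem \Cref{result:1}, $\Phi$ is jointly smooth near $(t_0,0)$, and therefore $t\mapsto\partial_h^q\Phi(t,0)$ is smooth. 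On the other hand $h\mapsto\Phi(t,h)$ is smooth for each $t$; expanding $\partial_h^q\Phi(t,h)$ for $h\ne0$ by Fa\`a di Bruno (the argument of $f$ then lying in $\interior(X)$), and using $q\notin p\N$, the boundedness of the $f^{(j)}$, and the continuity of $f'$, every term converges, as $h\to0$, either to $0$ or to $q!\,f'(c(t))(w',0)$; hence $\partial_h^q\Phi(t,0)=q!\,f'(c(t))(w',0)$, which is smooth in $t$. The same argument with the probe $h\mapsto(0,\psi(h)^pw_d)$ and the $p$-th derivative makes $t\mapsto f'(c(t))(0,w_d)$ smooth, and by linearity $t\mapsto f'(c(t))\in L(\R^d,\R)$ is smooth near $t_0$. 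This establishes \eqref{concl} and completes the proof.
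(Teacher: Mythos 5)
Your overall mechanism --- cuspidal probe curves whose exponents are tuned to the H\"older index so that \Cref{lem:Faa} reads off $f'$ from Taylor coefficients of $f\circ(\text{probe})$ at $0$, combined with a two-parameter application of Boman's theorem to get smoothness in the base point --- is exactly the engine of the paper's proof. But your argument as written has a genuine gap at the step you yourself flag as ``the heart of the matter'': the boundedness of all $f^{(j)}$ near $\p X$. You reduce it to gluing a sequence of probes into a single smooth curve $c:\R\to X$ along which some $(f\circ c)^{(k)}$ blows up, but you never resolve the tension you point out: to make the glued curve $\cC^\infty$ at the accumulation parameter you must shrink the probe directions (and the spacing of the base points) to $0$ at a prescribed rate, while to keep $|(f\circ\text{probe}_n)^{(k)}(0)|\to\infty$ you need the contraction of $f^{(j)}(y_n)$ against the $j$-th power of those shrinking directions to still diverge; this forces an interleaved choice of rates and base points (compare the bookkeeping with the sequences $\la_n$ in the proof of \Cref{main:2}, or the quite different compactness argument used for the analogous boundedness claim in the subanalytic case, \Cref{main:4}). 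As it stands, the boundedness is asserted, not proved, and your continuity of $f'$ and your identification of $\partial_h^q\Phi(t,0)$ both rest on it.

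The paper shows this entire detour is unnecessary for the proposition. For $v$ in the cusp $\Ga$ one simply \emph{defines}
\[
 f'(x)(v):=\frac{(f\circ c_{x,v})^{(2p)}(0)}{(2p)!}+\frac{(f\circ c_{x,v})^{(2q)}(0)}{(2q)!},\qquad c_{x,v}(t)=x+(t^{2q}v',t^{2p}v_d),
\]
with $p/q\le\al$ and $q/p\notin\N$; this exists for every $x\in X$ because $c_{x,v}$ is a smooth curve into $X$, agrees with the Fr\'echet derivative for $x\in\interior(X)$ by \Cref{lem:Faa}, and is smooth along every smooth curve $s\mapsto x(s)$ in $X$ because $(s,t)\mapsto f(c_{x(s),v}(t))$ is jointly smooth by Boman (your own $\Phi$ argument). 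Uniqueness of the extension (continuity along curves reaching $x_0$ from the interior), linearity of $f'(x_0)$ as a pointwise limit of the linear maps $f'(x(s))$, the passage to arbitrary $v$ by writing $v=\ep^{-1}\bigl((\ep v+w)-w\bigr)$ with $w,\ep v+w\in\Ga$, and \eqref{concl} then all follow with no bound on any $f^{(j)}$, no mean value theorem, and no path-length estimate; those quantitative facts enter only later, in the proof of \Cref{main:1}, to upgrade the conclusion of this proposition to membership in $\cC^\infty(X)$. I would either carry out the boundedness argument in full or, better, reorganize along these lines.
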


\begin{proof}
	That $f|_{\interior(X)}$ is smooth follows from Boman's theorem \ref{result:1}.

	There is $0 < \al \le 1$ such that $X \in \sH^\al(\R^d)$.
	Let $x \in \p X$. 
	We may assume that there is a truncated open cusp $\Ga = \Ga_d^{\al}(r,h)$ 
	and an open neighborhood $Y$ of $x$ in X
	such that for all 
	$y \in Y$ we have $y + \Ga \subseteq \interior (X)$. 	
	It suffices to show that $(f|_{Y \cap \interior (X)})'$ extends uniquely to a mapping 
	$f' : Y \to L(\R^d,\R)$ which belongs to $\cA^\infty(Y, L(\R^d,\R))$.

	Let $p < q$ be positive integers such that $p/q \le \al$ and $q/p \not\in \N$.
	Let $x \in Y$ and $v = (v',v_d) \in  \Ga$.  
	Then the curve 
	\[
		c_{x,v}(t) := x+ (t^{2q} v',t^{2p} v_d)
	\]
	lies in $\interior (X)$ for $0<|t|<1$ and 
	$c_{x,v}(0) = x$.  		 
	Since $f \in \cA^\infty(X)$, $f \o c_{x,v}$ is $\cC^\infty$.
	
	Let $v \in \Ga$ be fixed. We define 
	\[
		f'(x)(v) := \frac{(f \o c_{x,v})^{(2p)}(0)}{(2p)!} + \frac{(f \o c_{x,v})^{(2q)}(0)}{(2q)!}, \quad \text{ for } x \in Y.
	\]
	This definition turns into a correct statement if $x \in \interior(X)$, by \Cref{lem:Faa}.

	We claim that 
	\begin{equation} \label{smooth}
			\text{$f'(\cdot)(v) :  Y \to \R$ maps $\cC^\infty$-curves to $\cC^\infty$-curves.}
	\end{equation}
	Let $\R \ni s \mapsto x(s)$ be a $\cC^\infty$-curve in $Y$.  
	Then 
	$(s,t) \mapsto c_{x(s),v}(t)$ is a smooth mapping near $(0,0)$ with values in $X$. 
	Thus $(s,t) \mapsto f(c_{x(s),v}(t))$ is smooth, by Boman's theorem \ref{result:1}. So, in particular, 
	$s \mapsto \p_t^k|_{t=0}(f\o c_{x(s),v}(t))$ is smooth for all $k$. 
	It follows that  
	$s \mapsto  f'(x(s))(v)$ is smooth, which implies the claim.

	Let $s \mapsto x(s)$ be any $\cC^\infty$-curve in $Y$ such that $x(s) \in \interior(X)$ for $0<|s| \le 1$ and $x(0)=x_0$.
	Then 
	\begin{align*}
		f'(x_0)(v) &= \frac{(f \o c_{x_0,v})^{(2p)}(0)}{(2p)!} + \frac{(f \o c_{x_0,v})^{(2q)}(0)}{(2q)!}
		\\
		&= \lim_{s\to 0} \Big(\frac{(f \o c_{x(s),v})^{(2p)}(0)}{(2p)!} + \frac{(f \o c_{x(s),v})^{(2q)}(0)}{(2q)!} \Big)
		= \lim_{s\to 0} f'(x(s))(v).
	\end{align*}
	Consequently, the given definition of $f'(x_0)(v)$ is the only possible 
	extension of $f'(\cdot)(v)$ to $x_0$ which is continuous on $\cC^\infty$-curves.

	Now let $v \in \R^d$ be arbitrary. 
	Since $\Ga$ is open, there exist $\ep>0$ and $w \in \Ga$ such that $\ep v+w \in \Ga$.
	For all $x \in Y \cap \interior(X)$, we have
	\[
		f'(x)(v) =   \frac{f'(x)(\ep v + w) - f'(x)(w)}{\ep},
	\]
	and the right-hand side uniquely 
	extends to points $x_0 \in Y \cap \p X$ and satisfies \eqref{smooth}, by the arguments above.

	Thus, we define $f'(x_0)(v) := \lim_{s \to 0} f'(x(s))(v)$ for some $\cC^\infty$-curve $s \mapsto x(s)$ in $Y$ 
	with $x(0) = x_0$ and $x(s) \in \interior(X)$ for $0<|s|\le 1$.
	Then $f'(x_0)$ is linear as the pointwise limit of $f'(x(s)) \in L(\R^d,\R)$.
	The definition does not depend on the curve $x$, since it is the unique extension for $v \in  \Ga$.

	Let us finally show that $f' : Y \to L(\R^d,\R)$ belongs to $\cA^\infty(Y,L(\R^d,\R))$. 
	Let $x : \R \to Y$ be a $\cC^\infty$-curve and let $v \in \R^d$. 
	It suffices to show that $s \mapsto f'(x(s))(v)$ is smooth. 
	For $v \in  \Ga$ this follows from \eqref{smooth}. 
	For general $v$, $f'(x(s))(v)$ is a linear combination of 
	$f'(x(s))(v_1)$ and $f'(x(s))(v_2)$ for $v_i \in  \Ga$
	which locally is independent of $s$. The proof is complete.
\end{proof}

\begin{corollary}
	\label{cor:Krieglcusp} 
	Let $M=(M_k)$ be a non-quasianalytic weight sequence.
	Let $X \in \sH(\R^d)$  
	and $f \in \cA_M^\infty(X)$.
	Then $f|_{\interior(X)}$ is smooth
	and its derivative $(f|_{\interior (X)})'$ extends uniquely to a mapping 
	$f' : X \to L(\R^d,\R)$ which belongs to $\cA_M^\infty(X, L(\R^n,\R))$, i.e.,
	\begin{equation} \label{conclMinfty}
		(f')_* \cC^M(\R,X) \subseteq \cC^\infty(\R,L(\R^d,\R)).
	\end{equation}
\end{corollary}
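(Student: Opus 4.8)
The plan is to run the proof of \Cref{Krieglcusp} essentially verbatim, replacing ``$\cC^\infty$-curve'' by ``$\cC^M$-curve'' at every place where a curve is fed into $f$, and supplying two extra ingredients that make the substitution legitimate: the stronger form of Boman's theorem recorded in \Cref{rem:Boman}, and the stability of $\cC^M$ under composition (valid since $M$ is a weight sequence). First I would dispose of the preliminary claim: since $f\in\cA_M^\infty(X)$ we have $f_*\cC^M(\R,\interior(X))\subseteq\cC^\infty(\R,\R)$, so $f|_{\interior(X)}$ is smooth by the version of Boman's theorem in \Cref{rem:Boman}. Next one notes that every test curve used in the proof of \Cref{Krieglcusp} is polynomial, namely $c_{x,v}(t)=x+(t^{2q}v',t^{2p}v_d)$, hence a fortiori of class $\cC^M$; after composing with a $\cC^M$-cutoff in $t$ (possible because $M$ is non-quasianalytic) so that the image stays in $X$, we get $f\o c_{x,v}\in\cC^\infty$, now from the hypothesis $f\in\cA_M^\infty(X)$. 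With this the pointwise formula
\[
	f'(x)(v):=\frac{(f\o c_{x,v})^{(2p)}(0)}{(2p)!}+\frac{(f\o c_{x,v})^{(2q)}(0)}{(2q)!},\qquad x\in Y,\ v\in\Ga,
\]
and, for general $v$, $f'(x)(v):=\ep^{-1}\big(f'(x)(\ep v+w)-f'(x)(w)\big)$ with $\ep v+w,w\in\Ga$, define the same candidate extension as before, which on $\interior(X)$ coincides with the genuine derivative by \Cref{lem:Faa}.

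The one genuinely new point, and the step I expect to be the main obstacle, is the analogue of claim \eqref{smooth}: that $f'(\cdot)(v)$ maps $\cC^M$-curves in $Y$ to $\cC^\infty$-curves. Given a $\cC^M$-curve $s\mapsto x(s)$ in $Y$, the map $(s,t)\mapsto c_{x(s),v}(t)$ is $\cC^M$ (a composition of $\cC^M$ maps, legitimate because $M$ is a weight sequence) and takes values in $X$ near $(0,0)$. To conclude that $(s,t)\mapsto f(c_{x(s),v}(t))$ is $\cC^\infty$ I would use the plot version of \Cref{rem:Boman}: for any $\cC^M$-curve $\ga$ into the parameter domain, $f\o c_{x(\cdot),v}(\cdot)\o\ga$ is the composite of $f$ with the $\cC^M$-curve $c_{x(\cdot),v}(\cdot)\o\ga$ into $X$, hence smooth by $f\in\cA_M^\infty(X)$; since this holds for all such $\ga$, the plot $f\o c_{x(\cdot),v}(\cdot)$ is smooth on the (open) parameter domain by the Boman theorem of \Cref{rem:Boman}. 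Then $s\mapsto\p_t^k|_{t=0}f(c_{x(s),v}(t))$ is smooth for every $k$, and taking the relevant linear combination at $k=2p$ and $k=2q$ yields smoothness of $s\mapsto f'(x(s))(v)$.

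Everything else then goes through unchanged. Uniqueness of the extension to a boundary point $x_0$ follows by evaluating $f'(x(s))(v)$ along any $\cC^M$-curve $s\mapsto x(s)$ in $Y$ with $x(0)=x_0$ and $x(s)\in\interior(X)$ for $0<|s|\le1$ — such curves exist, e.g.\ the polynomial curve $s\mapsto x_0+(s^{2q}v',s^{2p}v_d)$ with $(v',v_d)\in\Ga$ — together with the continuity in $s$ just established; linearity of $f'(x_0)$ is inherited as a pointwise limit of linear maps, and the value is independent of the chosen curve because it is the unique extension for $v\in\Ga$. For arbitrary $v$ one reduces to the case $v\in\Ga$ by the local linear combination above. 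Finally $f'\in\cA_M^\infty(Y,L(\R^d,\R))$: for a $\cC^M$-curve $x:\R\to Y$ and any $v$, $s\mapsto f'(x(s))(v)$ is locally in $s$ a fixed linear combination of the smooth maps $s\mapsto f'(x(s))(v_i)$ with $v_i\in\Ga$. The local extensions constructed around the various boundary points of $X$ agree on overlaps by uniqueness and glue to the asserted global $f':X\to L(\R^d,\R)$, proving $(f')_*\cC^M(\R,X)\subseteq\cC^\infty(\R,L(\R^d,\R))$.
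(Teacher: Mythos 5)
Your proposal is correct and takes essentially the same approach as the paper, whose proof of this corollary consists precisely of the remark that one reruns the proof of \Cref{Krieglcusp} with $\cC^M$-curves, invoking \Cref{rem:Boman} and the fact that the test curves $c_{x,v}$ are polynomial and hence of class $\cC^M$. The details you supply (the plot version of \Cref{rem:Boman} and the stability of $\cC^M$ under composition for log-convex $M$) only make explicit what the paper leaves implicit.
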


\begin{proof}
	The proof is the same with the only difference that we use $\cC^M$-curves (thanks to \Cref{rem:Boman}); 
	note that the curves $c_{x,v}$ are polynomial 
	and thus of class $\cC^M$. 
\end{proof}

\begin{proof}[Proof of \Cref{main:1}]
	Let $f \in \cA^\infty(X)$ (resp.\ $f \in \cA_M^\infty(X)$). 
	\Cref{Krieglcusp} and \Cref{cor:Krieglcusp} imply by induction that 
	the Fr\'echet derivatives $(f|_{\interior(X)})^{(n)}$ of all orders have unique extensions 
	$f^{(n)} : X \to L_n(\R^d,\R)$ which satisfy 
	\begin{equation*}
		(f^{(n)})_* \cC^\infty(\R,X) \subseteq \cC^\infty(\R,L_n(\R^d,\R))
	\end{equation*}
	(resp.\ $(f^{(n)})_* \cC^M(\R,X) \subseteq \cC^\infty(\R,L_n(\R^d,\R))$).
	So $f$ satisfies \Cref{lem:converse}(3),  	
	since the $c^\infty$-topology of $X$ (resp.\ the final topology 
	on $X$ with respect to all $\cC^M$-curves in $X$) 
	coincides with the trace topology from $\R^d$, by \Cref{topology} (resp.\  \Cref{rem:topology}). 
	Thus $f \in \cC^\infty(X)$, by \Cref{lem:converse} and \Cref{prop:alpharegular}. 
\end{proof}

 \section{Arc-smooth functions on subanalytic sets} \label{sec:subanalytic}

The goal of this section is to prove \Cref{main:4}.

\subsection{Subanalytic sets}

Let $M$ be a real analytic manifold.
A subset $X$ of $M$ is called \emph{subanalytic} if for each $x \in M$ there is an open neighborhood $U$ of $x$ 
in $M$ such that $X \cap U$ is the projection of a relatively compact semianalytic subset of $M \times N$, 
where $N$ is a real analytic manifold. Recall that a subset $X$ of a real analytic manifold $M$ is semianalytic 
if  for each $x \in M$ there exist an open neighborhood $U$ of $x$ 
in $M$ and finitely many real analytic functions $f_{ij},g_{ij}$ on $U$ 
such that 
\[
X \cap U = \bigcup_i \bigcap_j \big\{f_{ij} = 0,\, g_{ij}>0\big\}.
\]
If $\dim M \le 2$, then the family of subanalytic sets in $M$ coincides with the family of semianalytic sets.
In higher dimensions the family of subanalytic sets is essentially larger.

Henceforth we restrict to the case $M= \R^d$.

 \begin{theorem}[Rectilinearization of subanalytic sets \cite{Hironaka73}, \cite{BM88}, \cite{Parusinski94}] 
\label{rectilinearization}
	Let $X \subseteq \R^d$ be closed subanalytic.  
	There exists a locally finite collection of real analytic mappings $\vh_\al : U_\al \to \R^d$ 
	such that each $\vh_\al$ is the composite of a finite sequence of local blow-ups with smooth centers and 
	\begin{enumerate}
		\item each $U_\al$ is diffeomorphic to $\R^d$ and there are compact subsets $K_\al \subseteq U_\al$ 
		such that $\bigcup_\al \vh_\al(K_\al)$ is a neighborhood of $X$ in $\R^d$,
		\item for each $\al$, $\vh_\al^{-1}(X)$ is a union of quadrants in $\R^d$. 
	\end{enumerate}
\end{theorem}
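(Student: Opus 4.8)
The plan is to reconstruct the Bierstone--Milman argument, deriving the statement from Hironaka's resolution of singularities together with the basic structure theory of subanalytic sets. The conclusion is local in nature: using paracompactness, it suffices, for every $a \in \R^d$, to produce \emph{finitely many} real analytic maps $\vh_{a,i} : U_{a,i} \to \R^d$ with each $U_{a,i}$ diffeomorphic to $\R^d$ and each a finite composite of local blow-ups with smooth centres, together with compact sets $K_{a,i} \subseteq U_{a,i}$ such that $\bigcup_i \vh_{a,i}(K_{a,i})$ is a neighbourhood of $a$ and each $\vh_{a,i}^{-1}(X)$ is a union of quadrants in $\R^d$. One then chooses a locally finite family of such points $a$ covering $\R^d$, shrinks the $K_{a,i}$ so that the family of images remains locally finite, and takes the union of all the resulting charts.

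Next I would recall the relevant structure theory: since $X$ is closed subanalytic, $X = \psi(N)$ for a proper real analytic map $\psi : N \to \R^d$ from a real analytic manifold with $\dim N = \dim X \le d$ (the uniformization theorem, itself proven together with rectilinearization by a joint induction on dimension using resolution; see \cite{BM88}); and by Gabrielov's theorem $\R^d \setminus X$ is again subanalytic, so the frontier $\p X$ is subanalytic of dimension $< d$. It is therefore enough to rectilinearize, simultaneously and compatibly, the analytic and semianalytic data that describe $X$ and $\p X$ near $a$.

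The heart of the matter is the semianalytic case, treated by induction on dimension. A relatively compact semianalytic germ is cut out by finitely many equalities and inequalities among real analytic functions $f_1, \dots, f_r$; applying embedded resolution of singularities to the product $f_1 \cdots f_r$ (keeping track of the accumulated exceptional divisor) yields, locally on the source, a finite composite of blow-ups with smooth centres after which every $f_i$ is, in suitable coordinates, a coordinate monomial times a nonvanishing real analytic unit. In such coordinates the sign conditions defining the set become a union of relatively open coordinate quadrants, and a further finite sequence of admissible blow-ups---the ``cutting'' step, using the fibre-cutting lemma and the inductive hypothesis in lower dimension---upgrades this to an honest union of quadrants. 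For the general subanalytic case one transports the problem to the source $N$ of $\psi$, where $X$ and $\p X$ are described by analytic and semianalytic data (pullbacks of subanalytic sets under proper analytic maps being subanalytic), rectilinearizes there by the semianalytic case, composes with $\psi$ after resolving its points of indeterminacy, and recovers properness by the usual compactness bookkeeping.

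The step I expect to be the main obstacle is exactly the passage from a normal-crossings description to a genuine union of quadrants while simultaneously keeping the source charts globally diffeomorphic to $\R^d$ and the family of compact images locally finite: resolution is available only locally and only as a composite of \emph{local} blow-ups, so one must patch these together and control the combinatorics of the exceptional divisors carefully enough that no spurious non-quadrant pieces survive. The references \cite{Hironaka73} and \cite{BM88} carry this out in full, and \cite{Parusinski94} supplies the uniform refinements; below we use only the stated conclusion.
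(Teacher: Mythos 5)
This is a quoted classical theorem: the paper gives no proof of it at all, but simply cites \cite{Hironaka73}, \cite{BM88}, and \cite{Parusinski94} and uses the statement as a black box, so there is no in-paper argument to compare yours against. Your outline is a fair roadmap of the Bierstone--Milman proof that those citations point to (localization plus local finiteness, the complement theorem, monomialization of the defining data by resolution of singularities, the fibre-cutting lemma and induction on dimension), and you are explicit that the hard steps are deferred to the references --- which is exactly the level at which the paper itself operates. One caveat on the sketch as written: the step ``rectilinearize on the source $N$ of the uniformization $\psi$ and then compose with $\psi$'' cannot be taken literally when $\dim X < d$, since the maps $\vh_\al$ are required to be composites of local blow-ups of open subsets of $\R^d$ and hence have $d$-dimensional sources; in the actual argument the blow-ups are performed in the \emph{target} $\R^d$, with the uniformizing parametrizations only supplying the analytic data to be monomialized. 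Since you ultimately invoke only the stated conclusion, nothing downstream is affected.
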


A \emph{quadrant} in $\R^d$ is a set 
\[
	Q(I_0,I_-,I_+) = \big\{ x \in \R^d : x_i = 0 \text{ if } I_0, \, x_i \le 0 
	\text{ if } I_-, \, x_i \ge 0 \text{ if } I_+\big\},
\]
where $I_0$, $I_-$, $I_+$ is any partition of $\{1,2,\ldots,d\}$.

\subsection{Bounded fat subanalytic sets are uniformly polynomially cuspidal} \label{sec:PP}
 
This is due to Paw{\l}ucki and Ple\'sniak \cite{PawluckiPlesniak86}. 
We recall some steps of the proof which will be needed later.

\begin{definition} \label{def:UPC}
A subset $X \subseteq \R^d$ is called \emph{uniformly polynomially cuspidal} 	
	if there exist positive constants $M,m>0$ and a positive integer $n$ such that 
	for all $x \in \ol X$ there is a polynomial curve $h_x : \R \to \R^d$ of degree at most $n$ with 
	the following properties:
	\begin{enumerate}
			\item $h_x((0,1]) \subseteq X$ and $h_x(0)=x$,
			\item $\on{dist}(h_x(t),\R^d \setminus X) \ge Mt^m$ for all $x \in X$ and all $t \in (0,1]$. 
		\end{enumerate}		
\end{definition}

\begin{remark}  
	Every compact set $X \in \sH(\R^d)$ is 
	uniformly polynomially cuspidal; this is clear by 
	\Cref{def:UPC}.  
	The converse is not true: 
	for instance, the sets in \Cref{ex:alphadomain}(2) and (3) are uniformly polynomially cuspidal but not in $\sH(\R^d)$. 
	The set $X$ in \Cref{ex:complementflatcusp} is uniformly polynomially cuspidal but neither subanalytic nor in $\sH(\R^d)$;
	cf.\ \cite[p.\ 284]{PawluckiPlesniak88}.
\end{remark}

\begin{theorem}[{\cite[Proposition 6.3]{PawluckiPlesniak86}}] \label{thm:PP}
	Let $X$ be a bounded open subanalytic subset of $\R^d$. 
	Then there is a map $h : \ol X \times \R \to \R^d$ such that $h(x,t)$ is a polynomial in $t$ with degree $n$
	independent of $x \in \ol X$ with $h(x,0) = x$ for all $x \in \ol X$,  
	$h(\ol X \times (0,1]) \subseteq X$, and 
	there exist positive constants $M,m$ such that 
	\[
		\on{dist}(h(x,t),\R^d \setminus X) \ge M t^m, \quad \text{ for all } x \in \ol X,\, t \in [0,1]. 
	\]
\end{theorem}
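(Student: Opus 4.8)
The plan is to pull $X$ back, via rectilinearization, to a union of quadrants, where the cuspidal curves can be written down explicitly, push them forward by the (polynomial) rectilinearizing maps, and recover the distance estimate by a \L{}ojasiewicz inequality. First, apply \Cref{rectilinearization} to $\ol X$ (closed subanalytic, since $X$ is bounded and subanalytic): this gives real analytic maps $\vh_\al : U_\al \to \R^d$, each a composite of finitely many local blow-ups and hence, in suitable coordinates on $U_\al \cong \R^d$, polynomial of degree at most some fixed $n$, together with compact $K_\al \subseteq U_\al$ such that $\bigcup_\al \vh_\al(K_\al)$ is a neighbourhood of $\ol X$ and each $\vh_\al^{-1}(\ol X)$ is a union of quadrants. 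Boundedness of $X$ lets me keep only finitely many indices $\al = 1,\dots,N$, and I enlarge each $K_\al$ slightly to a semianalytic compact $K_\al'$ with $K_\al \subseteq \interior K_\al' \subseteq U_\al$; set $\delta_0 := \min_\al \on{dist}(K_\al, \R^d \setminus K_\al') > 0$. A connectedness argument now yields the key dichotomy: for every full-dimensional quadrant $Q$ in the decomposition of $\vh_\al^{-1}(\ol X)$ one has $\vh_\al(\interior Q) \subseteq \ol X$, so $A := \interior Q \cap \vh_\al^{-1}(X)$ and $\interior Q \setminus A = \interior Q \cap \vh_\al^{-1}(\p X)$ are respectively open and relatively closed in $\interior Q$, whence $A$ is clopen in the connected set $\interior Q$ and therefore $A = \emptyset$ or $A = \interior Q$. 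Consequently $\ol{\vh_\al^{-1}(X)}$ is a union of closures of open quadrants $\interior Q$ with $\interior Q \subseteq \vh_\al^{-1}(X)$.

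\emph{The curves.} Given $x \in \ol X$, I would choose (by a limiting argument along a sequence in $X$ tending to $x$, using the finitely many charts) an index $\al$ and a point $\tilde x \in K_\al$ with $\vh_\al(\tilde x) = x$ lying in the closure of an open quadrant $\interior Q \subseteq \vh_\al^{-1}(X)$; let $e \in \{-1,1\}^d$ be the sign vector of $Q$. The affine curve $\ga(t) := \tilde x + (\delta_0/\sqrt d)\, t\, e$ then lies in $\interior Q$ for $t > 0$, remains in $K_\al'$ for $t \in [0,1]$, and satisfies $\on{dist}(\ga(t), \R^d \setminus \interior Q) = \min_i |\ga(t)_i| \ge (\delta_0/\sqrt d)\, t$ because $\tilde x_i e_i \ge 0$ for all $i$. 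Putting $h(x,t) := \vh_\al(\ga(t))$ I obtain a polynomial in $t$ of degree at most $n$ (independent of $x$) with $h(x,0) = x$ and $h(\ol X \times (0,1]) \subseteq X$, since $\ga((0,1]) \subseteq \interior Q \subseteq \vh_\al^{-1}(X)$.

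\emph{The distance estimate.} It remains to prove $\on{dist}(h(x,t), \R^d \setminus X) \ge M t^m$. I would work inside a single chart with the globally defined function $\Theta := \on{dist}(\cdot,\, \R^d \setminus X)$, which is subanalytic near $\ol X$ because $\R^d \setminus X$ is closed subanalytic and the distance to a bounded subanalytic set is subanalytic. On the compact $K_\al'$ the two continuous subanalytic functions $g_\al := \Theta \circ \vh_\al$ and $f_\al := \on{dist}(\cdot,\, K_\al' \setminus \vh_\al^{-1}(X))$ vanish on exactly the same set $K_\al' \setminus \vh_\al^{-1}(X)$, so the \L{}ojasiewicz inequality provides $C_\al, m_\al > 0$ with $f_\al^{m_\al} \le C_\al\, g_\al$ there. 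Since $\interior Q \subseteq \vh_\al^{-1}(X)$ forces $K_\al' \setminus \vh_\al^{-1}(X) \subseteq \R^d \setminus \interior Q$, we get $f_\al(\ga(t)) \ge \on{dist}(\ga(t), \R^d \setminus \interior Q) \ge (\delta_0/\sqrt d)\, t$, and hence
\[
	\on{dist}(h(x,t), \R^d \setminus X) = g_\al(\ga(t)) \ge C_\al^{-1}\big((\delta_0/\sqrt d)\, t\big)^{m_\al}.
\]
Taking $m := \max_\al m_\al$ and absorbing the finitely many constants (using $0 < t \le 1$) produces a uniform $M > 0$, which completes the proof; in particular $\ol X$ is uniformly polynomially cuspidal in the sense of \Cref{def:UPC}.

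I expect this last step to be the main obstacle. A naive Lipschitz estimate is useless, since blow-ups collapse distances and $|\vh_\al(a) - \vh_\al(b)|$ is not bounded below by a multiple of $|a - b|$, and tracking $\R^d \setminus X$ across overlapping charts is awkward. The device is to never leave a single chart: transport the problem to the model where $\vh_\al^{-1}(X)$ is a union of quadrants, so that the distance to the complement becomes the purely linear quantity $\min_i |\ga(t)_i|$, and then use a \L{}ojasiewicz inequality for the pulled-back subanalytic distance function to convert it back into a polynomial lower bound that is automatically uniform in $x$. The subsidiary technical point is the bookkeeping in the setup — the connectedness dichotomy above — which is what guarantees that $\vh_\al^{-1}(X)$ is genuinely a union of full-dimensional open quadrants.
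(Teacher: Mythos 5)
Your overall strategy (rectilinearize, use the quadrant structure to write down explicit cuspidal rays, and transfer the distance estimate by a {\L}ojasiewicz inequality) is the same as the paper's, which packages the first two steps into \Cref{cor:PP}; your {\L}ojasiewicz step is sound (the paper compares $\on{dist}(\vh_j(y,t),\R^d\setminus X)$ directly with $t$ rather than with a second distance function, but the two are interchangeable), and your quadrant dichotomy is a reasonable hands-on substitute for \Cref{cor:PP}. But there is a genuine gap at the very place where the curves are produced: you assert that each $\vh_\al$, being a composite of finitely many local blow-ups with smooth centers, is ``in suitable coordinates on $U_\al\cong\R^d$ polynomial of degree at most some fixed $n$''. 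This is not justified and is false in general. The centers of the blow-ups are analytic submanifolds (not coordinate subspaces of the original chart), the charts of the blow-ups are glued by analytic coordinate changes, and the identification $U_\al\cong\R^d$ in \Cref{rectilinearization}(1) is only a diffeomorphism; the composite $\vh_\al$ is real analytic but not polynomial. Consequently your $h(x,t)=\vh_\al(\tilde x+(\delta_0/\sqrt d)\,t\,e)$ is a real analytic curve in $t$, not a polynomial of degree $n$ independent of $x$ --- and the uniform polynomial degree is precisely what the theorem asserts and what is used later (e.g.\ in the proof of \Cref{main:4}, where $h_x(t)-x=t^k\tilde h_x(t)$ with $k$ bounded).

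The missing step is exactly the second half of the paper's argument: write $\vh_\al(\ga_x(t))=T_x(t)+t^{n+1}Q_x(t)$ with $T_x$ the Taylor polynomial of degree $n\ge m$ at $t=0$ and $Q_x$ real analytic, depending analytically on the base point ranging over a compact set. Your {\L}ojasiewicz estimate gives $\on{dist}(\vh_\al(\ga_x(t)),\R^d\setminus X)\ge Lt^m$; by compactness choose $\de\in(0,1]$ with $\sup|t\,Q_x(t)|\le L/2$ for $t\in[0,\de]$, so that $\on{dist}(T_x(t),\R^d\setminus X)\ge Lt^m-\tfrac{L}{2}t^{n}\ge\tfrac{L}{2}t^m$ there, and finally set $h(x,t):=T_x(\de t)$ to renormalize to $t\in[0,1]$. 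Without this truncation-and-remainder argument your construction proves a weaker statement (existence of uniformly cuspidal \emph{analytic} arcs) but not the theorem as stated.
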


We give a sketch of the proof in order to explicate the uniformity of $h_x$ which will be of 
importance later.

The following is a corollary of the rectilinearization theorem.

\begin{proposition}[{\cite[Proposition 6.3]{PawluckiPlesniak86}}] \label{cor:PP}
	Let $X$ be a relatively compact subanalytic subset of $\R^d$ of pure dimension $d$.
	Then there is a finite number of real analytic maps $\vh_j : \R^d \times \R \to \R^d$ 
	such that, for $I^d := [-1,1]^d$,  
	\begin{gather*}
		\vh_j(I^d \times (0,1]) \subseteq X \quad \text{ for all }j,
		\\ 
		\bigcup_j \vh_j(I^d \times \{0\}) = \ol X.
	\end{gather*}
\end{proposition}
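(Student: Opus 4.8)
The plan is to derive this directly from the rectilinearization theorem (\Cref{rectilinearization}), applied to the closed subanalytic set $\ol X$.

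First I would record two elementary consequences of pure-dimensionality. The set $\ol X$ is again a compact subanalytic set of pure dimension $d$: if some $x \in \ol X$ had $\dim_x \ol X < d$, then a small neighborhood $U$ of $x$ would satisfy $\dim(X \cap U) \le \dim(\ol X \cap U) < d$ while $X \cap U \ne \emptyset$, contradicting purity. Moreover $\ol X \setminus X$ is subanalytic of dimension $< d$: indeed $\interior(X)$ is dense in $X$ (its complement in $X$ is subanalytic with empty interior in $\R^d$, hence of dimension $< d$) and therefore dense in $\ol X$, which forces $\dim(\ol X \setminus X) < d$ as well.

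Next I would apply \Cref{rectilinearization} to $\ol X$, arranging in addition — by the standard compatibility of rectilinearization with a finite family of subanalytic sets — that each $\vh_\al^{-1}(\ol X \setminus X)$ is also a union of quadrants; these are automatically quadrants of dimension $< d$. Since $X$ is relatively compact, finitely many indices $\al$ already yield $\bigcup_\al \vh_\al(K_\al) \supseteq \ol X$; I discard the rest, identify each $U_\al$ with $\R^d$, and (composing each $\vh_\al$ with a scaling diffeomorphism of $\R^d$ and then enlarging $K_\al$) assume $K_\al = I^d$. Now fix an index $\al$ and a $d$-dimensional quadrant $Q = Q(\emptyset, I_-, I_+)$ occurring in the quadrant decomposition of $\vh_\al^{-1}(\ol X)$. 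Its interior in $\R^d$ consists of the points all of whose coordinates are nonzero, so it is disjoint from $\vh_\al^{-1}(\ol X \setminus X)$ (a union of quadrants, each having some coordinate identically zero); hence $\vh_\al(\interior Q) \subseteq \ol X \setminus (\ol X \setminus X) = X$. Writing $\si_i = +1$ for $i \in I_+$ and $\si_i = -1$ for $i \in I_-$, I then define the polynomial map $g_{\al,Q}(u,t) := (\si_i(u_i^2 + t))_{i=1}^d$ and set $\vh_{\al,Q} := \vh_\al \o g_{\al,Q} : \R^d \times \R \to \R^d$, which is real analytic. For $u \in I^d$ and $t \in (0,1]$ one has $\si_i\,(g_{\al,Q}(u,t))_i = u_i^2 + t > 0$, so $g_{\al,Q}(u,t) \in \interior Q$ and hence $\vh_{\al,Q}(I^d \times (0,1]) \subseteq \vh_\al(\interior Q) \subseteq X$; and $g_{\al,Q}(I^d \times \{0\}) = Q \cap I^d$, so $\vh_{\al,Q}(I^d \times \{0\}) = \vh_\al(Q \cap I^d)$.

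It remains to check that the finitely many maps $\vh_{\al,Q}$, indexed by such pairs $(\al,Q)$, cover $\ol X$ at $t = 0$. Put $C := \bigcup_{(\al,Q)} \vh_\al(Q \cap I^d)$; it is a finite union of compact sets and is contained in $\ol X$, so $\ol X \setminus C$ is relatively open in $\ol X$. Every point of $\ol X$ lies in some $\vh_\al(I^d)$, hence in $\vh_\al(Q' \cap I^d)$ for some quadrant $Q'$ in the decomposition of $\vh_\al^{-1}(\ol X)$; if $Q'$ is $d$-dimensional the point lies in $C$, so $\ol X \setminus C$ is contained in the finite union of the sets $\vh_\al(Q' \cap I^d)$ with $\dim Q' < d$, each of dimension $< d$ (a proper real-analytic map does not raise dimension). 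A relatively open subset of the pure-$d$-dimensional set $\ol X$ that has dimension $< d$ must be empty, so $C = \ol X$. Relabelling the $\vh_{\al,Q}$ as $\vh_1,\dots,\vh_N$ finishes the proof. I expect the only genuinely delicate point to be the bookkeeping in the previous paragraph — that one may rectilinearize $\ol X$ and $\ol X \setminus X$ simultaneously, so that the interiors of the top-dimensional quadrants land in $X$ rather than merely in $\ol X$ — together with the dimension count just used; everything else is formal.
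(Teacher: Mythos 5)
The paper gives no proof of \Cref{cor:PP}: it is quoted from \cite[Proposition 6.3]{PawluckiPlesniak86} and merely described as ``a corollary of the rectilinearization theorem''. Your argument supplies that derivation, and it is correct; the substitution $(u,t)\mapsto(\si_i(u_i^2+t))_i$, which sends $I^d\times(0,1]$ into the open quadrant and $I^d\times\{0\}$ onto $Q\cap I^d$, together with the closing dimension count (the images of the lower-dimensional quadrants have dimension $<d$, and a nonempty relatively open subset of the pure-dimensional set $\ol X$ cannot have dimension $<d$), is exactly the standard route and essentially the one in Paw{\l}ucki--Ple\'sniak. The preparatory reductions are also fine: purity of $\ol X$, $\dim(\ol X\setminus X)<d$ via density of $\interior(X)$, discarding all but finitely many charts by compactness and local finiteness, and rescaling so that $K_\al=I^d$ (a dilation, so the quadrant structure is preserved). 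The one step that goes beyond the literal statement of \Cref{rectilinearization} as quoted in the paper is the \emph{simultaneous} rectilinearization of $\ol X$ and $\ol X\setminus X$ — needed precisely because $X$ is only assumed relatively compact, so $\ol X\setminus X$ need not be closed and one must ensure the interiors of the top-dimensional quadrants land in $X$ rather than merely in $\ol X$. This multi-set version for arbitrary (not necessarily closed) subanalytic sets is standard and available in the cited sources, but since \Cref{rectilinearization} is stated for a single closed set you should say explicitly which form of the theorem you are invoking; with that reference made precise, the proof is complete.
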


Let $X$ be a bounded open subanalytic subset of $\R^d$. 
Let $\vh_j$ be the maps provided by \Cref{cor:PP}. 
Then, for each $j$, the function 
\[
	I^d \times [0,1] \ni (y,t) \mapsto \on{dist}(\vh_j(y,t), \R^d \setminus X)	
\] 
is subanalytic (cf.\ \cite[Remark 3.11]{BM88}). 
By the {\L}ojasiewicz inequality (cf.\ \cite[Theorem 6.4]{BM88}), there exist positive constants $L,m$ such that 
\[
	\on{dist}(\vh_j(y,t), \R^d \setminus X) \ge L t^{m}, \quad  (y,t) \in I^d\times [0,1]. 
\] 
The constants $L$, $m$ may be assumed independent of $j$, by taking the minimum and maximum, respectively. 
Choose an integer $n \ge m$ and write
\[
	\vh_j(y,t)  = T_j(y,t) + t^{n+1} Q_j(y,t), \quad (y,t) \in \R^d \times \R,  
\]
where $T_j(y,\cdot)$ is the Taylor polynomial at $0$ of degree $n$ of $\vh_j(y,\cdot)$ and $Q_j : \R^d \times \R \to \R^d$ 
is real analytic. If we choose $\de \in (0,1]$ such that 
$|tQ_j(y,t)| \le L/2$ for all $j$, $y \in I^d$, and $t \in [0,\de]$, then
\[
	\on{dist}(T_j(y,t), \R^d \setminus X) \ge L t^m - \frac{L}{2} t^n \ge \frac{L}{2} t^m, \quad (y,t) \in I^d \times [0,\de]. 
\]
Replacing $t$ by $\de t$, we obtain
\[
	\on{dist}(T_j(y,\de t), \R^d \setminus X) \ge  M t^m, \quad (y,t) \in I^d \times [0,1], 
\]
where $M:= L \de^m/2$. Clearly, $\bigcup_j T_j(I^d \times \{0\}) = \bigcup_j \vh_j(I^d \times \{0\}) = \ol X$. 
\Cref{thm:PP} follows.

\subsection{Fat closed subanalytic sets are \texorpdfstring{$m$}{m}-regular}

Another property of fat closed subanalytic sets we need is the fact that they are \emph{$m$-regular} 
in the following sense.

\begin{theorem}[{\cite[Theorem 6.17]{Bierstone80a}, \cite{Hardt83}, \cite[Theorem 6.10]{BM88}}] \label{thm:subanreg} 
	Let $X \subseteq \R^d$ be a fat closed subanalytic set. 
	For each $a \in X$ there exist a compact neighborhood $K$ in $X$, a constant $C>0$, 
	and a positive integer $m$ such that any two points 
	$x,y \in K$ can be joined by a semianalytic path $\ga$ in $X$ which intersects $\p X$ in 
	at most finitely many points and satisfies 
	\[
		\ell(\ga) \le C \,|x-y|^{1/m}.
	\]
\end{theorem}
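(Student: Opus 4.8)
The statement to prove is that a fat closed subanalytic set $X\subseteq\R^d$ is $m$-regular: locally, any two points can be joined by a semianalytic path in $X$ of length at most $C|x-y|^{1/m}$, meeting $\p X$ in finitely many points. This is a classical result (Bierstone, Hardt, Bierstone--Milman), so my plan is to reconstruct a proof along the lines of the references cited, rather than invent something new.

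The plan is to proceed in three steps. \emph{First}, I would reduce to a local statement: fix $a\in X$, pick a relatively compact subanalytic neighborhood, and work inside it; the problem is now about a bounded closed subanalytic set which is fat (equal to the closure of its interior). \emph{Second}, I would invoke the machinery already available in the excerpt, namely that bounded fat subanalytic sets are uniformly polynomially cuspidal (\Cref{thm:PP}): there is a map $h:\ol X\times\R\to\R^d$ with $h(x,0)=x$, $h(\ol X\times(0,1])\subseteq\interior(X)$, and $\on{dist}(h(x,t),\R^d\setminus X)\ge Mt^m$, with $h(x,\cdot)$ polynomial of fixed degree $n$. Given two points $x,y\in X$, the idea is to build the path $\ga$ as a concatenation of three pieces: the polynomial arc $t\mapsto h(x,t)$ from $x$ to the interior point $p:=h(x,\ep)$, then a path inside $\interior(X)$ from $p$ to $q:=h(y,\ep)$, then the reversed polynomial arc $t\mapsto h(y,\ep-t)$ from $q$ back to $y$. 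The two cusp arcs each meet $\p X$ only at the single endpoint $x$ (resp.\ $y$); the middle arc lies entirely in the open set $\interior(X)$.

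The main obstacle is the middle segment and the length bookkeeping. One must join $p$ and $q$ inside $\interior(X)$ by a semianalytic (in fact piecewise-analytic) path whose length is controlled by a power of $|x-y|$. Here the key quantitative input is the tube estimate: since $\on{dist}(h(x,\ep),\R^d\setminus X)\ge M\ep^m$ and likewise for $q$, both $p$ and $q$ sit at distance $\gtrsim\ep^m$ from the boundary. If $|x-y|$ is small and we choose $\ep$ comparable to a suitable power of $|x-y|$, one can arrange that the straight segment $[p,q]$ (or a short polygonal detour) stays in $\interior(X)$, because $|p-q|\le|p-x|+|x-y|+|y-q|\lesssim\ep+|x-y|+\ep$ and the distance of $[p,q]$ to $\p X$ stays positive provided $\ep^m$ dominates $|p-q|$; balancing $\ep^{?}\sim|x-y|$ against $\ep^m$ forces a choice $\ep\sim|x-y|^{\si}$ for an appropriate exponent, after which the total length $\ell(\ga)\lesssim\ep+|x-y|\lesssim|x-y|^{\si}$ gives the claimed H\"older bound with a (possibly larger) integer $m'$ in place of $m$. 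Making this balancing rigorous, and checking that the polynomial arcs $h(x,\cdot)$ themselves have length $O(\ep)$ (immediate, since $h$ is real analytic on a compact set, hence Lipschitz in $t$ uniformly in $x$), is the technical heart; subanalyticity of all the sets and functions involved, together with the {\L}ojasiewicz inequality, is what guarantees the uniform constants. Finiteness of $\ga\cap\p X$ is automatic since $\ga$ is a finite concatenation of real analytic arcs, each either contained in $\interior(X)$ or meeting $\p X$ only at an endpoint. Finally, one reduces the general (not necessarily locally small $|x-y|$) case to the small case by a compactness/covering argument on the compact neighborhood $K$.
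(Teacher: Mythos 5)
First, note that the paper does not actually prove \Cref{thm:subanreg}: it is quoted verbatim from the literature (Bierstone, Hardt, Bierstone--Milman), so there is no in-text argument to compare against. Judged on its own, your reconstruction has a genuine gap in the middle segment. The clearance guaranteed by \Cref{thm:PP} at $p=h(x,\ep)$ is only $M\ep^m$, while the displacement of the cusp arc itself is $|h(x,\ep)-x|\le L\ep$ and is genuinely of order $\ep$ in the worst case (already for $\{0\le v\le u^2\}$ one has $\on{dist}(h_x(t),\R^2\setminus X)\sim t^2$ but $|h_x(t)-x|\sim t$). Hence $|p-q|$ can be of order $\ep$, and the inequality you need, $2L\ep+|x-y|<M\ep^m$, forces $\ep$ to be bounded \emph{below} when $m\ge 2$; since the total length of your path is at least of order $\ep$, no choice $\ep\sim|x-y|^{\si}$ can simultaneously keep the segment inside the ball $B(p,M\ep^m)$ and make $\ell(\ga)\to 0$ as $|x-y|\to 0$. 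The balancing you describe therefore cannot be carried out.

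The obstruction is moreover not just quantitative. Take $X=\{(u,v)\in\R^2:|v|\le u^2\}$, which is fat, closed and semialgebraic, and the boundary points $x=(\de,\de^2)$, $y=(-\de,\de^2)$: every path in $X$ joining them must pass through the origin, so no straight segment or short polygonal detour between interior points $p$, $q$ on the two lobes stays in $\interior(X)$, no matter how the endpoints' distance to $\p X$ is estimated. What must be controlled is the \emph{inner} geometry of $\interior(X)$ between the two cusp arcs, and the UPC property plus endpoint clearance does not see it (nor can you slide one cusp arc to the other, since $h(x,t)$ in \Cref{thm:PP} need not depend continuously on $x$). The cited proofs use different machinery: subanalyticity of the geodesic distance together with the {\L}ojasiewicz inequality, or a Lipschitz (L-regular) cell decomposition with quasiconvex pieces. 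It is instructive that the paper's own \Cref{lem:quasiconvex}, which does use \Cref{thm:Lreg}, only achieves the weaker bound $\ell(\ga)\le C\on{diam}(U)$ and needs the extra hypothesis that $X$ is simple --- precisely because converting such cell-by-cell control into a bound by a power of $|x-y|$ is the hard part of \Cref{thm:subanreg}.
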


\subsection{L-regular decomposition}
Let us recall the \emph{L-regular decomposition} of subanalytic sets.

First we introduce sets which are \emph{L-regular} with respect to a given system of coordinates.
Let $X\subseteq \R^d$ be a subanalytic set of dimension $d$.
If $d =1$, then $X$ is called L-regular, if $X$ is a non-empty compact interval.
If $d >1$, then $X$ is L-regular, if it is of the form
\begin{equation} \label{eq:Lreg1}
	X= \big\{(x',x_d) \in \R^d : f(x') \le x_d \le g(x'), \, x \in X'\big\},
\end{equation}
where $X' \subseteq \R^{d-1}$ is L-regular and $f$, $g$ are continuous subanalytic functions on $X'$,
analytic and satisfying $f<g$ on $\interior (X')$ with bounded partial derivatives of first order.
If $\dim X = k < d$, then $X$ is L-regular, if 
\begin{equation} \label{eq:Lreg2}
	X = \big\{(y,z) \in \R^k \times \R^{d-k} : z= h(y),\, y \in X'\big\},
\end{equation}
where $X' \subseteq \R^k$ is L-regular, $\dim X'=k$, 
and 
$h$ is continuous subanalytic on $X'$,
analytic on $\interior (X')$ with bounded partial derivatives of first order.

In general a subanalytic set $X$ in $\R^d$ is said to be \emph{L-regular}, if it is L-regular with respect to 
some linear (or equivalently orthogonal) system of coordinates.
It is called an \emph{L-regular cell}, if it is the relative interior of an L-regular set, i.e.,
it is $\interior (X)$
in case \eqref{eq:Lreg1} and
the graph of $h$ restricted to $\interior (X')$ in case \eqref{eq:Lreg2}. 
By definition, every point is a zero-dimensional L-regular cell. 

It is well-known that L-regular sets and L-regular cells are \emph{quasiconvex} (cf.\ \cite{Kurdyka92},
\cite[Lemma 2.2]{Parusinski94L}, or \cite{KurdykaParusinski06}): 
there is a constant $C>0$ such that
any two points $x,y$ in the set can be joined in the set by a subanalytic path of length $\le C\, |x-y|$.

\begin{theorem}[{\cite{Kurdyka92}, \cite{KurdykaParusinski06}, \cite{Pawlucki08}}]\label{thm:Lreg}  
	Let $X \subseteq \R^d$ be a bounded subanalytic set.
	Then $X$ is a finite disjoint union of L-regular cells.
\end{theorem}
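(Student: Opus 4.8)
The plan is to prove the statement by induction on the ambient dimension $d$: every bounded subanalytic subset of $\R^d$ is a finite disjoint union of L-regular cells, the inductive step being reduced, via a generic coordinate projection, to the same statement in dimension $d-1$. For $d=1$ this is immediate, since a bounded subanalytic subset of $\R$ is a finite union of points and bounded open intervals, a bounded open interval is the relative interior of a compact interval — hence an L-regular cell — and points are zero-dimensional L-regular cells.

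For the inductive step I would first use the structure theory of subanalytic sets to write $X\subseteq\R^d$ as a finite disjoint union of connected subanalytic $\cC^\om$ submanifolds, so that it suffices to decompose a single such stratum $S$, of some dimension $k$; I run a secondary induction on $k$, with $k=0$ trivial. If $k=d$, then $S$ is open; taking $p\colon\R^d\to\R^{d-1}$ to be the projection forgetting the last coordinate for a generic choice of that coordinate direction, $p|_{\ol S}$ becomes proper with finite fibres, and for $y$ outside a subanalytic set $B\subseteq p(S)$ of dimension $<d-1$ the fibre $S\cap p^{-1}(y)$ consists of a fixed finite number of open intervals with subanalytic endpoint functions. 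Applying the inductive hypothesis in dimension $d-1$ to $p(S)\setminus B$ yields finitely many L-regular cells $C$; shrinking each $C$ by a subanalytic subset of dimension $<d-1$ to make the combinatorics constant and the endpoint functions analytic, $p^{-1}(C)\cap S$ is a finite disjoint union of ``sandwiches'' $\{f(x')<x_d<g(x')\}$ and graphs with $f,g$ continuous subanalytic and analytic on $C$, which — granting the derivative bounds discussed below — are L-regular cells of the form \eqref{eq:Lreg1}. The remaining pieces (the preimage of $B$, the shrinking sets, and the non-generic part of $S$) form a bounded subanalytic set of dimension $<k$ and are absorbed by the secondary induction. The case $k<d$ is handled the same way, using instead a generic coordinate projection $\R^d\to\R^k$ that is finite on $\ol S$ and realising $S$, over the complement of a lower-dimensional subanalytic set, as a finite disjoint union of graphs $z=h(y)$ with $h$ analytic on the interiors of L-regular cells furnished by the inductive hypothesis in dimension $k<d$, so as to obtain cells of the form \eqref{eq:Lreg2}.

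The step I expect to be the main obstacle is ensuring that the functions $f,g$ (resp.\ $h$) have \emph{bounded} first-order partial derivatives on the relevant cell interiors, i.e.\ that the tangent spaces of $S$ remain uniformly transverse to the chosen projection direction; without this one only gets a decomposition into graphs, not into L-regular cells. This is exactly the requirement that the projection be chosen ``regular'', and it is the point where subanalyticity is used essentially: I would cover $\ol S$ by finitely many subanalytic charts, in each of which some orthonormal coordinate system has its last direction (resp.\ its $\R^{d-k}$-factor) bounded away from every tangent space of $S$ on that chart, whence the implicit description of $S$ forces the derivative bounds. The existence of such a finite family of regular charts can be extracted from the rectilinearization theorem~\ref{rectilinearization}, or from a subanalytic preparation theorem, or from a {\L}ojasiewicz inequality controlling vertical tangencies, and is essentially the content of the cited work of Kurdyka, Kurdyka--Parusi\'nski, and Paw\l ucki. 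Finiteness of the overall decomposition follows from compactness of $\ol X$ together with local finiteness and finiteness of connected components of subanalytic sets, and disjointness is automatic since each cell is a relative interior and the lower-dimensional remainders are peeled off at every stage.
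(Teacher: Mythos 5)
The paper does not prove \Cref{thm:Lreg}; it is quoted verbatim from the cited sources (Kurdyka, Kurdyka--Parusi\'nski, Paw{\l}ucki), so there is no in-paper argument to compare yours against. Your sketch does follow the standard route of those references --- induction on dimension via generic coordinate projections, with the real content concentrated in the choice of ``regular'' directions --- and you have correctly identified the bounded-derivative condition as the crux. But as a proof it has two structural gaps beyond the (acknowledged) deferral of the transversality lemma.

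First, the order of operations in the inductive step does not work as written. A single generic projection direction for all of $S$ will in general \emph{not} keep the tangent spaces of the relevant strata uniformly away from the vertical: the partition of $\ol S$ into finitely many pieces, each assigned its own orthonormal system in which transversality holds with a uniform bound (the finite-cover-of-the-Grassmannian argument), must come \emph{before} the projection, and different pieces then project along different directions, so ``apply the inductive hypothesis to $p(S)\setminus B$'' has to be run separately per piece and the inductive hypothesis must be strengthened to produce a decomposition of $\R^{d-1}$ \emph{compatible with} (refining) a prescribed finite family of subanalytic subsets --- the discriminant, the bad set $B$, and the loci where the combinatorics or the derivative bounds change. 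Relatedly, when $S$ is open the bounded-derivative requirement in \eqref{eq:Lreg1} concerns the functions $f,g$ whose graphs lie in the $(d-1)$-dimensional boundary strata, so it is the tangent spaces of those boundary strata, not of $S$, that must be controlled. Second, you cannot ``shrink each $C$ by a subanalytic subset of dimension $<d-1$'' after obtaining the L-regular cells of the base: an L-regular cell minus a lower-dimensional subanalytic set is no longer an L-regular cell (nor the relative interior of an L-regular set), so this repair destroys exactly the property you are inducting on. The shrinking data must be fed into the base decomposition beforehand, which is again the compatible-refinement form of the statement. Both points are standard and repairable, and they are precisely where the cited works do their work; but as the proposal stands, the induction does not close.
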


For the proof of \Cref{main:4}
we need the following preparatory results.

\begin{lemma} \label{lem:sequenceofcells}
	Let $[a,b] \subseteq \R$ be a non-trivial interval such that $[a,b] = \bigcup_{i=1}^k F_i$ for 
	closed sets $F_i$. If $a \le \sup F_i < b$ then there exists $j \ne i$ such that $\sup F_i \in F_j$ 
	and $\sup F_i < \sup F_j$. 
\end{lemma}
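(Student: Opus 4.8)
The statement is a small combinatorial fact about finite closed covers of an interval, so the plan is to argue directly by looking at the point $s := \sup F_i$ and the structure of the cover near $s$. First I would record that since $a \le s < b$, every right-neighborhood $(s, s+\varepsilon)$ meets $[a,b]$, and by definition of the supremum, $F_i \cap (s, s+\varepsilon) = \emptyset$ for all $\varepsilon > 0$. Hence for every $n$ with $1/n$ small enough that $s + 1/n \le b$, the point $s + 1/n$ lies in $[a,b] = \bigcup_{\ell=1}^k F_\ell$ but not in $F_i$, so it lies in some $F_{j(n)}$ with $j(n) \ne i$. Since there are only finitely many indices, by pigeonhole there is a fixed index $j \ne i$ with $s + 1/n \in F_j$ for infinitely many $n$.

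Next I would let $n \to \infty$ along that subsequence: the points $s + 1/n \in F_j$ converge to $s$, and $F_j$ is closed, so $s \in F_j$. This already gives an index $j \ne i$ with $s = \sup F_i \in F_j$. Finally, to get the strict inequality $\sup F_i < \sup F_j$, observe that $F_j$ contains points $s + 1/n > s$ for arbitrarily large $n$, so $\sup F_j \ge s + 1/n > s = \sup F_i$ for any such $n$; in particular $\sup F_j > \sup F_i$. This completes the argument.

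There is essentially no obstacle here: the only point requiring a little care is the edge case handling near the right endpoint $b$ (one must ensure the approximating points $s + 1/n$ stay inside $[a,b]$, which is exactly where the hypothesis $s < b$ is used), and the use of finiteness of the cover to extract a single index $j$ via the pigeonhole principle. Both are routine. The lemma will then be invoked in the proof of \Cref{main:4} to organize the L-regular cells of a subanalytic set along an interval into a chain with increasing suprema.
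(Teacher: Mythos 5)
Your argument is correct and is essentially identical to the paper's proof: both pick a sequence of points in $(s,b)$ converging to $s=\sup F_i$, note these cannot lie in $F_i$, use finiteness of the cover to extract a single index $j\ne i$ containing a subsequence, and conclude $s\in F_j$ by closedness, with $\sup F_j>\sup F_i$ following since $F_j$ contains points strictly greater than $s$. No gaps; the edge-case care near $b$ and the pigeonhole step are exactly the points the paper also (implicitly) relies on.
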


\begin{proof}	
	Fix $i$ and 
	suppose that $t := \sup F_i < b$. There is a sequence $(t,b) \ni t_n \to t$. 
	After passing to a subsequence we may assume that $t_n \in F_j$ for some fixed $j \ne i$. 
	Since $F_j$ is closed, $t \in F_j$. 
\end{proof}

\begin{lemma} \label{lem:quasiconvex}
	Let $X \subseteq  \R^d$ be a fat closed subanalytic set. Let $x \in \p X$ and suppose there is a 
	basis of neighborhoods $\sU$ of $x$ such that $U \cap \interior(X)$ is connected for all $U \in \sU$. 
	Then there is $U_0 \in \sU$ and a positive constant $C$ such that the following holds.
	For all $U \in \sU_0 := \{U \in \sU : U \subseteq U_0\}$ and 
	for any two points $y,z \in U \cap \interior(X)$, there exists a 
	rectifiable path $\ga$ in $\interior(X)$ which connects $y$ and $z$ and satisfies
	\[
		\ell(\ga) \le C  \on{diam}(U).
	\] 
\end{lemma}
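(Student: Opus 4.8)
The plan is to combine the L-regular decomposition (\Cref{thm:Lreg}) with the quasiconvexity of L-regular cells and a connectedness argument to link any two points of $U \cap \interior(X)$ by a path of length comparable to $\operatorname{diam}(U)$. First I would choose $U_0 \in \sU$ small enough that $U_0$ is relatively compact and $X \cap \overline{U_0}$ decomposes (by \Cref{thm:Lreg}, after intersecting with a large cube) as a finite disjoint union of L-regular cells $\Gamma_1,\ldots,\Gamma_N$, with associated quasiconvexity constant $C_0$; that is, any two points in the closure $\overline{\Gamma_i}$ of a single cell can be joined in $\overline{\Gamma_i}$ by a subanalytic path of length $\le C_0 |x-y| \le C_0 \operatorname{diam}(U)$. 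Since $\interior(X) \cap U_0$ is connected and is covered by the top-dimensional cells (the lower-dimensional ones cannot contribute to the interior), the cells $\overline{\Gamma_i}$ of dimension $d$ that meet $\interior(X) \cap U_0$ form, via their pairwise intersections, a connected ``adjacency graph''. This is where I would use the hypothesis that $U \cap \interior(X)$ is connected for all $U \in \sU$: it ensures that for every $U \in \sU_0$ the portion of each relevant cell that lies in $U$ is glued together through overlaps inside $\interior(X) \cap U$, so that no bounded number of cell-hops is obstructed.

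Next, given $y,z \in U \cap \interior(X)$, I would produce a chain $\Gamma_{i_0}, \Gamma_{i_1}, \ldots, \Gamma_{i_\ell}$ of cells with $y \in \overline{\Gamma_{i_0}}$, $z \in \overline{\Gamma_{i_\ell}}$, consecutive closures intersecting, and $\ell \le N$; pick intermediate points $p_k \in \overline{\Gamma_{i_{k-1}}} \cap \overline{\Gamma_{i_k}}$, all lying inside $U$ (shrinking $U_0$ if necessary so that the relevant intersection loci are contained in $U$ whenever a cell meets $U$). Inside each cell, quasiconvexity yields a subanalytic path from $p_{k-1}$ to $p_k$ of length $\le C_0 \operatorname{diam}(U)$. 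Concatenating the $\le N+1$ pieces gives a path $\gamma$ of length $\le (N+1) C_0 \operatorname{diam}(U) =: C \operatorname{diam}(U)$. The one remaining point is that this $\gamma$ must be pushed into $\interior(X)$: since the endpoints $y,z$ and the junction points $p_k$ can all be taken in $\interior(X)$ (the $p_k$ may a priori sit on lower-dimensional faces, but one perturbs them slightly into $\interior(X)$ using that $X$ is fat, paying only a controlled additional length), and since a subanalytic path in a fat closed subanalytic set meets $\p X$ in at most finitely many points (as in \Cref{thm:subanreg}), one removes those finitely many bad points by short detours through $\interior(X)$ — here \Cref{lem:sequenceofcells} is the combinatorial device that keeps the detour bookkeeping finite. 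The length estimate is unaffected up to enlarging $C$.

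The main obstacle I anticipate is making the ``chain of cells stays inside $U$'' step honest: a priori a cell $\Gamma_i$ could meet both $U$ and points far from $x$, so its quasiconvex connecting path, while short relative to $\operatorname{diam}(\Gamma_i)$, need not be short relative to $\operatorname{diam}(U)$. The fix is to work not with the global cells but with the local germs at $x$: after replacing $U_0$ by a still smaller neighborhood, each cell $\overline{\Gamma_i}$ meeting $U \in \sU_0$ contains $x$ in its closure (cells not containing $x$ in their closure can be excluded by shrinking $U_0$ below the distance from $x$ to their closures), and for such a cell the quasiconvexity bound $C_0 |y-z|$ with $y,z \in U$ directly gives $C_0 \operatorname{diam}(U)$ — no need for the path to stay in $U$, only for its length to be controlled. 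Thus the real content is: (i) only finitely many cells are relevant near $x$, (ii) their adjacency graph restricted to those that meet $\interior(X)$ is connected (forced by connectedness of $U \cap \interior(X)$), and (iii) quasiconvexity plus fatness turn each hop into a short interior path. I would carry out (i) and (iii) as routine, and devote the bulk of the argument to (ii).
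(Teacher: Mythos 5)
Your toolkit is the right one (L-regular decomposition plus quasiconvexity of cells, with per-hop lengths bounded by $C_0\,\on{diam}(U)$ because the endpoints of each hop lie in $U$), but the central step — producing the chain of cells together with junction points that actually lie in $U\cap\interior(X)$ — is where your route has a genuine gap. Connectivity of an abstract adjacency graph (``closures of cells intersect'') does not give you usable junction points: $\ol{\Ga_{i_{k-1}}}\cap\ol{\Ga_{i_k}}$ need not meet $U$ at all for small $U\in\sU_0$, and even when it does (e.g.\ when both closures contain $x$, so that $x$ itself is a candidate), the junction point may sit on $\p X$ or on a lower-dimensional face. Your proposed fix — perturb the junction points into $\interior(X)$ using fatness, then remove finitely many boundary crossings by detours — is exactly the place where the argument can fail: to perturb a common closure point into \emph{both} cells by interior points that are \emph{close to each other} is essentially a local connectedness statement about $\interior(X)$ near that point, which fatness alone does not provide (this is precisely what the simpleness hypothesis is there to supply, cf.\ \Cref{ex:simple}). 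So the proposal defers the real content to step (ii) without a mechanism that closes it.

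The paper's proof avoids all of this by \emph{starting} from an actual path: since $U\cap\interior(X)$ is open and connected, it is path-connected, so one takes $\si:[0,1]\to U\cap\interior(X)$ from $y$ to $z$, decomposes $\interior(X)$ (not $X$, so every cell and every quasiconvex connecting path automatically lies in $\interior(X)$ and no boundary-avoidance detours are needed) into cells $A_1,\dots,A_k$, and applies \Cref{lem:sequenceofcells} to the sets of limit points of $\si^{-1}(A_i)$ to extract finitely many transition times $t_j$. Each transition point $z_j=\si(t_j)$ lies in $U\cap\interior(X)$ and is a limit of points of two consecutive cells, so a small ball $B_j\subseteq U\cap\interior(X)$ around it meets both cells; the points $z_j^\pm$ chosen there are the junctions, they lie in $U$, and quasiconvexity within each cell then gives hops of length $\le C_j\,\on{diam}(U)$. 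Note also that \Cref{lem:sequenceofcells} plays this chain-extraction role, not the ``detour bookkeeping'' role you assign it, and that lower-dimensional cells of the decomposition of $\interior(X)$ do occur and are used in the chain, so restricting attention to top-dimensional cells is not quite right either. If you replace your adjacency-graph step by this path-following localization, the rest of your outline goes through.
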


\begin{proof}
	We may assume that $X$ is bounded, by intersecting with a ball centered at $x$. 
	Let $U_0$ be any member of $\sU$ which is contained in this ball.
	By \Cref{thm:Lreg}, $\interior (X)$ is a finite disjoint union of L-regular cells $\{A_1,\ldots,A_k\}$. 

	Fix $U \in \sU_0$ and let $y,z \in U \cap \interior(X)$.
	Since $U \cap \interior(X)$ is 
	connected, there is a path $\si : [0,1] \to U \cap \interior(X)$ with $\si(0) = y$ and $\si(1) = z$. 
	Then we have a finite disjoint union $[0,1] = \bigcup_{i=1}^k E_i$, where $E_i := \si^{-1}(A_i)$. 

	Let $E_i'$ be the set of limit points of $E_i$. 
	Then $[0,1] = \bigcup_{i=1}^k E_i'$. 
	Let $i_1 \in \{1,\ldots,k\}$ be such that $t_0 := 0 \in E_{i_1}'$.   
	If $t_1 < 1$, then there exists $i_2 \in \{1,\ldots,k\}\setminus \{i_1\}$ such that $t_1 \in E_{i_2}'$ 
	and $t_2 := \sup E_{i_2}' > t_1$, by \Cref{lem:sequenceofcells}. 
	Moreover, $[t_1,b] = \bigcup_{j\ne i_1} E_{j}' \cap [t_1 \cap b]$. 
	If $t_2 < b$ we may
	apply \Cref{lem:sequenceofcells} again and find $i_3 \in \{1,\ldots,k\} \setminus \{i_1,i_2\}$ 
	such that $t_2 \in E_{i_3}'$ 
	and $t_3 := \sup E_{i_3}' > t_2$.
	This procedure ends after finitely many steps and gives a finite partition $0 = t_0 < t_1 < \cdots < t_{h-1} < t_h = 1$ of $[0,1]$. The points 
	$y=z_0,z_1,\ldots,z_h=z$, where $z_j = \si(t_j)$, all lie in $U \cap \interior(X)$. 
	Let $\ep >0$ be sufficiently small such that the balls $B_j := B(z_j,\ep)$ are all contained in $U \cap \interior(X)$. 
	For all $j = 1,2,\ldots,h-1$, 
	there exist $z_j^- \in B_j \cap A_{i_j}$ and $z_j^+ \in B_j \cap A_{i_{j+1}}$, by construction. 
	Additionally, there exist $z_0^+ \in B_0 \cap A_{i_1}$ and $z_h^- \in B_h \cap A_{i_h}$.

	Since the cells are quasiconvex, 
	for all $j = 1,2,\ldots,h$, there exist rectifiable paths $\ga_j \in A_{i_j}$ joining $z_{j-1}^+$ and $z_j^-$ such 
	that 
	\[
		\ell(\ga_j) \le C_j \, |z_{j-1}^+ - z_j^-|,
	\]	
	where the constant $C_j$ depends only on $A_{i_j}$. 
	Joining the paths $\ga_j$ with the line segments $[z_0,z_0^+]$, $[z_j^-,z_j^+]$, for $j = 1,\ldots,h-1$, and 
	$[z_h^-,z_h]$, we obtain a rectifiable path $\ga$ in $\interior(X)$ 
	which connects $y$ and $z$ and has length
	\[
		\ell(\ga) \le C  \on{diam}(U),
	\]
	for a constant $C$ which depends only on the $C_j$ and the number of cells $k$, 
	since all points $z_j$, $z_j^-$, $z_j^+$ lie in $U$.
\end{proof}

\subsection{Proof of \texorpdfstring{\Cref{main:4}}{Theorem 1.14}}

	The inclusion $\cC^\infty(X) \subseteq \cA^\infty(X)$ is clear.

	Let $f \in \cA^\infty(X)$. Then $f$ is smooth in $\interior (X)$, 
	by \Cref{result:1}. 
	We must show that $f \in \cC^\infty(X)$. This is a local problem, so we may assume 
	without loss of generality that $X$ is compact (by intersecting with a suitable ball).
	By \Cref{lem:converse} and \Cref{thm:subanreg}, it suffices to show that 
	$f$ satisfies \Cref{lem:converse}(3).

	Fix $x \in \p X$.
	By \Cref{thm:PP}, there is a polynomial curve $h_x : \R \to \R^d$ of degree at most $n$ with 
	the following properties:
	\begin{enumerate}
			\item $h_x((0,1]) \subseteq \interior (X)$ and $h_x(0)=x$,
			\item $\on{dist}(h_x(t),\R^d \setminus X) \ge Mt^m$ for all $t \in (0,1]$, 
	\end{enumerate}
	where $n,M,m$ are independent of $x$ and $t$.
	Then there is a positive integer $k=k(x)$ such that $h_x(t) - x = t^k \tilde h_x(t)$, where $\tilde h_x(0) \ne 0$. 
	Set $v_1 := \tilde h_x(0)/|\tilde h_x(0)| \in S^{d-1}$. 
	Choose $d-1$ directions $v_2,\ldots, v_d \in S^{d-1}$ such that $v_1,v_2,\ldots, v_d$ are linearly independent and define 
	\[
		\Ps_{x,v}(t_1,t_2,\ldots,t_d) := h_x(t_1) + t_2 v_2 + \cdots + t_d v_d  
	\] 
	for $t = (t_1,\ldots,t_d)$ in the set
	\[
		Y :=\big\{(t_1,\ldots,t_d) \in \R^d : t_1 \in (0,\de),\, |t_j| < C t_1^m \text{ for } 2\le j \le d\big\}.
	\]
	If $C:= M/(2(d-1))$ and $\de>0$ is chosen small enough, 
	then $\Ps_{x,v}$ is a diffeomorphism of $Y$ onto the open subset $H_{x,v} := \Ps_{x,v}(Y)$ 
	of $\interior (X)$ and it extends to a homeomorphism between $Y\cup\{0\}$ and $H_{x,v} \cup \{x\}$; indeed, by (2),
	\begin{align*}
		\on{dist}(\Ps_{x,v}(t),\R^d \setminus X) &\ge \on{dist}(h_x(t_1),\R^d \setminus X) - |t_2| - \cdots - |t_d|
		\\
		& > M t_1^m - (d-1) C t_1^m = \frac{M}{2} t_1^m >0,
	\end{align*}
	for $t \in Y$. Since $f$ is smooth in $\interior (X)$, we have 
	\begin{equation} \label{eq:suban1}
	 	\p_{t_2}^{\al_2} \cdots \p_{t_d}^{\al_d} (f \o \Ps_{x,v})(t) =  d_{v_2}^{\al_2} \cdots d_{v_d}^{\al_d} f(\Ps_{x,v}(t)), 
	 	\quad \text{ for all } t \in Y,\, \al_j\ge0. 
	 \end{equation} 
	The left-hand side of \eqref{eq:suban1} extends continuously to $t=0$, since $f \o \Ps_{x,v} \in \cA^\infty(\ol Y)$ 
	and $\cA^\infty(\ol Y)  = \cC^\infty(\ol Y)$, by \Cref{main:1}, as $\ol Y$ is a H\"older set. 
	Since $\Ps_{x,v}$ is a homeomorphism $Y\cup \{0\} \to H_{x,v} \cup \{x\}$, 
	we may conclude that the directional derivatives 
	$d_{v_2}^{\al_2} \cdots d_{v_d}^{\al_d} f$, $\al_j\ge0$, extend continuously from $H_{x,v}$ to $x$. 
	
	If we perturb the directions $v_2,\ldots, v_d$ a little such that $v_1,v_2,\ldots,v_d$ remain
	linearly independent and take the intersection $H_x$ of the corresponding sets $H_{x,v}$, 
	then $H_x$ still is an open subset of $\interior (X)$ with $h_x(t) \in H_x$ for small $t>0$ and $x \in \ol H_x$. 
	Then $d_{w_2}^{\al_2} \cdots d_{w_d}^{\al_d} f$, $\al_j\ge0$, extend continuously from $H_{x}$ to $x$
	for all $w_2,\ldots,w_d$ near $v_2,\ldots, v_d$.
	By \Cref{lem:converse}, 
	we infer that the Fr\'echet derivatives $f^{(p)}$ of all orders of $f$ extend continuously from 
	$H_x$ to $x$.

	Thus for all $x \in \p X$ and $p \in \N$, 
	we have a candidate for the Fr\'echet derivative $f^{(p)}(x)$ of $f$ at $x$ and an open set 
	$H_x \subseteq \interior (X)$ on which $f^{(p)}(y)$ tends to this candidate as $y \to x$. 
	It remains to prove that
	the thus defined extension of $f^{(p)}$ to $X$ is continuous on $X$. 
	First we show that it is bounded.

	\begin{claim} \label{claim1}
		For all $p \in \N$, $f^{(p)}$ is bounded on $X$.
	\end{claim}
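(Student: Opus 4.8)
The plan is to argue by contradiction and by induction on $p$: assume $f^{(j)}$ is bounded on $X$ for all $j<p$ (the base case $p=0$ is handled the same way, the multilinear-algebra input below being then vacuous), and suppose $f^{(p)}$ is unbounded on the compact set $X$. Since $f|_{\interior(X)}$ is smooth, $f^{(p)}$ is continuous, hence locally bounded, on $\interior(X)$, and for $x\in\p X$ the value $f^{(p)}(x)$ was defined as a limit of values of $f^{(p)}$ along $H_x\subseteq\interior(X)$; consequently there is a sequence $y_n\in\interior(X)$ with $\|f^{(p)}(y_n)\|\to\infty$, and after passing to a subsequence $y_n\to x$ for some $x\in\p X$, with $\on{dist}(y_n,\p X)\to 0$. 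Because $f^{(p)}(y_n)$ is a symmetric $p$-linear form, polarization provides unit vectors $w_n\in S^{d-1}$ with $|f^{(p)}(y_n)(w_n,\dots,w_n)|\ge c_d\,\|f^{(p)}(y_n)\|$, for a constant $c_d>0$ depending only on $p$ and $d$.

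The heart of the matter is then to manufacture a single smooth curve in $X$ witnessing this blow-up. Since $X$ is simple, \Cref{lem:quasiconvex} applies at $x$ and, combined with \Cref{thm:subanreg}, lets one join points of $\interior(X)$ close to $x$ by paths in $\interior(X)$ whose length is controlled by their mutual distance. Using this I would select a subsequence $y_{n_k}\to x$ converging fast, connect consecutive points $y_{n_k}$ and $y_{n_{k+1}}$ by paths in $\interior(X)$ of length $\to 0$ that accumulate at $x$, and smooth the resulting chain by the parametrization technique of \cite[Lemma 2.8]{KM97} used in the proof of \Cref{topology}, obtaining a smooth curve $c:\R\to X$ with $c(s_\infty)=x$. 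Near each $y_{n_k}$ the curve is prescribed as $c(s_k+t)=y_{n_k}+a_k\,\ps(t)\,w_{n_k}$ for a fixed bump $\ps$ with $\ps(0)=0$, $\ps'(0)=1$, and small support, where $a_k\to 0$ is chosen below $\on{dist}(y_{n_k},\p X)$ (so the wiggle stays in $\interior(X)$) and small enough for the smoothing to succeed; then $c(s_k)=y_{n_k}$, $c'(s_k)=a_k w_{n_k}$, and $c^{(\ell)}(s_k)=O(a_k)$ for every $\ell\ge1$. Because $\|f^{(p)}(y_n)\|\to\infty$ with no constraint on the rate, after passing to a further subsequence we may moreover require $a_k^{\,p}\,\|f^{(p)}(y_{n_k})\|\to\infty$.

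Finally, $f\o c$ is smooth since $f\in\cA^\infty(X)$ and $c\in\cC^\infty(\R,X)$, hence $(f\o c)^{(p)}$ is bounded near $s_\infty$. On the other hand, the Fa\`a di Bruno formula gives
\[
 (f\o c)^{(p)}(s_k)=f^{(p)}(y_{n_k})\big(c'(s_k),\dots,c'(s_k)\big)+R_k,
\]
where every summand of $R_k$ carries a factor $f^{(j)}(y_{n_k})$ with $j<p$ together with a product of derivatives of $c$ at $s_k$ at least one of which has order $\ge 2$; by the inductive hypothesis $\|f^{(j)}(y_{n_k})\|$ is bounded and each $c^{(\al)}(s_k)=O(a_k)$, so $R_k\to 0$, while $|f^{(p)}(y_{n_k})(c'(s_k),\dots,c'(s_k))|=a_k^{\,p}\,|f^{(p)}(y_{n_k})(w_{n_k},\dots,w_{n_k})|\ge c_d\,a_k^{\,p}\,\|f^{(p)}(y_{n_k})\|\to\infty$. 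Thus $(f\o c)^{(p)}(s_k)\to\infty$, contradicting the boundedness of $(f\o c)^{(p)}$ near $s_\infty$, and \Cref{claim1} follows.

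The step I expect to be the main obstacle is the construction of the curve $c$: it must run through all the $y_{n_k}$, reach $x$ from inside $X$, and have the prescribed first-order behaviour at $s_k$ together with all higher derivatives of size $O(a_k)$, while being genuinely $\cC^\infty$ at the accumulation parameter $s_\infty$. This requires combining the simplicity of $X$ (via \Cref{lem:quasiconvex}) and its uniform cuspidality / regularity near $x$ (via \Cref{thm:PP} and \Cref{thm:subanreg}) with the smoothing construction of \cite[Lemma 2.8]{KM97}, taking care that neither the connecting paths nor the wiggles ever leave $\interior(X)$.
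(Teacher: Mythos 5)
Your strategy of detecting the blow-up of $f^{(p)}$ by a single smooth curve through the points $y_{n_k}$ has a quantitative gap that I do not see how to close. The contribution of the top-order term in Fa\`a di Bruno is $a_k^{\,p}\,f^{(p)}(y_{n_k})(w_{n_k},\dots,w_{n_k})$, so you need $a_k^{\,p}\,\|f^{(p)}(y_{n_k})\|\to\infty$; at the same time the wiggle $y_{n_k}+a_k\ps(t)w_{n_k}$ must stay in $X$, and since $w_{n_k}$ is dictated by polarization (it may point straight at the nearest boundary point), this forces $a_k\le\on{dist}(y_{n_k},\p X)$. These two requirements are incompatible in general: both $\on{dist}(y_{n_k},\p X)$ and $\|f^{(p)}(y_{n_k})\|$ are attached to the same points, and nothing known at this stage of the proof rules out, say, $\|f^{(p)}(y)\|\sim\on{dist}(y,\p X)^{-\ep}$ for a small $\ep>0$, in which case $\on{dist}(y_n,\p X)^p\,\|f^{(p)}(y_n)\|\to0$ along the whole sequence and hence along every subsequence. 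Passing to a subsequence only helps against constraints you can tune (such as the decay needed for smoothness at the accumulation parameter); it cannot help against this one. Choosing a better direction does not help either: near a cuspidal boundary point of a fat subanalytic set there is no open cone of admissible directions, only the polynomial cusp of \Cref{thm:PP}, whose transverse width again tends to $0$ at a rate unrelated to the blow-up of $f^{(p)}$. (A secondary issue: smoothing a chain of merely rectifiable connecting paths from \Cref{lem:quasiconvex} into a $\cC^\infty$ curve in $X$ that is flat at the accumulation parameter is not provided by \cite[Lemma 2.8]{KM97}, which handles polygons through a rapidly converging sequence.)

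The paper avoids any such rate comparison by arguing qualitatively. Using \Cref{cor:PP} it writes $x_\ell=\vh_{j_0}(y_\ell,0)$, builds the smooth curve upstairs through the $y_\ell$ in the cube $I^d$ (where polygons cause no trouble), and then considers the three-parameter family $g(s,t_1,t_2)=f(\vh_{j_0}(c(s),t_1)+t_2v)$ on a H\"older set in $\R^3$. Since $g$ is arc-smooth there, \Cref{main:1} gives that $g$ is $\cC^\infty$ on that set, so $\p_{t_2}^pg$ is continuous up to the boundary stratum $\{t_1=0\}$ and in particular bounded; evaluating at $(1/\ell,0,0)$ yields boundedness of $d_v^pf(x_\ell)$ for an open set of directions $v$, hence of $f^{(p)}(x_\ell)$, with no need to outpace the putative blow-up. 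If you want to repair your argument, this is the mechanism to adopt: derive boundedness of the directional derivatives from the already-established H\"older-set theorem rather than from a Fa\`a di Bruno lower bound along a curve.
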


	Let $p \in \N$ be fixed. It suffices to show that 
	$f^{(p)}$ is bounded on $\interior (X)$ (since $X$ is fat).
	For contradiction suppose that there is a sequence $(x_\ell)$ in $\interior (X)$ such that 
	$\|f^{(p)}(x_\ell)\|_{L_p} \to \infty$. 
	Since $X$ is compact, we may assume that $x_\ell \to x$. Then $x \in \p X$, 
	since we already know that $f$ is smooth on $\interior(X)$.

	By \Cref{cor:PP}, there is a finite number of real analytic maps $\vh_j : \R^d \times \R \to \R^d$ such that 
	\begin{gather*}
		\vh_j(I^d \times (0,1]) \subseteq \interior (X) \quad \text{ for all }j,
		\\ 
		\bigcup_j \vh_j(I^d \times \{0\}) = X,
	\end{gather*}
	where $I^d := [-1,1]^d$. 
	After passing to a subsequence we may assume that $x_\ell \in \vh_{j_0}(I^d \times \{0\})$ for all $\ell$ 
	and some $j_0$. 
	Choose $y_\ell \in I^d$ such that $\vh_{j_0}(y_\ell,0) = x_\ell$. Since $I^d$ is compact, 
	after passing to a subsequence we may assume that $y_\ell \to y$ and in turn that this convergence is 
	faster than any polynomial. 
	The infinite polygon through the points $y_\ell$ and $y$ can be parameterized by a smooth curve $c : \R \to I^d$ 
	such that $c(1/\ell) = y_\ell$ and $c(0) = y$ (cf.\ \cite[Lemma 2.8]{KM97}). 
	Then $s \mapsto \vh_{j_0}(c(s),0)$ is a smooth curve in $X$ through the points $x_\ell$ and $x$. 

	Since $\vh_{j_0}$ is real analytic, for small $t_1$ we have $\vh_{j_0}(y,t_1) = x + t_1^k \tilde \vh_{j_0}(y,t_1)$ 
	for some positive integer $k$ and a real analytic map $\tilde \vh_{j_0}$ with $\tilde \vh_{j_0}(y,0) \ne 0$. 
	Then $\tilde \vh_{j_0}(z,t_1) \ne 0$ for $(z,t_1)$ in a neighborhood of $(y,0)$.
	Thus,  
	\[
		v_1(z,t_1) := 
		\begin{cases}
			\frac{\p_{t_1}\vh_{j_0}(z,t_1)}{|\p_{t_1} \vh_{j_0}(z,t_1)|} 
		 = \frac{k  \tilde \vh_{j_0}(z,t_1) + t_1 \tilde \vh_{j_0}'(z,t_1) }
		 {|k  \tilde \vh_{j_0}(z,t_1) + t_1 \tilde \vh_{j_0}'(z,t_1)|}	
		 & \text{ if } t_1>0,
		 \\
		 \frac{  \tilde \vh_{j_0}(z,0) }{|  \tilde \vh_{j_0}(z,0) | }
		 & \text{ if } t_1=0,
		\end{cases}
	\] 
	is continuous in $(z,t_1)$, where $t_1\ge 0$, near $(y,0)$.   
	It follows that we can find an open set of directions $v \in S^{d-1}$ such that 
	$v_1(c(s),0)$ and $v$ are linearly independent for $s$ near $0$.
	For such $v$,
	\[
		(s,t_1,t_2) \to f\big(\vh_{j_0}(c(s),t_1) + t_2 v\big)
	\] 
	is smooth for small $s \in \R$, $t_1 \ge 0$, and $|t_2| \le C t_1^m$, by the arguments in 
	\Cref{sec:PP} and the considerations in the first part of the proof.
	But this implies that $d_v^p f(x_\ell)$ is bounded for all such $v$, and hence $f^{(p)}(x_\ell)$ is bounded, 
	a contradiction. \Cref{claim1} is proved.

	\begin{claim} \label{claim2}
		The Fr\'echet derivatives $f^{(p)}$, $p \in \N$, are continuous on $X$.
	\end{claim}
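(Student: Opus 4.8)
The plan is to upgrade the boundedness furnished by \Cref{claim1} to continuity, the decisive new input being the quasiconvexity of $\interior(X)$ near boundary points supplied by \Cref{lem:quasiconvex} --- this is precisely where the hypothesis that $X$ is simple gets used. Fix $p \in \N$; continuity of $f^{(p)}$ at points of $\interior(X)$ is automatic since $f$ is smooth there, so fix $x \in \p X$ and $\ep>0$. First I would bound the oscillation of $f^{(p)}$ inside $\interior(X)$ near $x$. Since $X$ is simple, $x$ has a basis of neighborhoods $\sU$ with $U \cap \interior(X)$ connected for all $U \in \sU$, so \Cref{lem:quasiconvex} provides $U_0 \in \sU$ and $C>0$ such that, whenever $U \in \sU$ and $U \subseteq U_0$, any two points $y,z \in U \cap \interior(X)$ can be joined by a rectifiable path $\ga$ in $\interior(X)$ with $\ell(\ga) \le C\on{diam}(U)$. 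By \Cref{claim1} applied to the order $p+1$, the number $B := \sup_X \|f^{(p+1)}\|_{L_{p+1}(\R^d,\R)}$ is finite, and, since $f$ is smooth on $\interior(X)$, integrating the identity $\tfrac{d}{dt}f^{(p)}(\ga(t)) = f^{(p+1)}(\ga(t))(\ga'(t))$ along an arc-length parameterization of $\ga$ yields
\[
	\|f^{(p)}(y) - f^{(p)}(z)\|_{L_p(\R^d,\R)} \le B\,\ell(\ga) \le BC\,\on{diam}(U), \qquad y,z \in U \cap \interior(X).
\]

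It then remains to push this estimate onto the values of $f^{(p)}$ on $\p X$ that were defined in the proof of \Cref{main:4} as limits along open sets $H_z \subseteq \interior(X)$, one for each $z \in \p X$, with $z \in \ol{H_z}$. Choose $U \in \sU$ with $U \subseteq U_0$ and $BC\on{diam}(U) < \ep$, and set $V := U \cap X$, a neighborhood of $x$ in $X$. For $x' \in V \cap \interior(X)$, pick $y_n \in H_x \cap U$ with $y_n \to x$ (possible because $x \in \ol{H_x}$ and $U$ is open), put $y = y_n$, $z = x'$ in the displayed inequality, and let $n \to \infty$ using $f^{(p)}(y_n) \to f^{(p)}(x)$; for $x' \in V \cap \p X$, additionally pick $z_n \in H_{x'} \cap U$ with $z_n \to x'$ and pass to the limit in $\|f^{(p)}(y_n) - f^{(p)}(z_n)\| \le BC\on{diam}(U)$. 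Either way $\|f^{(p)}(x') - f^{(p)}(x)\| \le BC\on{diam}(U) < \ep$, so $f^{(p)}$ is continuous at $x$, proving \Cref{claim2}.

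The step I expect to be the genuine obstacle is the oscillation estimate: a crude mean-value bound along straight segments is useless because $\interior(X)$ need not be locally convex near $\p X$, so one really needs short connecting paths that stay inside $\interior(X)$ --- that is, \Cref{lem:quasiconvex}, which rests on the L-regular decomposition (\Cref{thm:Lreg}) and on $X$ being simple. Everything after that is routine: reconciling the ordinary derivative on $\interior(X)$ with the limit-along-$H_{x'}$ description of the boundary values, and a harmless double limit.
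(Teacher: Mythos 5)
Your proposal is correct and follows essentially the same route as the paper: both arguments combine \Cref{lem:quasiconvex} (short rectifiable paths in $\interior(X)$, which is where simpleness enters), the boundedness of $f^{(p+1)}$ from \Cref{claim1}, the fundamental theorem of calculus along those paths, and approximation of boundary values via the sets $H_z$. The only difference is presentational — you phrase it as an oscillation estimate over small neighborhoods, the paper phrases it with pairs of sequences converging to $x$.
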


	Let $x \in \p X$ and suppose that $(x_n)$ and $(y_n)$ are two sequences in 
	$\interior (X)$ both converging to $x$. 
	By \Cref{lem:quasiconvex}, for each $\ep>0$ there exists $n_0 \in \N$ such that for all $n \ge n_0$ 
	the points $x_n$ and $y_n$ can be joined by a rectifiable path $\ga_n$ in $\interior(X)$ with 
	length $\ell(\ga_n) \le \ep$.
	Since $f$ is smooth in $\interior (X)$, we may apply 
	the fundamental theorem of calculus and \Cref{claim1} to conclude 
	\[
	\|f^{(p)}(x_n) - f^{(p)}(y_n)\|_{L_p} \le \Big(\sup_{z \in \ga_n} \|f^{(p+1)}(z)\|_{L_{p+1}}\Big) \, \ell(\ga_n) \to 0 
	\quad \text{ as } n \to \infty. 
	\]
	If we assume that the sequence $(x_n)$ lies in $H_x$, we obtain that $f^{(p)}(y) \to f^{(p)}(x)$ for all 
	$\interior (X) \ni y \to x$. 
	Finally, suppose that $\p X \ni x_n \to x$. 
	Choose $y_n \in H_{x_n} \cap B(x_n,1/n)$.
	Then 
	\[
		\|f^{(p)}(x) - f^{(p)}(x_n)\|_{L_p} \le \|f^{(p)}(x) - f^{(p)}(y_n)\|_{L_p} +
		 \|f^{(p)}(x_n) - f^{(p)}(y_n)\|_{L_p} \to 0 
	\]
	as $n \to \infty$. 
	This proves \Cref{claim2} and hence the theorem. \qed

\section{The Bochnak--Siciak theorem on tame closed sets} \label{sec:BS}

In this section we prove \Cref{main:6}.
The strategy for the proof is the following.
Since $f \in \cC^\infty(X)$, we can associate with every $x \in X$ the formal Taylor series $F_x$ 
of $f$ at $x$. Using a result of Eakin and Harris \cite{EakinHarris77} and Gabrielov \cite{Gabrielov73} 
we show that each $F_x$ is convergent and coincides 
with $f$ on their common domain. To prove that all $F_x$ glue together to give a global holomorphic extension we will 
use the following lemma. 

\begin{lemma} \label{lem:shrink} 
	Let $X \subseteq \R^d$ be closed and let $U \subseteq \R^d$ be open with $U \cap X \ne \emptyset$.
	Then there is an open subset $U_0$ of $U$ with $U_0 \cap X = U \cap X$ and such that 
	for all $x \in U_0$ and all $a \in A_x := \{a \in X : |a-x| = \on{dist}(x,X)\}$ we have 
	$[x,a] \subseteq U_0$.
\end{lemma}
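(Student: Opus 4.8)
The plan is to take for $U_0$ an explicit sublevel set of distance functions. If $U = \R^d$ there is nothing to do (take $U_0 = \R^d$), so assume $U \ne \R^d$; then $\R^d \setminus U$ is a nonempty closed set, and I would set
\[
	U_0 := \big\{x \in \R^d : \on{dist}(x,X) < \on{dist}(x,\R^d\setminus U)\big\}.
\]
Both distance functions are continuous, so $U_0$ is open. Since every $x \notin U$ satisfies $\on{dist}(x,\R^d\setminus U) = 0 \le \on{dist}(x,X)$, we get $U_0 \subseteq U$; and if $x \in U \cap X$ then $\on{dist}(x,X) = 0 < \on{dist}(x,\R^d\setminus U)$ (as $U$ is open), so $U \cap X \subseteq U_0$. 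Combined with $U_0 \subseteq U$, this yields $U_0 \cap X = U \cap X$.

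It then remains to check the segment property. Fix $x \in U_0$ and $a \in A_x$; note $A_x \ne \emptyset$ because $X$ is closed and nonempty. Put $\rho := \on{dist}(x,X) = |x - a|$ and let $y = (1-t)x + ta$ with $t \in [0,1]$, so that $|y - a| = (1-t)\rho$ and $|y - x| = t\rho$. On the one hand $\on{dist}(y,X) \le |y-a| = (1-t)\rho$ since $a \in X$. On the other hand, by the triangle inequality for the distance to $\R^d\setminus U$,
\[
	\on{dist}(y,\R^d\setminus U) \ge \on{dist}(x,\R^d\setminus U) - |x-y| = \on{dist}(x,\R^d\setminus U) - t\rho,
\]
and since $x \in U_0$ gives $\on{dist}(x,\R^d\setminus U) > \rho$, the right-hand side is strictly larger than $(1-t)\rho \ge \on{dist}(y,X)$. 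Hence $y \in U_0$, and therefore $[x,a] \subseteq U_0$.

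I do not expect any real obstacle here; the only points requiring a little care are the degenerate cases (namely $U = \R^d$, disposed of at the outset, and $x \in X$, where $A_x = \{x\}$ and the segment is a single point) and the observation that the one definition of $U_0$ simultaneously preserves the trace on $X$ and is open — both immediate from writing $U_0$ as a sublevel set of a continuous function. One could additionally record the (not strictly needed, but natural) fact that $\on{dist}(y,X) = (1-t)\rho$ and $a \in A_y$ for every $y \in [x,a]$, which makes explicit that the nearest-point structure stays consistent all along the segment.
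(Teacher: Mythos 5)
Your proof is correct, and it takes a genuinely different route from the paper's. The paper defines $U_0$ implicitly as the maximal candidate, $U_0:=\{x\in U: [x,a]\subseteq U \text{ for all } a\in A_x\}$; the segment property then follows from the observation that every $y\in(x,a]$ has $A_y=\{a\}$, but openness of $U_0$ requires a separate sequential compactness argument (if $x_n\to x\in U_0$ with $x_n\notin U_0$, pass to subsequences so that $a_n\to a\in A_x$ and $y_n\to y\in[x,a]\setminus U$, contradiction). You instead take the explicit sublevel set $U_0=\{x:\on{dist}(x,X)<\on{dist}(x,\R^d\setminus U)\}$, which is open for free, and verify the segment property by a two-line computation with the $1$-Lipschitz property of distance functions: $\on{dist}(y,X)\le(1-t)\rho$ while $\on{dist}(y,\R^d\setminus U)\ge\on{dist}(x,\R^d\setminus U)-t\rho>(1-t)\rho$. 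Your version trades the qualitative limit argument for a quantitative inequality and is arguably cleaner; the paper's version produces the largest admissible $U_0$, which is irrelevant for the statement but conceptually natural. Your handling of the degenerate cases ($U=\R^d$, and $x\in X$ where $A_x=\{x\}$) is fine, and $A_x\ne\emptyset$ because $X$ is closed and nonempty.
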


\begin{proof} 
	Set $U_0 := \big\{x \in U : [x,a] \subseteq U \text{ for all } a \in A_x\big\}$.
	Then, for all $x \in U_0$ and all $a \in A_x$, we have $[x,a] \subseteq U_0$. 
	For, let $y \in [x,a]$. If $y = x$ there is nothing to prove.
	Otherwise $A_y = \{a\}$ and $[y,a] \subseteq [x,a] \subseteq U$ (as in the figure).

	\begin{figure}[ht]
		\includegraphics[scale=0.3]{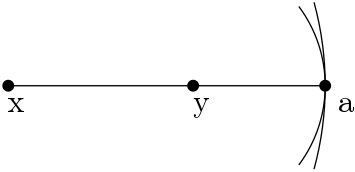}
	\end{figure}

	Clearly, $U_0 \cap X = U \cap X$. 
	It remains to show that $U_0$ is open. 
	To this end we first observe that, if $x_n \to x$ and $A_{x_n} \ni a_n \to a$, then $a \in A_x$.
	This follows from letting $n \to \infty$ in
	$|x_n - a_n| = \on{dist}(x_n,X)$,  
	since $X$ is closed.

	If $U_0$ is not open, then there exists a sequence $x_n \to x$, where $x_n \not \in U_0$ 
	and $x \in U_0$. 
	So, for all $n$, 
	there is $a_n \in A_{x_n}$ and $y_n \in [x_n,a_n] \setminus U$. 
	After passing to a subsequence, we may assume that $a_n \to a \in A_x$, by the observation above, 
	and in turn that $y_n \to y \in [x,a]$. 
	Since $x \in U_0$ we have $y \in U$, a contradiction.
\end{proof}

\begin{proof}[Proof of \Cref{main:6}]
	Suppose that $X \subseteq \R^d$ is a fat closed subanalytic set.
	There exists an analytic manifold $M$ and a proper analytic map $\vh : M \to \R^d$ 
	such that $X = \vh(M)$, by the uniformization theorem, see e.g.\ \cite{BM88}.
	Then $f \o \vh$ is $\cC^\infty$ and real analytic on real analytic curves in $M$. 
	By the Bochnak--Siciak theorem (\Cref{result:4}), $f \o \vh$ is analytic on $M$. 
	For each $x \in X$ there is $y \in \vh^{-1}(x)$ such that $\vh$ has generic rank $d$ at $y$.  
	By a result of Eakin and Harris \cite{EakinHarris77} (proved earlier by Gabrielov \cite{Gabrielov73}), 
	the homomorphism $\vh^*$ of formal power series rings given by formal composition with $\vh$ at $y$ 
	is strongly injective, that is, the formal Taylor series $F_x$ of $f$ at $x$ converges. 
	It represents a holomorphic function $F_x$ in a neighborhood $U_x$ of $x$ in $\C^d$ which coincides with 
	the real analytic function
	$f|_{\interior(X)}$ on $\interior(X) \cap U_x$.  

	It remains to show that the $F_x$ piece together to give a 
	global holomorphic extension of $f$ to a neighborhood of $X$ in 
	$\C^d$.
	We may assume that 
	\begin{equation} \label{eq:productstructure}
		U_x = U_x^\R \times i(-r_x,r_x)^d,	
	\end{equation} 
	where $U_x^\R \subseteq \R^d$. 
	We use \Cref{lem:shrink} to replace each $U_x^\R$ by the connected component of $(U_x^\R)_0$ 
	which contains $x$ (and leave the part of $U_x$ in $i\R^d$ unchanged). 
	Thus we may assume that the cover $\{U_x^\R\}$ of $X$ 
	has the property that for each $z \in U_x^\R$ all segments $[z,a]$, $a \in A_z$, belong to $U_x^\R$. 
 	By \eqref{eq:productstructure}, each $U_x$ has the property that 
	for $z+iw \in U_x$ also $z+itw \in U_x$ for all $t \in [0,1]$.

	Let $V$ be a connected component of $U_x \cap U_y$.
	It follows that, if $z + iw \in V$, then $z \in V^\R := V \cap \R^d$, and 
	$V^\R$ is a connected component of $U_x^\R \cap U_y^\R$. 
	Moreover, $[z,a] \subseteq V^\R$ for all $a \in A_z \subseteq X$. 
	Since $X = \ol{\interior (X)}$, the intersection $V^\R  \cap \interior (X)$ is non-empty and 
	on this set 
	the holomorphic extensions $F_x$ and $F_y$ coincide with $f$. 
	By the identity theorem, $F_x$ and $F_y$ coincide 
	on $V$. Since the component $V$ of $U_x \cap U_y$ was arbitrary, $F_x$ and $F_y$ coincide on $U_x \cap U_y$.
\end{proof}

\begin{proof}[Proof of \Cref{main:7}]
	The assumption for $X$ clearly implies that $X=\ol {\interior(X)}$.
	For each boundary point $z \in \p X$ there is a holomorphic function $F_z$ defined in a neighborhood $U_z$ of $z$ 
	in $\C^d$ which coincides with $f$ on $U_z \cap \interior(X)$; this follows from \Cref{main:6} applied to the 
	subanalytic set $X_z$. Using \Cref{lem:shrink} as in the proof of \Cref{main:6}, 
	one easily concludes the assertion.
\end{proof}

\section{Arc-\texorpdfstring{$\cC^M$}{CM} functions on Lipschitz sets} \label{sec:mainB}

In this section we prove \Cref{main:2}:
\emph{All $X \in \sH^1(\R^d)$ satisfy 
\[
\cC^M(X) \subseteq \cA^M(X) \subseteq \cC^{M^{(2)}}(X), 
\]
for any non-quasianalytic weight sequence $M = (M_k)$.}

It can be expected that a similar statement holds for $X \in  \sH^\al(\R^d)$, where $\al<1$,
with a \emph{larger} weight sequence $N = N(\al,M)$ instead of $M^{(2)}$.  
We do not pursue this question any further for $\al$-sets, but results of this type for subanalytic sets are 
presented in \Cref{ssec:ultra}. 

\subsection{Reduction to an open set of directions}

Let $f : \R^2 \to \R$ be smooth. 
The mixed partial derivatives of order $k$ of $f$ at any point $x \in \R^2$ can be computed from directional derivatives 
of order $k$ of $f$ at $x$ 
by means of the identity
\begin{equation} \label{system}
	d_v^k f(x) = \sum_{j=0}^k \binom{k}{j} v_1^j v_2^{k-j} \p_1^j \p_2^{k-j} f(x), \quad v = (v_1,v_2) \in \R^2. 
\end{equation}
The next lemma guarantees that the constants which appear in the process of solving these linear equations 
grow at most exponentially in $k$ and hence the class $\cC^M$ is preserved; a similar lemma was proved in \cite{Neelon99}.

\begin{lemma} \label{Neelon}
Let $-1 \le t_0 < t_1 < \cdots < t_k \le 1$ be equidistant points such that $t_k -t_0 = a$. 
If $x_0,x_1,\ldots,x_k$ is a solution of the linear system of equations
\begin{equation} \label{eq:linear}
	\sum_{j=0}^k \binom{k}{j} t_i^j x_j = y_i, \quad  i = 0,1,\ldots,k,
\end{equation}
then we have 
\begin{equation} \label{change}
		\max_j |x_j| \le \Big(\frac{16e^2}{a}\Big)^k \max_m |y_m|.	
\end{equation}
\end{lemma}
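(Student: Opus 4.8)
The plan is to recognize the linear system \eqref{eq:linear} as essentially a Vandermonde-type system and to bound the entries of the inverse matrix explicitly. Write $M = \big(\binom{k}{j} t_i^j\big)_{i,j=0}^k$, so that the system reads $M x = y$. The binomial coefficients can be pulled out: if $D = \operatorname{diag}\big(\binom{k}{0},\ldots,\binom{k}{k}\big)$ and $V = (t_i^j)_{i,j}$ is the Vandermonde matrix in the nodes $t_0,\ldots,t_k$, then $M = V D$, hence $x = D^{-1} V^{-1} y$. Since $\binom{k}{j} \ge 1$, the diagonal factor $D^{-1}$ only helps, so it suffices to bound $\|V^{-1}\|_\infty$, i.e. the maximum absolute row sum of $V^{-1}$.

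The entries of $V^{-1}$ are given by the classical Lagrange interpolation formula: if $V^{-1} = (w_{ij})$, then $\sum_j w_{ij} t_m^j = \delta_{im}$, so the $i$-th column of $V^{-1}$ consists of the coefficients of the Lagrange basis polynomial $\ell_i(s) = \prod_{m \ne i} \frac{s - t_m}{t_i - t_m}$. Thus I would estimate, for each fixed $i$, the sum of the absolute values of the coefficients of $\ell_i$. Two ingredients enter: first, the denominators $\prod_{m\ne i}|t_i - t_m|$; since the nodes are equidistant with spacing $a/k$, one has $|t_i - t_m| = \frac{a}{k}|i-m|$, so $\prod_{m \ne i}|t_i - t_m| = \big(\frac{a}{k}\big)^k \, i! \,(k-i)! \ge \big(\frac{a}{k}\big)^k \cdot \frac{k!}{2^k \binom{k}{\lfloor k/2\rfloor}} \cdot (\cdots)$ — more simply, $i!(k-i)! = k!/\binom{k}{i} \ge k!/2^k$, giving a lower bound $\big(\frac{a}{k}\big)^k k! \, 2^{-k}$ for the denominator. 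Second, the numerator polynomial $\prod_{m\ne i}(s-t_m)$ has all its nodes in $[-1,1]$, so by expanding the product the sum of absolute values of its coefficients is at most $\prod_{m\ne i}(1 + |t_m|) \le 2^k$. Combining, the coefficient-sum of $\ell_i$ is at most $2^k \big/ \big(\big(\frac{a}{k}\big)^k k! \, 2^{-k}\big) = \big(\frac{4k}{a}\big)^k / k!$, and using $k! \ge (k/e)^k$ this is at most $\big(\frac{4e}{a}\big)^k$. Summing over the (at most $k+1$) columns and absorbing the factor $k+1 \le 2^k$ (or a similar crude bound) and the stray constants yields $\max_j |x_j| \le \big(\frac{16 e^2}{a}\big)^k \max_m |y_m|$; the explicit constant $16e^2$ leaves ample slack for these crude steps.

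The main obstacle is purely bookkeeping: getting clean, valid lower bounds on the Vandermonde denominators $\prod_{m\ne i}|t_i - t_m|$ uniformly in $i$, and then matching the power of $2$'s and $e$'s so that the final constant comes out as stated. There is no conceptual difficulty — one could alternatively invoke a known explicit formula for the inverse of a Vandermonde matrix and plug in the equidistant nodes — but care is needed to ensure the bound is uniform over all $i \in \{0,\ldots,k\}$ and all admissible node placements (only the spacing $a/k$, hence $a = t_k - t_0$, matters, not the location of $t_0$ in $[-1,1]$). Since the statement only asserts the inequality with the generous constant $16 e^2$, each estimate above can be carried out with the simplest available bound.
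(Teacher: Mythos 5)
Your proposal is correct and follows essentially the same route as the paper: both invert the system via the Lagrange interpolation formula, use the equidistant-node identity $\prod_{m\ne i}|t_i-t_m|=(a/k)^k\, i!\,(k-i)!$ together with $i!(k-i)!\ge k!/2^k$ and Stirling, and then control the numerators, which are elementary symmetric functions of the nodes. The only (harmless) difference is bookkeeping: the paper bounds each symmetric function $\sigma^i_{k-m}$ separately by $\binom{k}{m}8^k$ so that the binomial cancels against $a_m=\binom{k}{m}x_m$, whereas you bound the whole coefficient sum of each Lagrange basis polynomial by $\prod_{m\ne i}(1+|t_m|)\le 2^k$ and simply discard the factors $\binom{k}{j}^{-1}\le 1$ — which is, if anything, slightly cleaner and still lands comfortably under the stated constant $16e^2$.
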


\begin{proof}
	Let $P(t) = a_0 + a_1 t + \cdots + a_k t^k$ be the polynomial with coefficients $a_j = \binom{k}{j} x_j$. 
	Then the system \eqref{eq:linear} reads
	\[
		P(t_i) = y_i, \quad i = 0,1,\ldots,k.  
	\]
	By Lagrange's interpolation formula 
  (e.g.\ \cite[(1.2.5)]{RS02}), 
  \[
    P(t) = \sum_{i=0}^k y_i \prod_{\substack{j=0\\ j\ne i}}^k \frac{t-t_j}{t_i-t_j},
  \]
  and therefore
  \[
    a_m = (-1)^{k-m} \sum_{i=0}^k y_i  \si^i_{k-m} \prod_{\substack{j=0\\ j\ne i}}^k \frac{1}{t_i-t_j} ,
  \]
  where $\si^i_j$ is the $j$th elementary symmetric polynomial in $(t_\ell)_{\ell \ne i}$.
  We have   
	\[
		|t_i-t_j| = \frac{a|i-j|}{k}, \quad 	|t_j| = \Big|t_0 + a \frac{j}{k}\Big| \le 2\frac{k+j}{k}, 
	\]
	and hence, using $e^{-k}k^k \le k! \le k^k$, 
	\begin{align*}
		\prod_{\substack{j=0\\ j\ne i}}^k \frac{1}{|t_i-t_j|}  = \frac{k^k}{a^k i! (k-i)!} \le \Big(\frac{2e}{a}\Big)^k,   
	\end{align*}
	and
	\begin{align*}
		|\si^i_{k-m}| 
		&\le \binom{k}{m} \Big(\frac{2}{k}\Big)^{k-m} 
		\frac{(2k)!}{(k+m)!} \le \binom{k}{m} \Big(\frac{2}{k}\Big)^{k-m} 4^k (k-m)! \le \binom{k}{m} 8^k.
	\end{align*}
	It follows that 
	\begin{equation*}
		|x_m| \le \Big(\frac{16e}{a}\Big)^k \sum_{i=0}^k |y_i| \le  \Big(\frac{16e^2}{a}\Big)^k \max_i |y_i|,  
	\end{equation*}
	that is \eqref{change}.
\end{proof}

\begin{proposition} \label{conesuffices}
		Let $f : \R^d \to \R$ be smooth. Let $K \subseteq \R^d$ be compact and let $M=(M_k)$ be a positive 
		sequence.
		The following assertions are equivalent:
		\begin{enumerate}
		\item $\exists C,\rh>0 \A k \in \N \A x \in K \A v \in S^{d-1} :  |d_v^k f(x)| \le C \rh^k k!\, M_k$.  
		\item There exist $v_0 \in S^{d-1}$ and $r>0$ such that \newline
		$\exists C,\rh>0 \A k \in \N \A x \in K \A v \in B(v_0,r) \cap S^{d-1} :  |d_v^k f(x)| \le C \rh^k k!\, M_k.$ 
		\item $\exists C,\rh>0 \A x \in K \A \al \in \N^d :  |\p^\al f(x)| \le C \rh^{|\al|} |\al|!\, M_{|\al|}.$
	\end{enumerate}
	The constants $C$, $\rh$ may differ from item to item, but they change in a uniform way which depends only on 
	$r$.
\end{proposition}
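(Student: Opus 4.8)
The plan is to establish the cycle $(1)\Rightarrow(2)\Rightarrow(3)\Rightarrow(1)$, tracking the constants so that only their dependence on $r$ (and on the fixed dimension $d$) is recorded. The implication $(1)\Rightarrow(2)$ is immediate: take any $v_0\in S^{d-1}$ and any $r>0$, since the estimate in (1) holds for every $v\in S^{d-1}$ and hence a fortiori on $B(v_0,r)\cap S^{d-1}$ with the same $C,\rh$. For $(3)\Rightarrow(1)$ one expands the directional derivative multinomially, $d_v^k f(x)=\sum_{|\al|=k}\binom{k}{\al}v^\al\p^\al f(x)$, and estimates, for $v\in S^{d-1}$,
\[
|d_v^k f(x)|\le C\rh^k k!\,M_k\sum_{|\al|=k}\binom{k}{\al}|v^\al|=C\rh^k k!\,M_k\,(|v_1|+\cdots+|v_d|)^k\le C\big(\sqrt d\,\rh\big)^k k!\,M_k ,
\]
using $\|v\|_1\le\sqrt d\,\|v\|_2=\sqrt d$; thus $\rh$ is merely replaced by $\sqrt d\,\rh$.

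The substance is $(2)\Rightarrow(3)$, where \Cref{Neelon} is the engine. First I would pass from the sphere to a solid ball: after replacing $v_0$ by a nearby unit vector all of whose coordinates lie in $(-1,1)$ and shrinking $r$ (which only affects the constants through $r$), the homogeneity of degree $m$ of $w\mapsto d_w^m f(x)$ produces an $\ep=\ep(r)>0$ such that
\[
|d_w^m f(x)|\le C(2\rh)^m m!\,M_m\qquad\text{for all }x\in K,\ m\in\N,\ w\in B(v_0,\ep)\subseteq\R^d .
\]
One then recovers the partial derivatives one coordinate at a time. The factorization $\binom{k}{\al}=\binom{k}{\al_1}\binom{k-\al_1}{\al_2,\ldots,\al_d}$ yields the identity
\[
d_{(t,\hat v)}^{k} f(x)=\sum_{j=0}^{k}\binom{k}{j}\,t^{j}\,d_{\hat v}^{\,k-j}\big(\p_1^{j}f\big)(x),\qquad \hat v=(v_2,\ldots,v_d) ,
\]
which generalizes \eqref{system} and is precisely of the form treated in \Cref{Neelon}: fixing $\hat v$ near $\hat v_0:=(v_{0,2},\ldots,v_{0,d})$ and choosing $k+1$ equidistant points $t_i\in[-1,1]$ of span $a=a(\ep)$ around $v_{0,1}$, small enough that $(t_i,\hat v)\in B(v_0,\ep)$ for all $i$, \Cref{Neelon} gives
\[
\big|d_{\hat v}^{\,k-j}\big(\p_1^{j}f\big)(x)\big|\le\Big(\tfrac{16e^2}{a}\Big)^{k}C(2\rh)^k k!\,M_k\qquad(j=0,\ldots,k),
\]
uniformly in $x\in K$ and in $\hat v$ close to $\hat v_0$. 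Iterating this in the variables $v_2,\ldots,v_d$ — at the $i$-th step running the one-variable interpolation of \Cref{Neelon} in the single variable $v_i$ applied to the function $\p_{i-1}^{\al_{i-1}}\cdots\p_1^{\al_1}f$ obtained in the previous step, at the order dictated by the remaining multi-index — after $d$ steps bounds every partial $\p^\al f(x)$ with $|\al|=k$ by $C\tilde\rh^{\,k}k!\,M_k$, where $\tilde\rh:=2\rh\,(16e^2/a_\ast)^{d}$ for some $a_\ast=a_\ast(r,d)>0$; this is (3), and the new constants depend only on the old ones and on $r$, as claimed.

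I expect the only genuine obstacle to be the bookkeeping of this iteration: at each step one must choose the interval for $v_i$ together with a neighbourhood of the not-yet-fixed coordinates so that the sampling directions stay inside the region where the estimate from the previous step is available, while keeping the step lengths (hence the accumulated factor $(16e^2/a_\ast)^{dk}$) bounded below uniformly in $k$ and $x$ by a quantity depending only on $r$ and $d$. There is no analytic difficulty beyond \Cref{Neelon}; the factor $k!\,M_k$ enters exactly once, in the passage to the ball, and is untouched by the interpolation steps, which contribute only the exponential factors.
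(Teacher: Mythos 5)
Your proof is correct, and for the substantive implication it takes a genuinely different route from the paper. Both arguments run on \Cref{Neelon}, and your $(1)\Rightarrow(2)$ and $(3)\Rightarrow(1)$ (multinomial expansion together with $\|v\|_1\le\sqrt d\,\|v\|_2$) are unproblematic. For the main step the paper first treats $d=2$: it applies \Cref{Neelon} a single time to the system \eqref{system}, sampling $k+1$ directions in the arc $B(v_0,r)\cap S^1$, to recover all mixed partials $\p_1^j\p_2^{k-j}f$ at once. It then obtains $(2)\Rightarrow(1)$ in general dimension by slicing with affine $2$-planes containing the line $x+\R v_0$ --- each such plane meets $B(v_0,r)\cap S^{d-1}$ in an arc whose length is bounded below independently of the plane, so the constants stay uniform --- and finally passes from $(1)$ to $(3)$ via the polarization inequality $\|d^kf(x)\|_{L_k}\le(2e)^k\sup_{|v|\le1}|d_v^kf(x)|$ of \cite[Lemma 7.13(1)]{KM97}. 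You instead go directly from $(2)$ to $(3)$ by iterating the one-variable interpolation $d-1$ times, peeling off one coordinate of the multi-index at each step; this produces the partial derivatives outright and dispenses with both the two-plane reduction and the polarization formula, at the price of the nested-neighbourhood bookkeeping you flag. That bookkeeping does go through: a coordinate box of side comparable to $r/\sqrt d$ inscribed in the cone over the spherical cap (intersected with $[-1,1]^d$) keeps every sampling interval of length bounded below by a quantity depending only on $r$ and $d$, the factor $k!\,M_k$ passes through each interpolation unchanged because every right-hand side is already bounded with the \emph{total} order $k$, and the accumulated exponential factor is $(16e^2/a_*)^{(d-1)k}$ as you claim, so the constants change in the uniform manner the proposition asserts. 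In short, the paper's route is shorter once the polarization inequality is granted; yours is self-contained modulo \Cref{Neelon}.
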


\begin{proof}
	Let us first consider the case $d=2$. In this case $B:=B(v_0,r) \cap S^{d-1}$ is an open arc $I \subseteq S^1$; 
	let $\ell(I)$ denote the 
	length 
	of $I$.

	(1) $\Rightarrow$ (2) is trivial and (3) $\Rightarrow$ (1) follows easily from \eqref{system}. 

	(2) $\Rightarrow$ (3)
	By a linear change of coordinates, we may assume that the arc $I$ is symmetric about the $y$-axis and 
	by shrinking $I$, we may also assume that its projection to the $y$-axis is contained in 
	$\{(0,y) : 1/2 \le y \le 1\}$ and that the estimates in (2) hold also at the endpoints of $I$. 
	Let $(-a/2,a/2)$ be the projection of $I$ to the $x$-axis and let $-a/2 =t_0 < t_1 <\cdots <t_k =a/2$ 
	be an equidistant partition. 
	Apply \Cref{Neelon} to the system \eqref{system} with the $k+1$ directions $v_i =(t_i,s_i)$, $i=0,\dots,k$, in $I$;	
	then $1/2 \le s_i \le 1$.
	The statement about the uniform change of the constants follows from \eqref{change}. 

	Now we consider the general case.

	(1) $\Leftrightarrow$ (2)
	The statement follows by applying the 2-dimensional analogue 
	to every affine 2-plane $\pi$ containing the affine line $x + \R v_0$. The change of the constants $C$, $\rh$ 
	depends only on the length of the arcs defined by the intersection $\pi \cap B$ which is independent of $\pi$. 

	(1) $\Leftrightarrow$ (3) By the polarization formula \cite[Lemma 7.13(1)]{KM97}, we have
	\begin{equation*}
		\sup_{|v| \le 1} |d^k_v f(x)| \le \|d^k f(x)\|_{L_k} \le (2e)^k  \sup_{|v| \le 1} |d^k_v f(x)|
	\end{equation*}
	which entails the assertion.
\end{proof}

\subsection{Proof of \texorpdfstring{\Cref{main:2}}{Theorem 5.1}}
	Let $M=(M_k)$ be a non-quasianalytic weight sequence.
	Let $X \in \sH^1(\R^d)$.
	The inclusion $\cC^M(X) \subseteq \cA^M(X)$ is an easy consequence of Fa\`a di Bruno's formula and log-convexity of $M$
	(cf.\ \cite[Proposition 3.1]{RainerSchindl12}). 

	Let us prove $\cA^M(X) \subseteq \cC^{M^{(2)}}(X)$. 
	A function $f \in \cA^M(X)$ belongs to $\cC^\infty(X)$, by \Cref{main:1}. 
	Suppose for contradiction that $f \not\in \cC^{M^{(2)}}(X)$. 
	Then there is $a \in X$ such that for all $\de,C,\rh>0$ there exist 
	$x \in X \cap B(a,\de)$, $v \in S^{d-1}$, and $k \in \N$ with 
	\begin{equation} \label{eq:contra}
		|d^k_v f(x)| > C \rh^k k!\, M^{(2)}_k. 
	\end{equation}
	We may assume that $a \in \p X$ (if $a \in \interior (X)$ then the arguments in the proof of \cite[Theorem 3.9]{KMRc} 
	lead to a contradiction). 
	Since $X \in \sH^1(\R^d)$, we may suppose that
	there exist $\ep >0$ and a truncated open cone $\Ga = \Ga^1_d(r,h)$ such that 
	\begin{equation} \label{eq:cone}
		\text{for all $y \in X  \cap B(a,\ep)$ we have $y + \Ga \subseteq \interior (X)$.}	
	\end{equation}
	By rescaling, we may assume that $r=h=1$.
	Set $C(y,r) := y + \Ga_d^1(r,r)$ for $0< r \le 1$.  
	There is a universal constant $c >0$  
	such that $C(y_1,r_1) \cap C(y_2,r_2) \ne \emptyset$ if $|y_1 - y_{2}| < c \min\{r_1,r_2\}$. 

	Let $\la_k \searrow 0$ be the sequence associated with the sequence $M_k$, by \Cref{curvelemma}.
	By \Cref{conesuffices} and \eqref{eq:contra} (using $\de := c\la_{n+1}/3$, $C := \la_n^{-1}$, $\rh:= \la_n^{-3}$),
	there exist sequences $x_n \in  X \cap  B(a,c\la_{n+1}/3)$, $v_n \in S^{d-1} \cap \R_+ \Ga$, 
	$k_n \in \N$
	such that 
	\begin{equation} \label{eq:contradiction}
		|d_{v_n}^{k_n} f(x_n)| \ge \la_n^{-3k_n -1} k_n!\, M^{(2)}_{k_n}  \quad \text{ for all } n.
	\end{equation}
	Let us set $C_n := C(x_n,\la_n)$.
	Since $|x_n -x_{n+1}| < c \la_{n+1}$,
	there is a sequence $u_n$ such that $u_{n+1} \in C_n \cap C_{n+1}$ for all $n$. 
	Evidently, $x_n$ and $u_n$ are both $1/\la_n$-converging to $a$. 
	We may assume that for all $n\ge n_0$ we have $C_n \subseteq \interior (X)$, by \eqref{eq:cone}.  

	Without loss of generality assume that $a =0$. 
	Let $c_n(t) = x_n + t^2 \la_n v_n$. Let $T_n$ and $t_n$ be chosen as in \eqref{eq:Tt}, 
	and let $\vh$ be the function used in the proof of \Cref{curvelemma}.
	Define 
	\[
		c(t) =  \vh\Big(\frac{t-t_n}{T_n}\Big) c_n(t-t_n) + \Big(1- \vh\Big(\frac{t-t_n}{T_n}\Big) \Big) 
		\Big(u_n \mathbb {1}_{(-\infty, t_n]}(t) + u_{n+1}\mathbb {1}_{[t_n,+\infty)}(t)\Big)
	\]
	for $t \in [t_n -T_n, t_n+T_n]$ (here $\mathbb{1}_A$ denotes the characteristic function of the set $A$); 
	note that $t_n+T_n = t_{n+1} - T_{n+1}$. 

	\begin{figure}[ht]
		\includegraphics[scale=0.27]{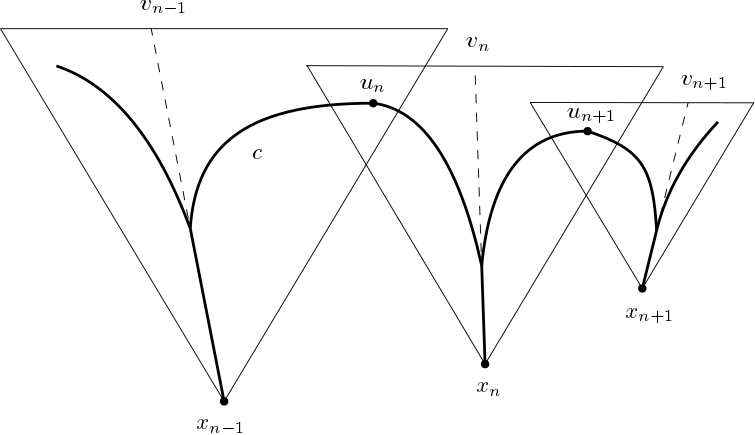}
	\end{figure}

	Extend $c$ by $c=0$ on $[t_\infty,\infty)$.
	Then $c$ is $\cC^\infty$ on $[t_{n_0} - T_{n_0},+\infty) \setminus \{t_\infty\}$ and $c(t_n-T_n) = u_{n}$ and $c(t_n+T_n) = u_{n+1}$. 
	By construction, $c(t) \in C_n$ if  $t \in [t_n -T_n, t_n+T_n]$ and thus $c$ lies in $X$. 
	Since the curves $c_n$ as well as $u_n$ satisfy \eqref{eq:lamdaconverge}, the proof of \Cref{curvelemma} implies 
	that $c$ is a $\cC^M$-curve.

	Then, since $f \in \cA^\infty(X)$, for all $k$, 
	\[
		(f\o c)^{(2k)}(t_n) = \frac{(2k)!}{k!} \la_n^k d_{ v_n}^k f(x_n).
	\]
	Using \eqref{eq:contradiction}, 
	we may conclude 
	\begin{align*}
		\Big( \frac{|(f\o c)^{(2k_n)}(t_n)|}{(2 k_n)!\, M_{2 k_n}} \Big)^\frac{1}{2 k_n + 1} 
		&= \Big( \frac{\la_n^{k_n} |d_{v_n}^{k_n} f(x_n)|}{ k_n!\, M_{2 k_n}} \Big)^\frac{1}{2 k_n + 1}
		\ge \frac{1}{\la_n}  \to \infty,
	\end{align*}
	as $n \to \infty$,
	contradicting the assumption $f \in \cA^M(X)$. \qed

\section{Arc-\texorpdfstring{$\cC^M$}{CM} functions on subanalytic sets}
 \label{ssec:ultra}

Let $M=(M_k)$ be a non-quasianalytic weight sequence.
Let $X$ be a simple fat closed subanalytic set.
We will see in this section that $\cA^M(X) \subseteq \cC^N(X)$ for some other  
non-quasianalytic weight sequence $N$ which depends only on $M$ and $X$ (in an explicit way).

\subsection{Rectilinearization}

We start with some simple observations.
For arbitrary sets $Y \subseteq \R^e$, $X \subseteq \R^d$ 
we denote by $\cC^\infty(Y,X)$ the class of mappings $\vh : Y \to X$ such that 
$\vh_i \in \cC^\infty(Y)$ for all components $\vh_i = \on{pr}_i \o \vh$. 
Similarly, for $\cC^M(Y,X)$ and $\cC^\om(Y,X)$. 

\begin{lemma} \label{lem:pullback}
	Let $X \subseteq \R^d$ and $Y \subseteq \R^e$. 
	We have:
	\begin{enumerate}
		\item If $\vh \in \cC^\infty(Y,X)$ and $\cA^\infty(Y) = \cC^\infty(Y)$, 
		then $\vh^* \cA^\infty(X)\subseteq \cC^\infty(Y)$.
		\item If $\vh \in \cC^\om(Y,X)$ and $\cA^\om(Y) = \cC^\om(Y)$, then $\vh^* \cA^\om(X)\subseteq \cC^\om(Y)$.  
		\item If $\vh \in \cC^M(Y,X)$ and $\cA^M(Y) \subseteq  \cC^{N}(Y)$, then $\vh^* \cA^M(X)\subseteq \cC^{N}(Y)$.
	\end{enumerate}
\end{lemma}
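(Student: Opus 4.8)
The three statements share the same structure, so I would prove them uniformly. The observation I would record first is the following composition fact: \emph{if $r \in \{\infty,\om\}$, or $r = M$ for a weight sequence $M$, $\vh \in \cC^r(Y,X)$, and $c \in \cC^r(\R,Y)$, then $\vh \o c \in \cC^r(\R,X)$.} To see this, note that $c(\R) \subseteq Y$ and that each component $\vh_i = \on{pr}_i \o \vh$ is, by the definition of $\cC^r$ on an arbitrary set, locally near every point of $Y$ the restriction of a $\cC^r$-function (a holomorphic function when $r = \om$) defined on an open subset of $\R^e$ (resp.\ of $\C^e$); composing such a local representative with $c$ and using that $\cC^r$ is stable under composition -- trivial for $r=\infty$, classical for $r = \om$, and valid for $r = M$ precisely because $M$ is log-convex (see \Cref{sec:DenjoyCarleman}) -- yields $\vh_i \o c \in \cC^r(\R,\R)$ for each $i$. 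Hence $\vh \o c \in \cC^r(\R,\R^d)$, and its image is contained in $X$.

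Granting this, part (1) follows at once: for $f \in \cA^\infty(X)$ and any $c \in \cC^\infty(\R,Y)$ we have $\vh \o c \in \cC^\infty(\R,X)$, hence $(\vh^* f)\o c = f \o (\vh \o c) \in \cC^\infty(\R,\R)$; so $\vh^* f \in \cA^\infty(Y)$, which equals $\cC^\infty(Y)$ by hypothesis. Part (3) is the same argument with $\cC^\infty$ replaced by $\cC^M$ throughout: for $f \in \cA^M(X)$ and $c \in \cC^M(\R,Y)$ the composition fact gives $\vh \o c \in \cC^M(\R,X)$, hence $f \o (\vh \o c) \in \cC^M(\R,\R)$, so $\vh^* f \in \cA^M(Y) \subseteq \cC^N(Y)$.

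For part (2) I would additionally keep in mind that $\cA^\om$ is, by definition, a subclass of $\cA^\infty$. So for $f \in \cA^\om(X)$ I would first invoke the reasoning of part (1) -- using $f \in \cA^\infty(X)$ and $\vh \in \cC^\om(Y,X) \subseteq \cC^\infty(Y,X)$ -- to get $\vh^* f \in \cA^\infty(Y)$; and then, for each $c \in \cC^\om(\R,Y)$, use the composition fact to obtain $\vh \o c \in \cC^\om(\R,X)$ and hence $(\vh^* f)\o c = f \o (\vh \o c) \in \cC^\om(\R,\R)$. Together this gives $\vh^* f \in \cA^\om(Y) = \cC^\om(Y)$.

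I do not expect any genuine obstacle in this lemma; the only two points that deserve a moment's attention are the composition fact for the Denjoy--Carleman class $\cC^M$ (which is exactly where log-convexity of $M$ enters) and the bookkeeping in (2) caused by $\cA^\om$ being defined as a subclass of $\cA^\infty$, so that the $\cC^\infty$-part of the conclusion must be checked separately.
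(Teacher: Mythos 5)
Your argument is correct and is essentially the paper's: the paper proves (1) by contraposition (if $f\o\vh\notin\cC^\infty(Y)=\cA^\infty(Y)$, pick a bad curve $c$ and note that $\vh\o c$ is a $\cC^\infty$-curve in $X$, contradicting $f\in\cA^\infty(X)$) and dismisses (2) and (3) with ``work similarly''; your direct version is the same proof read forwards, and your explicit handling of the bookkeeping in (2) forced by $\cA^\om(X)\subseteq\cA^\infty(X)$ is a welcome amount of extra care. The one point where your justification does not match the paper's definitions is the composition fact in case (3): you argue that each $\vh_i$ is locally the restriction of a $\cC^M$-function on an open subset of $\R^e$, but the paper's definition of $\cC^M(Y)$ explicitly does \emph{not} require local $\cC^M$-extendability --- it only asks that $\vh_i\in\cC^\infty(Y)$ (so locally the restriction of a \emph{smooth} function) and that the derivative bounds \eqref{DCcondition} hold on compact subsets of $Y$ itself. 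The fact you need is still true, but the correct route is to apply Fa\`a di Bruno directly: the derivatives of $\vh_i\o c$ at $t$ only involve the derivatives of $\vh_i$ at the points $c(t)\in Y$, which are controlled by \eqref{DCcondition} on the compact set $c(I)\subseteq Y$, and log-convexity of $M$ then gives the $\cC^M$-estimates for $\vh_i\o c$ exactly as in the inclusion $\cC^M(X)\subseteq\cA^M(X)$ in the proof of \Cref{main:2}. With that substitution your proof is complete.
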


\begin{proof}
	We prove (1); (2) and (3) work similarly. 
	Let $f \in \cA^\infty(X)$. 
	Assume that $f \o \vh \not \in \cC^\infty(Y)$. Since $\cC^\infty(Y) =  \cA^\infty(Y)$, 
	there exists 
	$c \in \cC^\infty(\R,Y)$ such that $f \o \vh \o c \not \in \cC^\infty(\R,\R)$. 
	But $\vh \o c$ is a $\cC^\infty$-curve in $X$, contradicting $f \in \cA^\infty(X)$. 
\end{proof}

Combining this lemma with the rectilinearization of subanalytic sets (see \Cref{rectilinearization})
we conclude the following.

\begin{theorem} \label{main2}
	Let $M=(M_k)$ be a non-quasianalytic weight sequence.
	Let $X \subseteq \R^d$ be a fat closed subanalytic set.
	There is a locally finite collection of real analytic mappings $\vh_\al : U_\al \to \R^d$, where the $U_\al$ 
	are open sets in $\R^d$, 
	such that, for all $\al$, 
	\begin{align}
		\vh_\al^* \cA^\infty(X) &\subseteq \cC^\infty(\vh_\al^{-1}(X)), 
		\\
		\vh_\al^* \cA^\om(X) &\subseteq \cC^\om(\vh_\al^{-1}(X)),
		\\
		\vh_\al^* \cA^M(X) &\subseteq \cC^{M^{(2)}}(\vh_\al^{-1}(X)).
	\end{align}
\end{theorem}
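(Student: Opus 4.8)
The plan is to combine the rectilinearization theorem (\Cref{rectilinearization}) with the pullback lemma (\Cref{lem:pullback}) and the already-established results on H\"older sets. Let $X\subseteq\R^d$ be a fat closed subanalytic set. By \Cref{rectilinearization}, there is a locally finite collection of real analytic mappings $\vh_\al : U_\al \to \R^d$, each a composite of finitely many local blow-ups with smooth centers, such that each $U_\al$ is diffeomorphic to $\R^d$, there are compact $K_\al\subseteq U_\al$ with $\bigcup_\al\vh_\al(K_\al)$ a neighborhood of $X$, and each $\vh_\al^{-1}(X)$ is a union of quadrants in $\R^d$. Since $\vh_\al$ is real analytic, in particular $\vh_\al\in\cC^\infty(U_\al,\R^d)$, $\vh_\al\in\cC^\om(U_\al,\R^d)$, and $\vh_\al\in\cC^M(U_\al,\R^d)$ for any weight sequence $M$; and $\vh_\al$ maps $\vh_\al^{-1}(X)$ into $X$, so $\vh_\al|_{\vh_\al^{-1}(X)}$ lands in $X$ in the sense required by \Cref{lem:pullback}.

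\textbf{Main step: identifying the source sets.} The point is that $Y_\al:=\vh_\al^{-1}(X)$, being a union of quadrants in $\R^d$, is a (finite, by local finiteness near each compact $K_\al$, or at least locally finite) union of sets each of which is a closed convex cone with nonempty interior — in particular each quadrant $Q(I_0,I_-,I_+)$ with $I_0=\emptyset$ is a fat closed Lipschitz (indeed polyhedral, hence $\sH^1$) set, while quadrants with $I_0\ne\emptyset$ are lower-dimensional. For the purpose of the three displayed inclusions we only need: (i) $\cA^\infty(Y_\al)=\cC^\infty(Y_\al)$, (ii) $\cA^\om(Y_\al)=\cC^\om(Y_\al)$, and (iii) $\cA^M(Y_\al)\subseteq\cC^{M^{(2)}}(Y_\al)$. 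When $Y_\al$ is a single full-dimensional quadrant, (i) and (ii) follow from \Cref{main:1} and the corollary after \Cref{main:7} (full-dimensional quadrants are Lipschitz sets, hence in $\sH^1(\R^d)\subseteq\sH(\R^d)$), and (iii) follows from \Cref{main:2}. For the reduction when $Y_\al$ is a genuine union of several quadrants, one argues locally: near a point $y\in Y_\al$ the smooth/real-analytic/$\cC^M$ structure is tested along curves into $Y_\al$, and the key observation is that a curve landing in a union of quadrants through a common point still, after the Faà di Bruno analysis at that point, only sees finitely many full-dimensional pieces, so the extension and estimates from each piece glue — this is precisely the kind of argument in \Cref{lem:quasiconvex} and the proof of \Cref{main:4}. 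In fact the cleanest route is to note that each $Y_\al$ is itself a closed fat (or lower-dimensional) subanalytic set which is \emph{simple} (unions of quadrants through the origin have connected-interior neighborhood bases), so one may invoke \Cref{main:4} for (i), the corollary for (ii), and for (iii) observe that the Lipschitz argument of \Cref{main:2} applies verbatim to each full-dimensional quadrant and the finitely-many-pieces gluing gives $\cC^{M^{(2)}}$ on $Y_\al$.

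\textbf{Conclusion.} Once (i)--(iii) are in hand for each $Y_\al=\vh_\al^{-1}(X)$, the three displayed inclusions are immediate from \Cref{lem:pullback}: parts (1), (2), (3) of that lemma give respectively $\vh_\al^*\cA^\infty(X)\subseteq\cC^\infty(\vh_\al^{-1}(X))$, $\vh_\al^*\cA^\om(X)\subseteq\cC^\om(\vh_\al^{-1}(X))$, and $\vh_\al^*\cA^M(X)\subseteq\cC^{M^{(2)}}(\vh_\al^{-1}(X))$. Local finiteness of the collection $\{\vh_\al\}$ is inherited directly from \Cref{rectilinearization}.

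\textbf{Expected main obstacle.} The delicate point is \emph{not} the pullback formalism but handling $Y_\al=\vh_\al^{-1}(X)$ when it is a union of several quadrants sharing lower-dimensional faces (rather than a single quadrant): one must check that arc-$\cC^M$ (resp.\ arc-smooth, arc-analytic) functions on such a union are still controlled by the individual full-dimensional quadrants with only an exponential (resp.\ $M\mapsto M^{(2)}$) loss, using that curves through a face point are, to infinite order at that point, curves in finitely many of the adjacent full-dimensional quadrants. This is where one leans on the techniques already developed — the $\cC^M$ curve lemma (\Cref{curvelemma}), the reduction to an open set of directions (\Cref{conesuffices}), and the gluing idea from the proof of \Cref{main:4} — rather than on anything genuinely new.
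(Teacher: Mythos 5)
Your overall skeleton --- rectilinearization (\Cref{rectilinearization}) plus the pullback lemma (\Cref{lem:pullback}), reducing everything to the sets $Y_\al=\vh_\al^{-1}(X)$, which one may take to be unions of full-dimensional quadrants --- is exactly the paper's route, and your treatment of a \emph{single} quadrant (convex, hence Lipschitz, so \Cref{main:1} and \Cref{main:2} apply) is fine. The gap is in your treatment of a union of several quadrants. Your ``cleanest route'' rests on the claim that a union of quadrants through the origin is simple; this is false. Take $\vh(u,v)=(u,uv)$, a chart of the blow-up of $\R^2$ at $0$, and $X=\{y\ge 0\}$: then $\vh^{-1}(X)=\{uv\ge 0\}$ is the union of the closed first and third quadrants, whose interior is disconnected in every neighborhood of the origin. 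So \Cref{main:4}, the corollary following \Cref{main:7}, and \Cref{lem:quasiconvex} --- all of which assume connectedness of the interior near the point --- cannot be invoked, and your alternative ``local curve / Fa\`a di Bruno'' sketch leans on exactly those statements. This configuration (two fat pieces meeting only in a lower-dimensional face) is precisely the shape of the pathology in \Cref{ex:simple}, so some genuinely quadrant-specific input is needed to rule it out; ``finitely many pieces glue'' is not automatic.

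The paper supplies that input as follows. If $Q_1\ne Q_2$ are two full-dimensional quadrants in the union, their intersection $\pi$ is a coordinate sector of some dimension $k\le d-1$, and for every $v\in Q_1\cup Q_2$ perpendicular to $\pi$ the set $\si_v:=\pi+\R v$ is a $(k+1)$-dimensional closed convex set contained in $Q_1\cup Q_2$ (one half of $\si_v$ lies in $Q_1$, the other in $Q_2$). Hence $f|_{\si_v}$ is smooth by the convex/Lipschitz case, and the directional derivatives $d^n_w f$ at points of $\pi$, for $w$ ranging over $\bigcup_v \si_v$, exist, are unique, and determine all partial derivatives there; this forces the jets of $f|_{Q_1}$ and $f|_{Q_2}$ to agree along $\pi$, and induction on the number of quadrants gives $\cA^\infty(Y)=\cC^\infty(Y)$ (indeed $\cA_M^\infty(Y)=\cC^\infty(Y)$). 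With that in hand, the $\cA^\om$ statement follows from \Cref{main:6}, which needs only $f\in\cC^\infty(Y)$ and $Y$ fat subanalytic, not simpleness; and the $\cC^{M^{(2)}}$ bound is, as you say, just \Cref{main:2} applied to each quadrant separately, since the estimates \eqref{DCcondition} can be checked quadrant by quadrant once smoothness on all of $Y$ is known. Your conclusion and your use of \Cref{lem:pullback} are correct, but this crossing-the-face argument is the essential step missing from your proposal.
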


\begin{proof}
	We use \Cref{rectilinearization}.
	Since $X = \overline {\interior (X)}$,  
we may assume that, for the quadrants $Q(I_0,I_-,I_+)$ whose union is $\vh_\al^{-1}(X)$, 
we have $I_0 = \emptyset$.  
We claim that a union $Y$ of quadrants $Q(\emptyset,I_-,I_+)$ is $\cA^\infty$- and $\cA^\om$-admissible. 
Furthermore, we claim that $Y$ satisfies $\cA^M(Y) \subseteq  \cC^{M^{(2)}}(Y)$.
Then \Cref{lem:pullback} implies the result.

{\it $\cA^\infty$-admissibility.}
By \Cref{main:1}, each  
$Q = Q(\emptyset,I_-,I_+)$ is $\cA^\infty$-admissible. 
	Any two different quadrants $Q_1$, $Q_2$ 
	have non-empty intersection $\pi$ which consists of a coordinate sector of 
	dimension $k \in \{0,\ldots,d-1\}$ (for $k=0$, $\pi = \{0\}$). 
	Suppose that $\pi$ is a coordinate sector of dimension $k$. 
	Let $v \in Q_1 \cup Q_2$ be any vector perpendicular to $\pi$. 
	Then $\si_v :=\pi + \R v$ is a $k+1$ dimensional closed convex set contained in $Q_1 \cup Q_2$.  
	We may conclude that $f|_{\si_v} \in \cC^\infty(\si_v)$. 
	Thus the directional derivatives $d^n_w f$ of $f$ of all orders $n$ at points in $\pi$ 
	with direction $w \in \bigcup_{v \in Q_1 \cup Q_2} \si_v$ exist and are unique.  
	These suffice to compute the partial derivatives of $f$ of all orders at points in $\pi$.
	This proves that $Q_1 \cup Q_2$ is $\cA^\infty$-admissible.
	The general case follows by induction.
	This also proves that we even have $\cA_M^\infty(Y) = \cC^\infty(Y)$.  

{\it $\cA^\om$-admissibility.}
	This follows from \Cref{main:6} and the fact that $Y$ is $\cA^\infty$-admissible.

{\it Finally we show $\cA^M(Y) \subseteq  \cC^{M^{(2)}}(Y)$.} 
	Since we already have $\cA_M^\infty (Y) = \cC^\infty(Y)$, it suffices to check that 
	the estimates \eqref{DCcondition} (for $M^{(2)}$ instead of $M$) hold for all $f \in \cA^M(Y)$ and 
	for each compact $K \subseteq Y$.  
	This is clear, since $f|_Q \in \cA^M(Q) \subseteq \cC^{M^{(2)}}(Q)$, by \Cref{main:2}, 
	for each of the finitely many quadrants $Q$ which make up $Y$.	 
\end{proof}

\subsection{Controlled loss of regularity}

Let $M=(M_k)$ be a weight sequence. Recall that, for positive integers $a$, $M^{(a)}$ denotes the weight sequence 
defined by $M^{(a)}_k := M_{ak}$.

\begin{proposition} \label{prop:loss}
	Let $M=(M_k)$ be a non-quasianalytic weight sequence.
	Let $X \subseteq \R^d$ be a fat compact subanalytic set.
	Then there is a positive integer $a$, independent of $M$, such that
	\begin{equation} 
		\cC^\infty(X) \cap \cA^{M}(X)	 \subseteq  \cC^{M^{(a)}}(X). 	
	\end{equation}
\end{proposition}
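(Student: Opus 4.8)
The plan is to combine the rectilinearization result \Cref{main2} with the control of the loss of regularity under blowing down a smooth center provided by \cite{ChaumatChollet99} and \cite{BelottoBierstoneChow17}.

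First I would reduce to a local problem. Since $X$ is compact, the estimates \eqref{DCcondition} defining $\cC^{M^{(a)}}(X)$ need only be verified on $X$ itself, so it suffices to produce, in a neighborhood in $X$ of each point of $X$, estimates of the form \eqref{DCcondition} for $M^{(a)}$ with $a$ independent of the point and of $M$. By a partition of unity, $f \in \cC^\infty(X)$ extends to some $F \in \cC^\infty(\R^d)$ with $F|_X = f$. (Near interior points one is immediately done with $a = 1$: from $\cC^M(\R,\interior(X)) \subseteq \cC^M(\R,X)$ and \Cref{result:3} we get $f|_{\interior(X)} \in \cA^M(\interior(X)) = \cC^M(\interior(X))$; but the argument below treats all points of $X$ uniformly.)

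Next I would apply \Cref{main2} to $X$ and, using compactness, pass to a finite subfamily: finitely many real analytic maps $\vh_\al \colon U_\al \to \R^d$, $\al \in A$, each a composite of finitely many local blow-ups with smooth centers, together with compact sets $K_\al \subseteq U_\al$ such that $\bigcup_{\al \in A} \vh_\al(K_\al)$ is a neighborhood of $X$, such that $\vh_\al^{-1}(X)$ is a union of full-dimensional quadrants, and such that $\vh_\al^* \cA^M(X) \subseteq \cC^{M^{(2)}}(\vh_\al^{-1}(X))$. In particular $F \o \vh_\al \in \cC^\infty(U_\al)$, and $f \o \vh_\al \in \cC^{M^{(2)}}(\vh_\al^{-1}(X))$ for every $\al$. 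Now I would invoke the loss-of-regularity results of \cite{ChaumatChollet99,BelottoBierstoneChow17}: if $\si$ is a blow-up with smooth center, $Z$ a closed subanalytic subset of its target, $g$ smooth near $Z$ and $g \o \si$ of class $\cC^N$ on the compact subsets of $\si^{-1}(Z)$, then $g$ is of class $\cC^{N^{(2)}}$ on the compact subsets of $Z$, with the doubling of the index independent of $N$. Applying this along the finitely many blow-ups composing $\vh_\al$ — a routine induction, the successively smaller domains being unions of full-dimensional quadrants and their subanalytic images — one descends from $f \o \vh_\al \in \cC^{M^{(2)}}$ to $F \in \cC^{M^{(2 p_\al)}}$ on the compact subsets of $\vh_\al(K_\al) \cap X$, where $p_\al$ is the product of the doublings and hence depends only on the number of blow-ups making up $\vh_\al$. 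Since $A$ is finite and $\bigcup_{\al \in A} (\vh_\al(K_\al) \cap X) = X$, the choice $a := 2 \max_{\al \in A} p_\al$ yields $f = F|_X \in \cC^{M^{(a)}}(X)$; the integers $p_\al$ are determined by the (fixed) rectilinearization of $X$ and the factor $2$ per blow-down is universal, so $a$ does not depend on $M$.

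The hard part will be the blow-down step: one must verify that the results of \cite{ChaumatChollet99,BelottoBierstoneChow17} are available in exactly the form used above — a function that is smooth near, and only $\cC^M$ with uniform estimates on the compact subsets of, a full-dimensional quadrant (respectively its subanalytic image) — and that the loss incurred by a single blow-up is a fixed multiplicative factor in the index. This is also where the hypothesis $f \in \cC^\infty(X)$ is used in an essential way: differentiating $f \o \si^{-1}$ directly yields bounds that diverge along the exceptional divisor, and it is precisely the a priori smoothness of $f$ that forces those divergent contributions to cancel.
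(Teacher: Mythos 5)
Your overall route is the one the paper takes: restrict to the finitely many maps $\vh_\al$ of the rectilinearization, use \Cref{main2} to get $f\o\vh_\al\in\cC^{M^{(2)}}(\vh_\al^{-1}(X))$ on the unions of quadrants, and then blow back down using the composite-function results of \cite{ChaumatChollet99} and \cite{BelottoBierstoneChow17}. The difference, and the place where your argument has a genuine gap, is the blow-down step. You propose an induction over the individual blow-ups composing $\vh_\al$, asserting that a single blow-up with smooth center costs a \emph{universal} multiplicative factor $2$ in the index. Neither reference proves such a statement, and it is not to be expected: the loss of regularity under blowing down is governed by {\L}ojasiewicz-type data (in particular the order of vanishing of the Jacobian determinant along the exceptional divisor, which grows with the codimension of the center), not by a fixed constant. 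Your induction also changes shape after the first step: once you blow down once, the set you are working over is no longer a union of quadrants but some subanalytic image, and the intermediate map is no longer a single blow-up of such a set, so the inductive hypothesis you would need is exactly the general composite statement you are trying to avoid.

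The paper sidesteps all of this by applying \cite[Theorem 1.4]{BelottoBierstoneChow17} \emph{once} to the entire composite $\vh_\al$ (after normalizing so that the Jacobian determinant of $\vh_\al$ is a monomial times a nowhere vanishing factor, a reduction you omit but which that theorem requires): this directly produces a positive integer $a_\al$ such that $f$ is of class $\cC^{M^{(a_\al)}}$ on $\vh_\al(Y_\al)$, with $a_\al$ depending on $\vh_\al$ but not on $M$; taking $a=\max_\al a_\al$ finishes the proof. So your conclusion and the independence of $a$ from $M$ are correct, but you should replace the per-blow-up factor-of-$2$ induction by a single invocation of the cited theorem for the whole map $\vh_\al$, accepting that $a_\al$ is whatever that theorem yields rather than $2^{p_\al}$.
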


\begin{proof}
	Let $\vh_\al$ be the finitely many mappings provided by \Cref{rectilinearization}.  
	We may assume that the Jacobian determinant of each $\vh_\al$ is a monomial times a 
	nowhere vanishing factor.
	Let $f \in \cC^\infty(X) \cap \cA^{M}(X)$. 
	By \Cref{main2},  
	$f \o \vh_\al \in \cC^{M^{(2)}}(Y_\al)$ 
	where $Y_\al$ is a union of quadrants in $\R^d$. 
	By \cite[Theorem 1.4]{BelottoBierstoneChow17}, for each $\al$ there is a positive integer $a_{\al}$ such that $f$ is of class $\cC^{M^{(a_\al)}}$ on 
	$\vh_\al(Y_\al)$. It follows that $f \in \cC^{M^{(a)}}(X)$, where $a = \max_\al a_\al$.  
\end{proof}

For $a \in \R_{>0}$ we may define the weight sequence $M^a$ by $M^a_k := (M_k)^a$. 
If $M = (M_k)$ has moderate growth (see \eqref{mg}) and $a$ is an integer, then there exists $C=C(a)$ such that
\[
	M_k^a \le M_{ak} \le C^k M_k^a \quad \text{ for all } k,  
\] 
i.e., $M^{(a)}$ and $M^a$ define the same Denjoy--Carleman class. Note also that $M^a$ has moderate growth whenever $M$ has.

Assume that for each $a>0$, the weight sequence $M^a$ is non-quasianalytic and define
\[
	 \widehat \cA^{M}(X) := \bigcap_{a>0} \cA^{M^a}(X) 
	\quad \text{ and } \quad \widehat  \cC^{M}(X) := \bigcap_{a>0} \cC^{M^a}(X). 
\]

\begin{theorem} \label{main:5}
    Let $M=(M_k)$ be a weight sequence of moderate growth 
    such that $M^a$ is non-quasianalytic for all $a>0$.
    Let $X \subseteq \R^d$ be a fat closed subanalytic set.
	Then 
	\begin{equation}
		\cC^\infty(X) \cap \widehat  \cA^{M}(X)	 = \widehat \cC^{M}(X). 	
	\end{equation}
	If $X$ is simple, then 	
	\begin{equation}
		\widehat  \cA^{M}(X)	 = \widehat  \cC^{M}(X). 	
	\end{equation}
\end{theorem}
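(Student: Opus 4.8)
The plan is to derive both identities from results already in hand: \Cref{prop:loss} (controlled loss of regularity) for the first, and the arc-$\cC_M^\infty$ analogue of \Cref{main:4} for the second.

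\textbf{First equality.} The inclusion $\widehat\cC^M(X)\subseteq \cC^\infty(X)\cap\widehat\cA^M(X)$ is routine: each $\cC^{M^a}(X)\subseteq\cC^\infty(X)$ by definition, and $\cC^{M^a}(X)\subseteq\cA^{M^a}(X)$ for every $a>0$, since $M^a$ is a (non-quasianalytic) weight sequence and this inclusion holds on any fat closed set by Fa\`a di Bruno's formula and log-convexity (as at the beginning of the proof of \Cref{main:2}; cf.\ \cite[Proposition 3.1]{RainerSchindl12}). For the converse, let $f\in\cC^\infty(X)\cap\widehat\cA^M(X)$ and fix $c>0$. For each $x\in X$ pick $\ep>0$ small and set $X_0:=\overline{\interior(X)\cap B(x,\ep)}$; this is a fat compact subanalytic neighborhood of $x$ in $X$, and $f|_{X_0}\in\cC^\infty(X_0)\cap\cA^{M^b}(X_0)$ for every $b>0$ (a $\cC^{M^b}$-curve in $X_0$ is one in $X$). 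Applying \Cref{prop:loss} to $X_0$ and to the non-quasianalytic, moderate-growth weight sequence $M^b$ yields a positive integer $a_0=a_0(X_0)$, \emph{independent of} $b$, with $f|_{X_0}\in\cC^{(M^b)^{(a_0)}}(X_0)$. Since $M$ has moderate growth, $(M^b)^{(a_0)}$ and $M^{a_0b}$ define the same Denjoy--Carleman class, so $f|_{X_0}\in\cC^{M^{a_0b}}(X_0)$; taking $b:=c/a_0$ gives the estimate \eqref{DCcondition} for $M^c$ in a neighborhood of $x$. As $x$ and $c$ were arbitrary and every compact subset of $X$ is covered by finitely many such neighborhoods, $f\in\cC^{M^c}(X)$ for all $c>0$, i.e.\ $f\in\widehat\cC^M(X)$.

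\textbf{Second equality.} By the first equality it suffices to prove $\widehat\cA^M(X)\subseteq\cC^\infty(X)$, for then $\widehat\cA^M(X)=\cC^\infty(X)\cap\widehat\cA^M(X)=\widehat\cC^M(X)$, the reverse inclusion being already known. Since $\widehat\cA^M(X)\subseteq\cA^M(X)\subseteq\cA_M^\infty(X)$, it is enough to show that $\cA_M^\infty(X)=\cC^\infty(X)$ for every simple fat closed subanalytic $X$. This is \Cref{main:4} with ``arc-smooth'' replaced by ``arc-$\cC_M^\infty$'', and its proof carries over after three substitutions: (a) every use of Boman's theorem \Cref{result:1} is replaced by \Cref{rem:Boman}, so that $f|_{\interior(X)}$ is still smooth and $f$ may be composed with the auxiliary curves $c_{x,v}$, $\Psi_{x,v}$ and the real analytic maps $\varphi_j$, all of which are of class $\cC^M$; (b) each invocation of $\cA^\infty(\bar Y)=\cC^\infty(\bar Y)$ for a H\"older set $\bar Y$ (used to extend the partial derivatives of $f$ and to bound $d_v^pf$) is replaced by $\cA_M^\infty(\bar Y)=\cC^\infty(\bar Y)$, which is part of \eqref{eq:smooth} in \Cref{main:1}; and (c) the smooth polygon parameterization of \cite[Lemma 2.8]{KM97} used in \Cref{claim1} is replaced by the $\cC^M$-polygon \Cref{special}, which applies after passing to a subsequence $(y_{\ell_j})$ converging to $y$ faster than the sequence $\la_j$ attached to $M$, so that the convergence is $1/\la_j$-convergence. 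All remaining ingredients --- \Cref{thm:PP}, \Cref{cor:PP}, \Cref{thm:Lreg}, \Cref{lem:quasiconvex}, \Cref{thm:subanreg}, \Cref{lem:converse}, and the fundamental theorem of calculus in \Cref{claim2} --- are metric or combinatorial and are used unchanged; passing to a subsequence in \Cref{claim1} does not weaken the contradiction, since $\|f^{(p)}(x_\ell)\|\to\infty$ along the original sequence.

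\textbf{Expected obstacle.} The delicate point is exactly step (c): turning the smooth auxiliary curves of \Cref{main:4} into genuine $\cC^M$-curves forces one to thin the relevant sequences to $1/\la_j$-convergent subsequences before invoking \Cref{special}, and to check that this preserves the arguments in \Cref{claim1}. A secondary, routine but fussy, issue is the localization in the first equality: \Cref{prop:loss} is phrased for compact sets, so near each point one must pass to the fat compact subanalytic neighborhood $\overline{\interior(X)\cap B(x,\ep)}$ and exploit that the exponent $a_0$ it produces is independent of the weight sequence, hence usable simultaneously for all $M^b$.
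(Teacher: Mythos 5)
Your proof of the first identity is exactly the paper's: the paper's entire proof reads ``the inclusion $\widehat\cC^M(X)\subseteq\cC^\infty(X)\cap\widehat\cA^M(X)$ is obvious; the converse follows from \Cref{prop:loss}'', and your expansion of that one line (applying \Cref{prop:loss} to each $M^b$, using that its exponent is independent of the weight sequence, and identifying $(M^b)^{(a_0)}$ with $M^{a_0b}$ via moderate growth, plus the routine localization to fat compact subanalytic neighborhoods) is precisely what is intended. For the second identity the paper gives no argument at all, and you correctly identify that it genuinely requires $\cA_M^\infty(X)=\cC^\infty(X)$ for simple fat closed subanalytic $X$ --- a statement the paper proves only for H\"older sets (\Cref{main:1}) and unions of quadrants (inside the proof of \Cref{main2}), not for general subanalytic $X$. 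Your substitutions (a)--(c) for adapting the proof of \Cref{main:4} are the right ones: \Cref{rem:Boman} in place of \Cref{result:1}, $\cA_M^\infty(\ol Y)=\cC^\infty(\ol Y)$ from \eqref{eq:smooth} for the H\"older sets $\ol Y$, and \Cref{special} with a further $1/\la_k$-convergent thinning in \Cref{claim1} (which is harmless since the unboundedness of $\|f^{(p)}(x_\ell)\|$ persists along subsequences). So your write-up is correct and, on the second identity, actually more complete than the paper's.
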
 

\begin{proof}
	The inclusion $\widehat  \cC^{M}(X) \subseteq \cC^{\infty}(X) \cap \widehat  \cA^{M}(X)$ is obvious.
	The converse inclusion follows from \Cref{prop:loss}. 	
\end{proof}

\begin{remark}
	Instead of \cite[Theorem 1.4]{BelottoBierstoneChow17} one can also use the results of \cite{ChaumatChollet99}.
\end{remark}

\section{Applications} \label{sec:applications}

\subsection{Solutions of real analytic equations}

\begin{theorem} \label{thm:aneq}
	Let $U \subseteq \R^{d+1}$ be open and let $H : U \to \R$ be a real analytic function (not identically zero). 
	Let $X \subseteq \R^d$ be a closed set such that for all $z \in \p X$ there 
	is a closed fat subanalytic set $X_z$ such that $z \in X_z \subseteq X$; e.g.\ $X$ itself is fat and subanalytic 
	or a H\"older set.	 
	If $f \in \cC^\infty(X)$ satisfies $H(x,f(x)) = 0$ for all $x \in X$, then $f$ extends to 
	a holomorphic function on a neighborhood of $X$ in $\C^d$. 
\end{theorem}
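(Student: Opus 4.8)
The plan is to reduce to the real analytic admissibility results already at hand. Since $f\in\cC^\infty(X)$ is assumed, \Cref{main:7} applies as soon as one knows that $f$ is real analytic along real analytic curves in $X$, and it then produces directly the holomorphic extension of $f$ to a neighbourhood of $X$ in $\C^d$ (the hypothesis on $X$ in \Cref{main:7} is exactly the one imposed here). So let $c:\R\to X$ be real analytic and set $g:=f\o c\in\cC^\infty(\R,\R)$; by hypothesis $H(c(t),g(t))=0$ for all $t$. The whole matter reduces to the classical one-variable fact that a $\cC^\infty$ function satisfying a non-trivial real analytic equation is real analytic.

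To carry that out, fix $t_0$, put $y_0:=g(t_0)$ (so $(c(t_0),y_0)\in U$), and consider $\Phi(t,y):=H(c(t),y)$, real analytic near $(t_0,y_0)$ and vanishing identically along $t\mapsto(t,g(t))$. Assuming for the moment that $\Phi\not\equiv 0$ near $(t_0,y_0)$, write $\Phi(t,y)=(t-t_0)^k\Phi_1(t,y)$ with $\Phi_1(t_0,\cdot)\not\equiv 0$; then $\Phi_1(t,g(t))=0$ for $t\ne t_0$, hence for all $t$ near $t_0$, so one may replace $\Phi$ by $\Phi_1$. Weierstrass preparation gives $\Phi_1=(\text{unit})\cdot P$ with $P(t,y)=(y-y_0)^n+a_{n-1}(t)(y-y_0)^{n-1}+\cdots+a_0(t)$, the $a_i$ real analytic with $a_i(t_0)=0$, so $g(t)-y_0$ is a root of the Weierstrass polynomial $P(t,\cdot)$ for each $t$. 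By Puiseux's theorem the continuous root branches of $P$ are convergent fractional power series; since $g\in\cC^\infty$, its Taylor expansion at $t_0$ involves only integer powers, hence is a convergent power series, and a \L ojasiewicz-type separation estimate for the root branches of $P$ pins $g$ down near $t_0$ to the corresponding real analytic branch. Thus $g$ is real analytic at $t_0$, and since $t_0$ was arbitrary, $g\in\cC^\om(\R)$.

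The main obstacle is precisely the degenerate case $\Phi\equiv 0$ near $(t_0,y_0)$, i.e.\ $H$ vanishing on the (two-dimensional) set (arc of $c$)$\times$(interval): for $d=1$ this already forces $H\equiv 0$, contrary to hypothesis, but for $d\ge 2$ it has to be excluded by a global argument rather than a pointwise one. To deal with it I would first note that the statement is local, so one may assume $X$ fat closed subanalytic (the reduction of general $X$ to the $X_z$ being the gluing argument of \Cref{main:7}), and then pass to a uniformization $\vh:M\to\R^d$ with $X=\vh(M)$, $\vh$ proper real analytic and generically of rank $d$ (possible because $\dim X=d$), exactly as in the proof of \Cref{main:6}. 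Then $f\o\vh\in\cC^\infty(M)$ satisfies $\tilde H(\cdot,f\o\vh)=0$ with $\tilde H(p,y):=H(\vh(p),y)$ real analytic, and $\tilde H$ cannot vanish on any non-empty open subset of $M\times\R$ — otherwise, $\vh$ being generically a submersion, $H$ would vanish on an open subset of $U$, contradicting $H\not\equiv 0$. Hence $f\o\vh$ is a $\cC^\infty$ solution of a genuinely non-trivial real analytic equation on a manifold, so it is real analytic (the multivariable form of the fact above, provable by the same Weierstrass/Puiseux technique or taken from the literature). Equivalently, $f$ is real analytic along real analytic curves in $X$, and from here one concludes exactly as in the proof of \Cref{main:6}: at each $x\in X$ choose $y\in\vh^{-1}(x)$ where $\vh$ has generic rank $d$, use the Eakin--Harris/Gabrielov strong injectivity of $\vh^*$ to get convergence of the Taylor series $F_x$ of $f$ at $x$, and glue the holomorphic germs $F_x$ by means of \Cref{lem:shrink} and the identity theorem into a holomorphic extension of $f$ on a neighbourhood of $X$ in $\C^d$.
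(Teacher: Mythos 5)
Your proposal is correct and, once you abandon the curve-by-curve attempt, it follows essentially the same route as the paper: reduce to the fat closed subanalytic case via the sets $X_z$ and the gluing of \Cref{lem:shrink}, uniformize by a proper real analytic $\vh:M\to\R^d$ of generic rank $d$ so that $(p,y)\mapsto H(\vh(p),y)$ is a genuinely non-trivial analytic equation, invoke the classical theorem that smooth solutions of non-trivial real analytic equations are real analytic (which the paper simply cites from Bochnak, Siciak, and Malgrange rather than re-deriving via Weierstrass/Puiseux), and then conclude exactly as in the proof of \Cref{main:6}. The degenerate case you flag is precisely why the paper works on $M$ from the outset instead of along individual curves, so your final paragraph is the paper's proof.
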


\begin{proof}
	Suppose first that $X \subseteq \R^d$ is fat closed subanalytic.
	As in the proof of \Cref{main:6} there is a proper real analytic map $\vh : M \to \R^d$ 
	with $X = \vh(M)$. 
	Then $(z,y) \mapsto H(\vh(z),y)$ is not identically zero. 
	By the classical version of this theorem, 
	cf.\ \cite{Bochnak70}, \cite{Siciak70}, 
	and \cite{Malgrange67}, we may conclude that $z \mapsto (f\o \vh)(z)$ is real analytic on $M$. 
	The proof of \Cref{main:6} (in \Cref{sec:BS}) then yields the assertion.

	In the general case, fix $z \in \p X$ and a closed fat subanalytic set $X_z$ 
	with $z \in X_z \subseteq X$.
	Then $f|_{X_z} \in \cC^\infty(X_z)$ satisfies $H(x,f(x)) = 0$ for all $x \in X_z$. 
	Thus, by the first part of the proof, $f|_{X_z}$ extends to a holomorphic function on a neighborhood 
	of $X_z$ in $\C^d$.  
	That these local extensions glue to the desired global extension follows from \Cref{lem:shrink} as in 
	the proof of \Cref{main:6}.  
\end{proof}

	We obtain the following corollary for \emph{Nash functions}, i.e., 
	real analytic functions $f : U \to \R$ defined in an open semialgebraic set $U \subseteq \R^d$ 
	which satisfy a non-trivial polynomial equation $P(x,f(x))=0$ for all $x \in U$.

\begin{corollary}
	Let $X \subseteq \R^d$ be a fat closed semialgebraic set 
	and let $f : \interior (X) \to \R$ be a Nash function whose partial derivatives of all orders 
	extend continuously to the boundary of $X$. 
	Then $f$ is the restriction of a Nash function on an open neighborhood of $X$.
\end{corollary}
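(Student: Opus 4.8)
The plan is to derive the corollary from Theorem~\ref{thm:aneq}, applied to the algebraic equation satisfied by $f$, and then to upgrade the resulting real analytic extension to a Nash function.

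First I would record the equation and the smoothness. Since $f$ is a Nash function on $\interior(X)$, there is a nonzero polynomial $P\in\R[x_1,\dots,x_d,y]$ with $P(x,f(x))=0$ for all $x\in\interior(X)$. The set $X$ is fat, closed and semialgebraic, hence subanalytic, and by Theorem~\ref{thm:subanreg} it satisfies condition~(6) of Lemma~\ref{lem:converse}. Since by hypothesis the partial derivatives of $f$ of all orders extend continuously to $X$, Lemma~\ref{lem:converse} gives $f\in\cC^\infty(X)$; in particular $f$ is the restriction of a $\cC^\infty$-function on $\R^d$. By continuity of $P$ and density of $\interior(X)$ in $X$, the identity $P(x,f(x))=0$ holds for all $x\in X$.

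Next I would apply Theorem~\ref{thm:aneq} with $H:=P$ (a nonzero real analytic function on $\R^{d+1}$) to $X$ itself, which is fat, closed and subanalytic: $f$ extends to a holomorphic function on a neighborhood of $X$ in $\C^d$, and hence its restriction to $\R^d$ is a real analytic function $\tilde f$ on an open neighborhood $W$ of $X$ with $\tilde f|_{\interior(X)}=f$. After discarding the connected components of $W$ not meeting $X$, each remaining component $W'$ meets $\interior(X)$ (because $X=\overline{\interior(X)}$), and $x\mapsto P(x,\tilde f(x))$ is real analytic on $W$ and vanishes on the nonempty open subset $\interior(X)\cap W'$; by the identity theorem, $P(x,\tilde f(x))=0$ for all $x\in W$.

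It remains to arrange a semialgebraic domain. If $X$ is compact, then $\rho:=\operatorname{dist}(X,\R^d\setminus W)>0$ and $V:=\{x\in\R^d:\operatorname{dist}(x,X)<\rho\}$ is an open semialgebraic neighborhood of $X$ contained in $W$ (the distance to the semialgebraic set $X$ is semialgebraic by Tarski--Seidenberg), so $\tilde f|_V$ is real analytic on the open semialgebraic set $V$ and satisfies $P(x,\tilde f(x))=0$ there; that is, $\tilde f|_V$ is a Nash function restricting to $f$ on $\interior(X)$. In general, one instead combines the standard facts that a real analytic function satisfying a nontrivial polynomial identity is locally a Nash function, and that a Nash germ along a closed semialgebraic set extends to a Nash function on an open semialgebraic neighborhood. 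The only real obstacle is this last step in the noncompact case: the analyticity domain $W$ produced by Theorem~\ref{thm:aneq} need not contain any semialgebraic neighborhood of $X$, so one must pass from the local (germwise) Nash data along $X$ to a single Nash function on a global semialgebraic neighborhood; in the compact case this difficulty disappears.
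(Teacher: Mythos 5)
Your argument follows the same route as the paper: the paper's entire proof is the one-line observation that the real analytic extension furnished by Theorem~\ref{thm:aneq} ``clearly also satisfies the defining polynomial equation,'' which is exactly your steps through the identity-theorem argument (your verification that $f\in\cC^\infty(X)$ via Theorem~\ref{thm:subanreg} and Lemma~\ref{lem:converse} is a correct unpacking of the hypothesis). The one place you go beyond the paper is in worrying that a Nash function must live on an open \emph{semialgebraic} neighborhood, a point the paper's proof silently elides; your distance-neighborhood argument settles the compact case, and you are right that in the noncompact case the neighborhood $W$ from Theorem~\ref{thm:aneq} need not contain any semialgebraic neighborhood of $X$, so that one must invoke the extension of Nash germs along closed semialgebraic sets (a nontrivial but known result) to finish. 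This is a legitimate refinement rather than a divergence in method.
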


\begin{proof}
	The extension of $f$ clearly also satisfies the defining polynomial equation.
\end{proof}

\subsection{Composite real analytic functions}

Suppose that $\vh : M \to \R^d$ is a real analytic map, where $M$ is a real analytic manifold.
Assume that $g \in \cC^\infty(\R^d)$ and $f = g \o \vh \in \cC^\om(M)$.
Let $X := \vh(M)$. 
Our results yield a sufficient condition for $g|_X$ to admit a real analytic extension to some 
open neighborhood of $X$.

\begin{corollary} \label{fep}
	Let $\vh : M \to \R^d$ be real analytic and such that:
	\begin{enumerate}
		\item $X := \vh(M)$ is a fat closed subanalytic subset of $\R^d$.
		\item Each $c \in \cC^\om(\R,X)$ admits a lifting $\tilde c \in \cC^\om(\R,M)$, i.e., $c = \vh \o \tilde c$.
	\end{enumerate}
	Then, for each $g \in \cC^\infty(\R^d)$ with $g \o \vh \in \cC^\om(M)$, 
	there exists a holomorphic function $G$ defined in an open neighborhood of $X$ in $\C^d$ such that 
	$g|_X = G|_X$. 
 \end{corollary}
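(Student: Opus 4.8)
The plan is to reduce everything to the Bochnak--Siciak result on fat closed subanalytic sets, i.e.\ \Cref{main:6}, using the real analytic lifting hypothesis to transport regularity from $M$ down to $X$. First I would set $f := g|_X$. Since $g \in \cC^\infty(\R^d)$, the function $f$ belongs to $\cC^\infty(X)$ in the sense of the paper (take $U = \R^d$ and $F = g$ in the definition of smoothness on a closed set). By \Cref{main:6}, it then suffices to verify that $f$ is real analytic along real analytic curves in $X$, that is, $f_* \cC^\om(\R,X) \subseteq \cC^\om(\R,\R)$.

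The key step is the following. Let $c \in \cC^\om(\R,X)$ be arbitrary. By hypothesis (2) there is a real analytic lifting $\tilde c \in \cC^\om(\R,M)$ with $c = \vh \o \tilde c$. Hence $f \o c = g \o \vh \o \tilde c = (g \o \vh) \o \tilde c$. Now $g \o \vh \in \cC^\om(M)$ by assumption and $\tilde c \in \cC^\om(\R,M)$, so $(g\o\vh)\o\tilde c$ is a composite of real analytic maps between real analytic manifolds and is therefore real analytic. Thus $f \o c \in \cC^\om(\R,\R)$, and since $c$ was arbitrary, $f \in \cC^\infty(X)$ is real analytic on all real analytic curves in $X$.

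Finally, by hypothesis (1) the set $X = \vh(M)$ is fat, closed and subanalytic, so \Cref{main:6} applies and produces a holomorphic function $G$ on an open neighborhood of $X$ in $\C^d$ with $G|_X = f = g|_X$, which is the desired conclusion.

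I do not expect a genuine obstacle here: once \Cref{main:6} is available, the corollary is a short formal argument, and the only point requiring a moment's care is matching the hypotheses of \Cref{main:6} exactly --- namely checking that the lifting hypothesis really delivers real analyticity of $f\o c$ for \emph{every} $c \in \cC^\om(\R,X)$, and that a globally smooth restriction automatically lies in $\cC^\infty(X)$. All the substantive work (subanalytic geometry, the Eakin--Harris/Gabrielov theorem, the gluing via \Cref{lem:shrink}) has already been carried out in the proof of \Cref{main:6}.
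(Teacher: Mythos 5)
Your argument is correct and is exactly the intended reduction: the paper's own proof of this corollary is simply ``Follows from \Cref{main:6}'', and your write-up supplies precisely the two verifications that one-liner leaves implicit (that $g|_X \in \cC^\infty(X)$ because $g$ is globally smooth, and that the lifting hypothesis together with $g\o\vh\in\cC^\om(M)$ gives real analyticity of $g|_X\o c = (g\o\vh)\o\tilde c$ for every $c\in\cC^\om(\R,X)$). No gaps.
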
 

\begin{proof}
	Follows from \Cref{main:6}.	
\end{proof}

Conditions for the existence of a smooth solution $g$ of the equation $f = g \o \vh$ have been intensively studied; 
see \cite{BierstoneMilman82}, \cite{BierstoneMilmanPawlucki96}, \cite{BierstoneMilman98}. 
 
\begin{remark}
	For instance, the conditions of the corollary are satisfied in the following situation.
	Let $\rh : G \to \on{O}(V)$ be a coregular finite dimensional orthogonal representation of a compact Lie group. 
	Let $\si = (\si_1,\ldots,\si_d)$ be a minimal system of generators of the algebra $\R[V]^G$ of 
	$G$-invariant polynomials.	 
	Schwarz' theorem \cite{Schwarz75} (see also \cite{Mather77}) 
holds that for each $G$-invariant $f \in \cC^\infty(V)$ there exists 
$g \in \cC^\infty(\R^d)$ such that $f = g \o \si$.	
The set $X = \si(V)$ is closed semialgebraic and fat, by the assumption that $\rh$ is coregular, 
	cf.\ \cite{PS85}.
	Real analytic curves in $X$ admit real analytic liftings to $V$, 
by \cite{AKLM00} and \cite[Theorem 4]{ParusinskiRainer14}. 
The corollary implies that every $G$-invariant real analytic function $f$ on $V$ is of the form $f=g \o \si$,
where $g$ is a holomorphic function defined in an open neighborhood of $X$ in $\C^d$. 
A more general result (with a different proof) is due to Luna \cite{Luna76}.
\end{remark}

\subsection{Division of smooth functions and pseudo-immersions}

Statements about smooth functions on open sets can sometimes be reduced to corresponding statements 
for functions of one real variable,  
thanks to Boman's theorem \ref{rem:Boman}. 
This principle extends to $\cA^\infty$-admissible sets. 
We illustrate this using two selected examples.
The first concerns division of smooth functions:

\begin{theorem} \label{thm:division}
	Suppose that $X$ is a H\"older set or a simple fat closed subanalytic subset of $\R^d$. 
	If $f,g : X \to \C$ satisfy
	\begin{enumerate}
		\item $g,fg,f^m \in \cC^\infty(X,\C)$, and
		\item $|f(x)| \le C \, |g(x)|^\al$ for all $x \in X$, 
	\end{enumerate}
	for some $m \in \N_{\ge1}$ and $C,\al>0$, then $f \in \cC^\infty(X,\C)$.
\end{theorem}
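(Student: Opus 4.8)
The plan is to use the admissibility results already established in the paper to localize the statement onto smooth arcs, thereby reducing it to the classical one-variable division theorem of Joris and Preissmann. Concretely, by \Cref{main:1} (when $X$ is a H\"older set) or by \Cref{main:4} (when $X$ is a simple fat closed subanalytic set), $X$ is $\cA^\infty$-admissible; treating real and imaginary parts separately, this gives $\cA^\infty(X,\C) = \cC^\infty(X,\C)$. Hence it suffices to show $f \in \cA^\infty(X,\C)$, i.e.\ that $f \o c \in \cC^\infty(\R,\C)$ for every $c \in \cC^\infty(\R,X)$. Fix such a curve $c$ and set $h := f \o c$ and $\ga := g \o c$. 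Since the composite of a smooth function on $X$ with $c$ is smooth, hypothesis (1) yields $\ga$, $h\ga = (fg)\o c$ and $h^m = (f^m)\o c$ in $\cC^\infty(\R,\C)$, while hypothesis (2) yields $|h(t)| \le C\,|\ga(t)|^\al$ for all $t \in \R$.

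We are thus reduced to the one-variable statement: if $h,\ga : \R \to \C$ satisfy $\ga, h\ga, h^m \in \cC^\infty(\R,\C)$ and $|h| \le C|\ga|^\al$, then $h \in \cC^\infty(\R,\C)$. Off the zero set $Z := \ga^{-1}(0)$ this is immediate, since there $h = h\ga/\ga$; moreover $h$ is continuous everywhere, as near any $t_0 \in Z$ the bound forces $h(t_0)=0$ and $|h(t)| \le C|\ga(t)|^\al \to 0$. At a point $t_0 \in Z$ where $\ga$ vanishes to finite order $k$ one can argue directly: writing $\ga(t) = (t-t_0)^k u(t)$ with $u$ smooth and $u(t_0)\ne 0$, the bound $|h\ga(t)| \le C|\ga(t)|^{1+\al}$ shows (using $\al>0$) that the smooth function $h\ga$ has a zero of order $>k$ at $t_0$, so $h\ga/(t-t_0)^k$ extends smoothly across $t_0$ and coincides with $hu$ on a punctured neighborhood; as $u$ is nonvanishing near $t_0$, $h$ is smooth there. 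The remaining case, in which $\ga$ is flat at $t_0$ (whence $h$ vanishes to infinite order at $t_0$ and the smooth function $h^m$ is flat there), is exactly the situation covered by the division theorem of Joris and Preissmann \cite{JorisPreissmann90}, which gives $h \in \cC^\infty$ near $t_0$. Hence $h \in \cC^\infty(\R,\C)$, and the theorem follows.

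The substantive inputs are the admissibility theorems \Cref{main:1} and \Cref{main:4}, established earlier, and the one-variable result of \cite{JorisPreissmann90}; once the problem is restricted to a single smooth arc there is no further obstacle, the only point needing (routine) verification being that composition with a smooth curve preserves the three smoothness hypotheses and the H\"older-type bound. The one genuine subtlety to watch is the complex-valued setting: one invokes the division theorem in its form for $\C$-valued functions (equivalently, after the elementary reduction above one is left with the flat case, where it is applied to germs at $t_0$), and the passage $\cA^\infty \rightsquigarrow \cA^\infty(\,\cdot\,,\C)$ must be made explicit.
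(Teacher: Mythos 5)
Your proposal is correct and follows essentially the same route as the paper: the proof there is exactly the reduction via the $\cA^\infty$-admissibility of $X$ (Theorem \ref{main:1} for H\"older sets, Theorem \ref{main:4} for simple fat closed subanalytic sets) to the one-variable case $X=\R$, which is \cite[Theorem 1]{JorisPreissmann90}. Your additional elaborations (real/imaginary parts, the direct treatment of finite-order zeros before invoking Joris--Preissmann for the flat case) are sound but not needed beyond what the cited one-variable theorem already provides.
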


\begin{proof}
	This follows from \cite[Theorem 1]{JorisPreissmann90} which is precisely the case $X=\R$, 
	\Cref{main:1}, and \Cref{main:4}.
\end{proof}

In \cite{JorisPreissmann90} this theorem (for $X=\R$) was used to prove that certain maps are 
pseudo-immersions. 
A $\cC^\infty$-map $p : N \to M$ between $\cC^\infty$-manifolds is a \emph{pseudo-immersion} if for each continuous 
map $f : P \to N$, where $P$ is a $\cC^\infty$-manifold, $p \o f \in \cC^\infty$ implies $f \in \cC^\infty$; 
see also \cite{JorisPreissmann87}.  
Pseudo-immersivity of a smooth map is a local property. So it is enough to consider germs of 
smooth maps $p : (\R^n,0) \to (\R^m,0)$. By Boman's theorem \ref{result:1}, the defining universal 
property must be checked only for smooth curves: $p$ is a pseudo-immersion if and only if for each 
(continuous) curve 
$c : \R \to \R^n$ we have the implication $p \o c \in \cC^\infty \implies c \in \cC^\infty$.

The results of \Cref{main:1} and \Cref{main:4} entail the following.    

\begin{theorem} \label{thm:pseudoimmersion}
	Let $p : \R^n \to \R^m$ be a pseudo-immersion. Then the universal property of $p$ extends to 
	maps $f : X \to \R^n$, where $X \subseteq \R^d$ is $\cA^\infty$-admissible, in particular, for $X$
	a H\"older set or 
	a simple fat closed subanalytic subset of $\R^d$.
\end{theorem}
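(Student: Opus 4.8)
The plan is to reduce the statement to the curve-wise characterization of pseudo-immersions recalled just above (a consequence of Boman's theorem \ref{result:1}) and then to exploit $\cA^\infty$-admissibility of $X$. So let $f : X \to \R^n$ be continuous with $p \o f \in \cC^\infty(X,\R^m)$; the goal is to show $f \in \cC^\infty(X,\R^n)$. Since $X$ is $\cA^\infty$-admissible and a map into $\R^n$ is smooth (resp.\ arc-smooth) on $X$ precisely when each of its components is, it suffices to prove that $f \in \cA^\infty(X,\R^n)$, i.e., that $f \o c \in \cC^\infty(\R,\R^n)$ for every $c \in \cC^\infty(\R,X)$.

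Fix such a curve $c$. First I would observe that $\ga := f \o c : \R \to \R^n$ is continuous, being a composite of continuous maps. Next, the chain rule for smooth functions on closed sets --- the inclusion $\cC^\infty(X) \subseteq \cA^\infty(X)$ used at the beginning of the proof of \Cref{main:1} --- gives $p \o \ga = (p \o f) \o c \in \cC^\infty(\R,\R^m)$: near each point of $\R$ the smooth function $p \o f$ on $X$ agrees with the restriction of a genuine smooth function defined on an open subset of $\R^d$, and that function may be precomposed with $c$. Now apply the defining universal property of the pseudo-immersion $p$ to the continuous curve $\ga$ (taking $P = \R$): since $p \o \ga$ is smooth, $\ga = f \o c$ is smooth. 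As $c$ was arbitrary, $f \in \cA^\infty(X,\R^n)$, whence $f \in \cC^\infty(X,\R^n)$ by admissibility. The two distinguished cases follow at once: H\"older sets are $\cA^\infty$-admissible by \Cref{main:1} and simple fat closed subanalytic sets by \Cref{main:4}.

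I do not expect a genuine obstacle; the argument is a formal combination of Boman's theorem in the form recalled before the statement with the admissibility results of \Cref{sec:mainA} and \Cref{sec:subanalytic}. The only point that needs a little care is the bookkeeping of continuity hypotheses: since a pseudo-immersion $p$ need not be injective, continuity of $p \o f$ does not by itself force continuity of $f$, so one must keep the hypothesis that $f$ (equivalently, each $f \o c$ along smooth curves $c$ in $X$) is continuous in order to be allowed to invoke the universal property of $p$.
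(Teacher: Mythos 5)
Your argument is correct and coincides with the one the paper intends (the paper only sketches it): reduce the universal property of $p$ to smooth curves via Boman's theorem, note that $p\o f\o c$ is smooth for every $c\in\cC^\infty(\R,X)$ by the chain rule for $\cC^\infty(X)$, conclude that $f\o c$ is smooth from the curve characterization of pseudo-immersions, and finish with $\cA^\infty$-admissibility (componentwise). Your closing remark on keeping the continuity hypothesis on $f$ is a worthwhile precision, since $p$ need not be injective.
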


For instance, if $f : X \to \C$ is continuous and $f^2,f^3 \in \cC^\infty(X,\C)$, then $f \in \cC^\infty(X,\C)$.
In addition, by \Cref{thm:aneq}, if at least one of $f^2$ or $f^3$ is real analytic, then also $f$ is real 
analytic.

\section{Complements and examples} \label{sec:complements}

\subsection{\texorpdfstring{$\cC^M$}{CM}-extensions} \label{sec:extension}

Let $X \subseteq \R^d$ be a H\"older set or a fat closed subanalytic set.
By \Cref{lem:converse}, \Cref{prop:alpharegular}, and \Cref{thm:subanreg}, 
any function $f : X \to \R$ which satisfies \Cref{lem:converse}(3) extends to a $\cC^\infty$-function on $\R^d$.
Let us investigate this in the ultradifferentiable case. 
For strongly regular weight sequences $M$ there is a $\cC^M$-version of Whitney's extension theorem \cite{Bruna80}.

\begin{lemma} \label{lem:extension}
	Let $X\subseteq \R^d$ be a fat compact set either in 
	$\sH(\R^d)$ or subanalytic. 
	Suppose there is a positive integer $m$ 
	and a constant $D>0$, such that 
	any two points $x,y \in X$ can be joined by a rectifiable path $\ga$ in $X$ 
	and 
	\begin{equation} \label{m-regular}
		\ell(\ga)^m \le D |x-y|.
	\end{equation}
	Let $M$ be a weight sequence. 
	Then each $f \in \cC^M(X)$ defines a Whitney jet on $X$ of class $\cC^N$  
	where $N_k := M_{mk}$, i.e., there exist constants $C,\rh>0$ such that 
	\begin{gather} \label{eq:DCjetN}
	 	|f^{(\al)}(x)| \le C \rh^{|\al|} |\al|!\, N_{|\al|}, \quad  \al \in \N^d,~x \in X, 
		\\	 \label{eq:remainder}
		|(R^p_x f)^\al(y)| \le C \rh^{p+1} |\al|!\, N_{p+1} |x-y|^{p+1 - |\al|}, \quad p \in \N, ~ |\al| \le p, ~ x,y \in X, 
	\end{gather}
	where 
	\[	
		(R^p_x f)^\al(y) = f^{(\al)}(y) - \sum_{|\be| \le p - |\al|} \frac{f^{(\al + \be)}(x)}{\be!} (y-x)^{\be}.
	\]
\end{lemma}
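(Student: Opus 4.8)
The plan is to get \eqref{eq:DCjetN} for free and to reduce \eqref{eq:remainder} to one‑variable Taylor estimates \emph{along a path inside $X$}, carried out at a deliberately inflated order.

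First some reductions. Since $X$ is fat, every $f\in\cC^\infty(X)$ has well‑defined continuous derivatives $f^{(\al)}:X\to\R$, $\al\in\N^d$, which restrict on $\interior(X)$ to the usual partials and agree near each point of $X$ with the derivatives of any local smooth extension of $f$ (this is the implication $(2)\Rightarrow(3)$ of \Cref{lem:converse}); thus $\cC^M(X)$ is exactly the set of $f\in\cC^\infty(X)$ for which \eqref{DCcondition} holds on compacta. As $M$ is a weight sequence it is increasing, and $m\ge1$, so $M_{|\al|}\le M_{m|\al|}=N_{|\al|}$; hence \eqref{DCcondition} already gives \eqref{eq:DCjetN} with the same constants. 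After rescaling $X$ (which only alters $M$ by $M_k\mapsto C^kM_k$, not the class) I may assume $\on{diam} X\le1$, so that $|x-y|^s\le|x-y|^t$ whenever $s\ge t\ge0$.

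Now \eqref{eq:remainder}. Fix $x,y\in X$, a multi‑index $\al$, and $p\ge|\al|$, and set $p':=m(p+1)-1\ (\ge p)$ and $q':=p'-|\al|$. Comparing the degree‑$(p-|\al|)$ and degree‑$q'$ Taylor polynomials of $f^{(\al)}$ at $x$ gives
\[
 (R^p_x f)^\al(y)=(R^{p'}_x f)^\al(y)+\sum_{p-|\al|<|\be|\le q'}\frac{f^{(\al+\be)}(x)}{\be!}\,(y-x)^\be .
\]
In the finite sum I would bound each term by \eqref{DCcondition} together with $\binom{|\al|+|\be|}{|\al|}\le 2^{|\al|+|\be|}$, $M_{|\al|+|\be|}\le M_{p'}\le N_{p+1}$, $|x-y|^{|\be|-(p+1-|\al|)}\le1$ and the bound $(p'+1)^d$ on the number of terms; this yields a bound of the shape $C\rh^{p+1}|\al|!\,N_{p+1}|x-y|^{p+1-|\al|}$. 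For the main term $(R^{p'}_x f)^\al(y)$ I would take a rectifiable path $\ga$ in $X$ joining $x$ to $y$ with $\ell(\ga)^m\le D|x-y|$ (supplied by the hypothesis, in turn by \Cref{prop:alpharegular} for H\"older sets and \Cref{thm:subanreg} for fat closed subanalytic sets), reparametrise it by arc length on $[0,\ell(\ga)]$ so it is $1$‑Lipschitz, and patch the local smooth extensions of $f$ into one smooth $\tilde F$ defined near the compact set $\ga([0,\ell(\ga)])\subseteq X$. Iterating the fundamental theorem of calculus $q'+1$ times for $t\mapsto D^k(\p^\al\tilde F)(\ga(t))$ gives Taylor's formula with integral remainder along $\ga$,
\[
 (R^{p'}_x f)^\al(y)=\int_{0<t_{q'+1}<\cdots<t_1<\ell(\ga)}D^{q'+1}(\p^\al\tilde F)\big(\ga(t_{q'+1})\big)\big[\ga'(t_{q'+1}),\dots,\ga'(t_1)\big]\,dt.
\]
Since $\ga(t)\in X$ one has $\|D^{q'+1}(\p^\al\tilde F)(\ga(t))\|\le\|f^{(p'+1)}(\ga(t))\|\le C\rh^{p'+1}(p'+1)!\,M_{p'+1}$ by \eqref{DCcondition}, $|\ga'|=1$ a.e., and the simplex has measure $\ell(\ga)^{q'+1}/(q'+1)!$, so
\[
 |(R^{p'}_x f)^\al(y)|\le C\,\rh^{p'+1}\frac{(p'+1)!}{(q'+1)!}\,M_{p'+1}\,\ell(\ga)^{q'+1}.
\]
Now $p'+1=m(p+1)$ makes $M_{p'+1}=N_{p+1}$; moreover $\ell(\ga)^{q'+1}\le(D|x-y|)^{(m(p+1)-|\al|)/m}\le D^{p+1}|x-y|^{p+1-|\al|}$, the exponent being $p+1-|\al|/m\ge p+1-|\al|$; and $\tfrac{(p'+1)!}{(q'+1)!}=\prod_{j=0}^{|\al|-1}(m(p+1)-j)\le(m(p+1))^{|\al|}\le m^{p+1}(p+1)^{|\al|}\le(me)^{p+1}|\al|!$ using $(p+1)^{|\al|}\le e^{p+1}|\al|!$. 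Collecting constants gives $|(R^{p'}_x f)^\al(y)|\le C(\rh')^{p+1}|\al|!\,N_{p+1}|x-y|^{p+1-|\al|}$, and adding the two pieces finishes \eqref{eq:remainder}.

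The heart of the matter — and the only nonroutine step — is the choice $p'=m(p+1)-1$: it is dictated by the two competing demands that the path‑length factor $\ell(\ga)^{q'+1}\sim|x-y|^{(q'+1)/m}$ still carry at least the power $|x-y|^{p+1-|\al|}$, and that the top derivative involved be $f^{(m(p+1))}$, whose \eqref{DCcondition}‑bound brings in $M_{m(p+1)}=N_{p+1}$. This is precisely where the $m$‑fold loss of regularity is incurred, and it is genuine: an $m$‑regular set may force detours of length $\sim|x-y|^{1/m}\gg|x-y|$ around outward cusps, so no path argument at the ``honest'' order $p$ can succeed. The H\"older/subanalytic hypothesis is used only to guarantee such paths exist inside $X$; everything else is the elementary calculus above plus routine bookkeeping. (Once $N$ is strongly regular one then feeds this Whitney jet into the $\cC^N$‑Whitney extension theorem \cite{Bruna80} to obtain a $\cC^N$‑function on $\R^d$.)
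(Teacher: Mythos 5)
Your proof is correct and follows essentially the same route as the paper: deduce \eqref{eq:DCjetN} from the monotonicity of $M$, compare $(R^p_xf)^\al$ with the remainder at the inflated order $m(p+1)-1$, bound that remainder by a Taylor-type integral along a path in $X$ of length $\le (D|x-y|)^{1/m}$, and estimate the intermediate sum directly. The only (immaterial) difference is the form of the integral remainder: you iterate the fundamental theorem of calculus to get a simplex integral, whereas the paper applies it once to $t\mapsto T^{p-|\al|}_{\si(t)}f^{(\al)}(y)$, where only the top-order terms survive.
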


\begin{proof}
	Let $f \in \cC^M(X)$.
	Now \eqref{eq:DCjetN} is clearly satisfied since we even have 
	\begin{equation} \label{eq:DCjet}
	 	|f^{(\al)}(x)| \le C \rh^{|\al|} |\al|!\, M_{|\al|}, \quad  \al \in \N^d,~x \in X. 
	\end{equation}
	Since $f$ has a smooth extension to $\R^d$, $f$ defines a Whitney jet of class $\cC^\infty$ on $X$. 
	We claim that 
	\begin{equation} \label{eq:DCjet2}
		|(R^p_x f)^\al(y)| \le \frac{(d \ell(\si))^{p+1-|\al|}}{(p+1-|\al|)!}  
		\sup_{\substack{\xi \in \si\\ |\ga| = p+1}} |f^{(\ga)}(\xi)|    
	\end{equation}
	for any rectifiable path $\si$ which joins $x$ and $y$.
	Then, by \eqref{m-regular} and \eqref{eq:DCjet},
	there are constants 
	$C_i,\rh_i>0$ such that 
	\begin{align*}
		|(R^p_x f)^\al(y)| 
		&\le |(R^{m(p+1)-1}_x f)^\al(y)| + 
		\Big|\sum_{p - |\al| < |\be| < m(p+1) - |\al|} \frac{f^{(\al + \be)}(x)}{\be!} (y-x)^{\be}\Big|
		\\
		&\le d^{m(p+1)-|\al|} C \rh^{m(p+1)} |\al|!\, M_{m(p+1)}  \ell(\si)^{m(p+1) - |\al|}  
		\\
		&\quad + 
		 C_1 \rh_1^{m(p+1)} |\al|!\, M_{m(p+1)} |x-y|^{p-|\al|+1}
		 \\
		&\le C_2 \rh_2^{m(p+1)} |\al|!\, M_{m(p+1)}  
				  |x-y|^{p-|\al|+1},
	\end{align*}
	that is \eqref{eq:remainder}.
	To see \eqref{eq:DCjet2} notice that, with $T^p_x f(y) :=  \sum_{|\be| \le p} \frac{f^{(\be)}(x)}{\be!} (y-x)^{\be}$, 
	\[
		(R^p_x f)^\al(y) = f^{(\al)}(y) - T^{p-|\al|}_x f^{(\al)}(y)  = T^{p-|\al|}_y f^{(\al)}(y) - T^{p-|\al|}_x f^{(\al)}(y).
	\]
	By choosing a suitable parameterization, we may assume that
	$\si : [0,1] \to \R^d$ is an absolutely continuous curve from $x$ to $y$ such that $|\si'(t)| = \ell(\si)$ 
	for a.e.\ $t$. Then 
	\begin{align*}
	(R^p_x f)^\al(y) 
	&= \int_0^1 \p_t (T^{p-|\al|}_{\si(t)} f^{(\al)}(y))\, dt
	\\
	&= \int_{0}^{1}   \sum_{|\be| = p-|\al|} \frac{1}{\be!} \sum_{i=1}^d f^{(\al+\be+e_i)}(\si(t)) (y - \si(t))^\be \si_i'(t)\, dt
\end{align*}
By the Cauchy--Schwarz inequality,
\[
	|\sum_{i=1}^d f^{(\al+\be+e_i)}(\si(t)) (y - \si(t))^\be \si_i'(t)| 
	\le  |\nabla f^{(\al+\be)}(\si(t))| |\si'(t)| |y - \si(t)|^{|\be|}.  
\]	 	
Moreover, $|y - \si(t)| = |\si(1) - \si(t)| \le \ell(\si) (1-t)$. 
Thus 
\begin{align*}
	|(R^p_x f)^\al(y)| 
	&\le \sqrt d \sup_{\substack{\xi \in \si\\ |\ga| = p+1}} |f^{(\ga)}(\xi)| \, 
	\ell(\si)^{p+1-|\al|}
	 \int_{0}^{1}  \frac{(1-t)^{p-|\al|}}{(p-|\al|)!}  \, dt \sum_{|\be| = p-|\al|} \frac{|\be|}{\be!},
\end{align*}
that is \eqref{eq:DCjet2}.
\end{proof}

\begin{corollary}
	Let $M=(M_k)$ be a strongly regular weight sequence.
	For all $X \in \sH^1(\R^d)$ the functions in  
	$\cC^M(X)$ are precisely the functions which admit a $\cC^M$-extension to $\R^d$.
\end{corollary}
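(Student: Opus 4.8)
\emph{Proof plan.} The statement is an equivalence, and one implication is immediate: if $F\in\cC^M(\R^d)$, then $F|_X$ is smooth on $X$, and for every compact $K\subseteq X$ the estimates \eqref{DCcondition} for $F$ on a compact neighborhood of $K$ restrict to the same estimates for the derivatives of $F|_X$ on $K$, so $F|_X\in\cC^M(X)$. The substance is the converse: every $f\in\cC^M(X)$ extends to a function in $\cC^M(\R^d)$. The plan is to produce local $\cC^M$-extensions and glue them with a $\cC^M$-partition of unity.

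Fix $a\in X$. Since $X\in\sH^1(\R^d)$, Proposition \ref{prop:alpharegular} (with $\al=1$) supplies a path-connected compact neighborhood $K$ of $a$ in $X$ and a constant $D>0$ such that any two points of $K$ are joined inside $K$ by a polygon of length at most $D\,|x-y|$; thus $K$ satisfies hypothesis \eqref{m-regular} of Lemma \ref{lem:extension} with $m=1$. Moreover $f\in\cC^M(X)\subseteq\cC^\infty(X)$, and because $X$ satisfies condition (6) of Lemma \ref{lem:converse} (again by Proposition \ref{prop:alpharegular}), $f$ is the restriction of a $\cC^\infty$-function on $\R^d$; in particular the jet $(f^{(\al)})_{\al\in\N^d}$ — the continuous extensions of the interior derivatives — is a $\cC^\infty$ Whitney jet on $K$. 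This, together with \eqref{m-regular} and the $\cC^M$-bounds \eqref{eq:DCjet} for the $f^{(\al)}$, is exactly what the proof of Lemma \ref{lem:extension} uses; its hypothesis that the ambient set lie in $\sH(\R^d)$ or be subanalytic enters there only to guarantee a smooth extension, which we already have. Hence Lemma \ref{lem:extension} applies with $m=1$, so $N_k=M_{1\cdot k}=M_k$, and $(f^{(\al)})$ satisfies \eqref{eq:DCjetN} and \eqref{eq:remainder} on $K$ with $N=M$: it is a Whitney jet of class $\cC^M$ on $K$. Since $M$ is strongly regular, Bruna's $\cC^M$-version of the Whitney extension theorem \cite{Bruna80} realizes this jet by some $G_a\in\cC^M(\R^d)$; in particular $G_a=f$ on $K$, hence on a neighborhood of $a$ in $X$.

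To globalize, I would choose a locally finite cover $\{B_i\}_{i\in I}$ of $X$ by open balls on which the corresponding local extensions $G_{a_i}$ agree with $f$ on $B_i\cap X$, adjoin the open set $\R^d\setminus X$, and take a $\cC^M$-partition of unity $\{\psi_i\}$ subordinate to the resulting open cover of $\R^d$; such a partition exists because $M$, being strongly regular, is non-quasianalytic. Then $G:=\sum_{i\in I}\psi_i\,G_{a_i}$ lies in $\cC^M(\R^d)$, since $\cC^M$ is stable under products and locally finite sums, and for $x\in X$ only indices $i$ with $x\in B_i$ contribute, each with $G_{a_i}(x)=f(x)$, whence $G|_X=f$.

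There is no deep obstacle here: the work is already done in Lemma \ref{lem:extension} — whose loss of regularity $N_k=M_{mk}$ degenerates to no loss at all when $m=1$, i.e., for Lipschitz sets — and in Bruna's theorem. The only points needing care are the verification that $K$ furnishes a $\cC^\infty$ Whitney jet with the required $\cC^M$-bounds, so that Lemma \ref{lem:extension} is genuinely applicable to $K$ even though $K$ itself need not belong to $\sH(\R^d)$, and the passage from the local $\cC^M$-extensions to a global one via the $\cC^M$-partition of unity.
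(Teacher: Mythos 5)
Your proof is correct and follows essentially the same route as the paper, which likewise combines \Cref{lem:extension} with $m=1$ (quasiconvexity of Lipschitz sets via \Cref{prop:alpharegular}, so that $N=M$) with Bruna's $\cC^M$-version of the Whitney extension theorem. Your additional care about localizing to compact neighborhoods and regluing with a $\cC^M$-partition of unity is a reasonable elaboration of details the paper leaves implicit.
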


\begin{proof}
	This follows from \Cref{lem:extension} and the $\cC^M$-version of Whitney's extension theorem 
	\cite{Bruna80},
	since a bounded Lipschitz set is 
	quasiconvex, i.e., \eqref{m-regular} holds with $m=1$; 
	cf.\ \Cref{prop:alpharegular}.  
\end{proof}

\begin{corollary} 
	Let $M=(M_k)$ be a non-quasianalytic weight sequence of moderate growth 
	such that $M^a$ is non-quasianalytic for each $a>0$.
	Let $X \subseteq \R^d$ be a closed fat subanalytic subset. 
	Then the functions in $\widehat  \cC^{M}(X)$ are precisely the functions which admit a 
	$\widehat  \cC^{M}$-extension to $\R^d$. 
	If $X$ is simple, they are precisely the functions in $\widehat \cA^M(X)$. 
\end{corollary}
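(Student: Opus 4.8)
The plan is to deduce both equivalences from \Cref{lem:extension}, applied simultaneously to all the weight sequences $M^a$, $a>0$, together with a Whitney-type extension theorem, and to obtain the last assertion from \Cref{main:5}. One inclusion is immediate: if $F\in\widehat\cC^M(\R^d)$ and $F|_X=f$, then $F\in\cC^{M^a}(\R^d)$ for every $a>0$, so the estimates \eqref{DCcondition} for $M^a$ hold for $f=F|_X$ on every compact subset of $X$; hence $f\in\cC^{M^a}(X)$ for all $a$, i.e.\ $f\in\widehat\cC^M(X)$.

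For the converse, let $f\in\widehat\cC^M(X)$; in particular $f\in\cC^\infty(X)$. Since $X$ is fat closed subanalytic, \Cref{thm:subanreg} shows that every point of $X$ has a compact fat subanalytic neighborhood $K$ in $X$ satisfying the $m$-regularity hypothesis \eqref{m-regular} of \Cref{lem:extension} for some positive integer $m$. Fix such a $K$. For each $a>0$ we have $f|_K\in\cC^{M^a}(K)$, so \Cref{lem:extension} applied with the weight sequence $M^a$ shows that $f|_K$ is a Whitney jet of class $\cC^N$ with $N_k=(M^a)_{mk}=(M_{mk})^a$, that is, $N=(M^{(m)})^a$. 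As $M$ has moderate growth, $(M^{(m)})^a$ and $M^{ma}$ define the same Denjoy--Carleman class, so $f|_K$ is a Whitney jet of class $\cC^{M^{ma}}$; since $\{M^{ma}:a>0\}=\{M^b:b>0\}$, this means that the jet of $f$ on $K$ is of class $\widehat\cC^M$.

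It remains to extend this $\widehat\cC^M$-jet and to patch the local extensions. Here I would invoke the $\cC^M$ Whitney extension theorem (\cite{Bruna80}, or the extension results of \cite{ChaumatChollet99}) in the following form: applied to a Whitney jet of class $\cC^N$ on a compact set $K$, it produces, by a construction that is the same for all moderate-growth non-quasianalytic $N$, a function of class $\cC^{N^{(c)}}$ on $\R^d$ restricting to that jet, with loss $c$ depending only on $K$. The assumption that $M^a$ is non-quasianalytic for all $a>0$ guarantees that $\widehat\cC^M$ (which is an algebra, each $\cC^{M^a}$ being one) contains non-trivial compactly supported functions, so the cut-offs used in the extension, and later a partition of unity subordinate to a locally finite cover of $X$, can be taken in $\widehat\cC^M$; moderate growth gives $\cC^{(M^a)^{(c)}}=\cC^{M^{ca}}$ for every $a$. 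Thus the extension of the $\widehat\cC^M$-jet of $f$ on $K$ lies in $\bigcap_{a>0}\cC^{M^{ca}}(\R^d)=\widehat\cC^M(\R^d)$ and restricts to $f$ near the chosen point; patching by a $\widehat\cC^M$ partition of unity (and putting $0$ away from $X$) yields $F\in\widehat\cC^M(\R^d)$ with $F|_X=f$. Finally, if $X$ is simple then $\widehat\cA^M(X)=\widehat\cC^M(X)$ by \Cref{main:5}, which gives the last assertion.

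The main obstacle is the extension step. The geometric irregularity of $X$ has already been absorbed into the jet by \Cref{lem:extension}, since a $\cC^{M^a}$-function on $X$ yields an honest Whitney jet of class $\cC^{(M^{(m)})^a}$ on the compact piece $K$; so what is left is a Whitney extension from a compact set, but it has to be run simultaneously for the whole scale $\{M^a\}_{a>0}$. This forces the loss $c$ to be uniform in $a$ (ensured by moderate growth), requires cut-off functions lying in every class $\cC^{M^a}$ at once (ensured by non-quasianalyticity of all $M^a$), and requires a single extension operator, so that the resulting function lies in $\cC^{M^a}(\R^d)$ for all $a$ at the same time. Note that, because of the intersection, the loss $c$ disappears in the end: it only shifts the scale $\{M^a\}$ to $\{M^{ca}\}$, which is the same family.
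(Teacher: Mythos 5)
Your overall route coincides with the paper's: the easy inclusion, then \Cref{thm:subanreg} plus \Cref{lem:extension} applied to each $M^a$ to show that $f$ defines a Whitney jet of class $\widehat\cC^M$ (with the local loss $m$ absorbed by the scale, exactly as the paper notes), then a Whitney-type extension, and finally \Cref{main:5} for the last assertion. The jet-production step and the bookkeeping $\{(M^{(m)})^a\}\sim\{M^{ma}\}=\{M^b\}$ are carried out correctly and in more detail than the paper gives.

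The gap is in the extension step. You invoke ``the $\cC^M$ Whitney extension theorem (\cite{Bruna80}, or the extension results of \cite{ChaumatChollet99})'' in the form: a $\cC^N$-jet on a compact $K$ extends to a $\cC^{N^{(c)}}$-function with a loss $c$ depending only on $K$ and uniform over all moderate-growth non-quasianalytic $N$. Neither citation supports this. Bruna's theorem requires a \emph{strongly regular} weight sequence (strong non-quasianalyticity is not assumed here for the $M^a$, only non-quasianalyticity and moderate growth) and, where it applies, gives an extension in the \emph{same} class with no loss; for a merely non-quasianalytic class, Whitney extension can fail outright, so there is no single-class theorem to run ``uniformly in $a$.'' The whole point of passing to the intersection $\widehat\cC^M=\bigcap_{a>0}\cC^{M^a}$ is that extension becomes possible for the scale even though it may fail for each individual $\cC^{M^a}$; the paper therefore cites the extension theorem for intersection classes, \cite[Theorem 8]{ChaumatChollet98}, which takes a Whitney jet lying in all $\cC^{M^a}$ simultaneously and produces an extension in $\widehat\cC^M(\R^d)$ directly. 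Your heuristic description of why such a theorem should hold (uniform loss shifted away by the intersection, cut-offs in every class at once) is a reasonable account of what its proof must do, but as written you are assuming precisely the technical result on which the corollary rests. Replacing your asserted ``uniform-loss extension theorem'' by a citation of \cite[Theorem 8]{ChaumatChollet98} closes the gap; the partition-of-unity patching you describe is then unnecessary, since that theorem already handles the (locally finite, hence reducible to compact pieces) extension.
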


\begin{proof}
	This follows from \Cref{main:5}, \Cref{thm:subanreg}, and \Cref{lem:extension}. 
	Indeed, \Cref{lem:extension} implies that each $f \in \widehat \cC^{M}(X)$ defines a Whitney 
	jet of class $\widehat \cC^{M}$ on $X$ (the integer $m$ of \Cref{lem:extension} is local but it is 
	absorbed by $\widehat \cC^{M}$). 
	The extension theorem \cite[Theorem 8]{ChaumatChollet98} yields the required extension to $\R^d$.
\end{proof}

\subsection{Examples and counterexamples} \label{counterexamples}

The following examples complement the results and indicate their sharpness. 

\begin{example}[Infinitely flat fat cusps are not $\cA^\infty$-admissible]
	\label{flat}
Let $p : [0,\infty) \to [0,\infty)$ be a strictly increasing $\cC^\infty$-function which is infinitely flat at $0$. 
Consider the set $X := \{(x,y) \in \R^2 : x\ge 0,\, 0 \le y \le p(x)\}$ and the function $f : X \to \R$ defined by 
$f(x,y) = \sqrt{x^2 +y}$. Clearly, $f$ is $\cC^\infty$ in the interior of $X$ but $\p_y f$ does not extend continuously 
to the origin. 

On the other hand, $f \in \cA^\infty(X)$.
Let $x,y : \R \to \R$ be $\cC^\infty$-functions such that $(x(t),y(t)) \in X$ for all $t \in \R$. 
To see that $f \in \cA^\infty(X)$	
it suffices to prove 
that there is a $\cC^\infty$-function $z :\R \to \R$ such that $y= x^2 z$. 

We use the following result due to \cite[Theorem 7]{JorisPreissmann90}: 
{\it Let $\vh,\ps : \R \to \R$ be such that $\ps \in \cC^\infty$, $\vh \ps \in \cC^\infty$, and $|\vh| \le |\ps|^\al$ for some positive constant 
$\al$. Then $\vh \in \cC^{\lfloor 2\al \rfloor}$.}

We apply this result for $\ps = x^2$ and 
\[
	\vh = \begin{cases}
		y(t)/x(t)^2	& \text{ if } x(t) \ne 0,\\
		0 & \text{ if } x(t) = 0.
	\end{cases}
\]
The assumption $0 \le y \le p(x)$ implies that for each $n \in \N$ there is an interval $[0,\ep_n)$ such that 
for all $x \in [0,\ep_n)$ one has $y \le x^{2n+2}$. 
We may conclude that $\vh$ is $\cC^{2n}$ on the set $x^{-1}([0,\ep_n))$. Clearly, $\vh$ is $\cC^\infty$ on the set 
$\{t \in \R : x(t) \ne 0\}$. Thus $\vh$ is $\cC^\infty$ everywhere.
\end{example}

\begin{example}[Necessity of simpleness] \label{ex:simple}
	Let $X_1 = \{(x,y) \in \R^2 : x\ge 0, \, 0 \le y \le x\}$ and $X_2 = \{(x,y) \in \R^2 : 0 \le x \le y/2 \}$ 
	and set $X = X_1 \cup X_2$.
	The function $f$ on $X$ defined by $f(x,y) = x$ if $(x,y) \in X_1$ and $f(x,y) = y$ if $(x,y) \in X_2$ 
	belongs to $\cA^\infty(X)$ but clearly not to $\cC^\infty(X)$. 
	This follows from the fact that a $\cC^\infty$-curve $c(t)$ in $X$ must vanish of infinite order at each $t_0$ 
	with $c(t_0) \in X_1 \cap X_2 = \{0\}$. 
	Indeed, suppose that $c(t) = t^p \tilde c(t)$ with $(a,b) := \tilde c(0) \ne 0$ and $c(t) \in X_1$ if $t \le 0$ and  
	$c(t) \in X_2$ if $t \ge 0$. 
	If $p$ is even, it follows that $b \le a \le b/2$ which entails $a=b=0$, a contradiction.
	If $p$ is odd, we conclude that $0 \le a \le 0$, $b\le 0$, $a \le b/2$ hence $a=b=0$ again.   

	A modification of this example shows that the assumption that $X$ is simple cannot be replaced by the weaker 
	assumption that each $x \in X$ has a neighborhood $U$ such that $U \cap \interior(X)$ is connected: 
	Let $0<r<R$, consider $X := X_1 \cup X_2 \cup X_3$, where $X_3 = \{(x,y) \in \R^2 : x\ge0,\, y\ge0, x^2 + y^2 \ge R^2\}$, 
	and multiply $f$ with a smooth bump function which is $1$ on $B(0,r)$ and has support in $B(0,R)$. 
\end{example}

Nevertheless we have the following.

\begin{example} \label{ex:independent}
	Let $X_1 = \{(x,0) \in \R^2 : x\ge 0\}$ and $X_2 = \{(0,y) \in \R^2 : y \ge 0 \}$ 
	and set $X = X_1 \cup X_2$. Then $X$ is $\cA^\infty$-admissible.
	Indeed, let $f \in \cA^\infty(X)$.
	We may assume without loss of generality that $f(0,0) = 1$ (by multiplying with or adding a constant).
	Now $f|_{X_i}$ has a $\cC^\infty$-extension $F_i$ to $\R$ for $i=1,2$, 
	by \Cref{main:1}, and $F(x,y) := F_1(x)F_2(y)$ is a $\cC^\infty$-extension of $f$.
\end{example}

\begin{example}[There is no analogue for finite differentiability]
\label{ex:Glaeser}
This is an interesting consequence of \emph{Glaeser's inequality} \cite{Glaeser63R}: 
for $f : \R \to [0,\infty)$, 
\[
	f'(t)^2 \le 2 f(t) \|f''\|_{L^\infty(\R)}, \quad t \in \R. 
\] 
Indeed, consider the closed half-space $X = \{x \in \R^d : x_d \ge 0\}$ and the function $f : X \to \R$ 
given by $f(x) = x_d^{k+ 1/2}$. Then all partial derivatives of $f$ up to order $k$ extend continuously by $0$ 
to $\p X$, and the partial derivatives of order $k$ are $1/2$-H\"older continuous, but not better, near 
points of $\p X$.  
On the other hand, for each 
$\cC^{k,1}$-curve $c$ in $X$ with compact support, 
the composite $(f \o c)(t) = c_d(t)^{k+1/2}$ is $\cC^k$ with
\[
	(f\o c)^{(k)}(t) = C_k (c_d'(t))^k \sqrt{c_d(t)} +  D_k(c(t)),
\] 
where $t \mapsto D_k(c(t))$ is Lipschitz.
Since $\sqrt{c_d}$ is Lipschitz, by Glaeser's inequality, we conclude that $f \o c$ is of class $\cC^{k,1}$.
\end{example}

We want to add that the images of pseudo-immersions (which are not immersions) 
yield examples of sets $X \subseteq \R^d$ 
which are not $\cA^\infty$-admissible.

\begin{example}
	If $\on{gcd}(p,q) = 1$ then the map $\vh : \R \ni t \mapsto (t^p,t^q) \in \R^2$ is a pseudo-immersion, by 
	\cite{Joris82}, see also \cite{JorisPreissmann87}, \cite{JorisPreissmann90}, \cite{DuncanKrantzParks85}, and 
	\cite{AmemiyaMasuda89}. 
	Now the function $f(x,y) = y^{1/q}$ belongs to $\cA^\infty(\vh(\R))$ but has no smooth extension to $\R^2$.
\end{example}

The following example shows that there are closed fat sets $X \subseteq \R^d$ which 
satisfy 
\begin{equation} \label{notequal}
	\cA^\infty(X) = \big\{f : X \to \R : f \text{ satisfies \ref{lem:converse}(3)} \big\} \ne \cC^\infty(X).
\end{equation}

\begin{example} \label{ex:complementflatcusp}
	Let $X$ be the complement in $\R^2$ of the set $\{(x,y) \in \R^2 : x>0,\, |y| < e^{-1/x}\}$. 
	It is well-known (cf.\ \cite[Example 2.18]{Bierstone80a}) that
	there exist functions $f : X \to \R$ which satisfy \Cref{lem:converse}(3), but $f \not\in \cC^\infty(X)$. 
	 
	Let us show that for this $X$ the identity in \eqref{notequal} holds.
	To this end 
	let $h : \R \to \R$ be defined by $h (x) = 0$ if $x\le 0$ and 
	$h(x) = e^{-1/x}$ if $x >0$. Consider 
	\[
		X_\pm := \big\{(x,y) \in \R^2 : \pm y \ge h(x)\big\} \cup \big\{(x,y) \in \R^2 : x\le 0\big\}.
	\] 
	Then $X_\pm$ are $1$-sets and hence are $\cA^\infty$-admissible, 
	by \Cref{main:1}.
	
	Suppose $f \in \cA^\infty(X)$. Then $f$ is smooth on $\interior (X)$. 
	The restrictions $f|_{X_\pm}$ belong to $\cA^\infty(X_\pm)$, respectively.
	So all their derivatives extend to the boundary arcs $\{(x,y) \in \R^2 : x\ge0,\, \pm y = h(x)\}$,
	respectively.
	It remains to check that the extensions of the derivatives of $f|_{X_\pm}$ 
	coincide at the origin. 
	But this is clear, since they are uniquely determined by the restriction of $f$ to 
	$X_+ \cap X_-$.  

	For the converse suppose that $f : X \to \R$ satisfies \ref{lem:converse}(3). We have to show that  
	$f\o c$ is smooth for all smooth curves $c : \R \to X$. Since $X_\pm$ are $\cA^\infty$-admissible, this is 
	clear on the complement of $c^{-1}(0)$ in $\R$. Assume that $c(0) = 0$. 
	We claim that $f \o c$ is differentiable at $0$ and the chain rule $(f\o c)'(0) = f'(0)(c'(0))$ holds. 
	The set $X$ is star-shaped with respect to each point in $(-\infty,0]$.
	
	For each $v \in X$, the curve $\ga(t):= tv$ lies in $X$ for $0\le t\le 1$. 
	Moreover, $\ga_s(t) := \ga(t) + s^2 (-1 - \ga(t))$ lies in $X$ for $0\le t\le 1$ and $|s|\le 1$. 
	If $s \ne 0$, then $\ga_s(t) \in \interior(X)$ and hence
	\[
		\frac{f(\ga_s(t)) - f(\ga_s(0))}{t} = \int_0^1 (f\o \ga_s)'(tu)\, du = (1-s^2) \int_0^1 f'(\ga_s(tu))(v)\, du.
	\]  
	Letting $s \to 0$ and using \ref{lem:converse}(3), we get
	\[
		\frac{f(\ga(t)) - f(\ga(0))}{t} = \int_0^1 f'(\ga(tu))(v)\, du.
	\]
	This tends to $f'(\ga(0))(v)$ as $t \to 0$.

	Now for $0\le s \le 1$ and $t \in \R$ we have $s\cdot c(t) \in X$.  
	We may apply the last paragraph for $v = c(t)/t$ and obtain 
	\[
		\frac{f(c(t)) - f(0)}{t} = \int_0^1 f'(u c(t))\Big(\frac{c(t)}{t}\Big)\, du,
	\]
	which tends to $f'(0)(c'(0))$, since $f'(u c(t)) \to f'(0)$ uniformly on the bounded set $\{c(t)/t : t \text{ near }0\}$. 
	This proves the claim.

	By iteration we may conclude that $f \o c$ is smooth; cf.\ the proof of \cite[Theorem 24.5]{KM97}.
\end{example}



\def\cprime{$'$}
\providecommand{\bysame}{\leavevmode\hbox to3em{\hrulefill}\thinspace}
\providecommand{\MR}{\relax\ifhmode\unskip\space\fi MR }
\providecommand{\MRhref}[2]{%
  \href{http://www.ams.org/mathscinet-getitem?mr=#1}{#2}
}
\providecommand{\href}[2]{#2}

\end{document}